\newtheorem{theorem}{Theorem}
\newtheorem{lemma}[theorem]{Lemma}
\newtheorem{corollary}[theorem]{Corollary}
\theoremstyle{definition}
\newtheorem{definition}{Definition}
\newtheorem{remark}{Remark}
\title{Coins that Change Their Weights}
\author{Tanya Khovanova \and Konstantin Knop}
\date{}
\begin{document}

\maketitle

\begin{abstract}
As in many coin puzzles, we have several identical-looking coins, with one of them fake and the rest real. The real coins weigh the same. Our fake coin is special in that it can change its weight. The coin can pretend to be a real coin, a fake coin that is lighter than a real one, and a fake coin that is heavier than a real one. In addition to this, each time the coin is on the scale, it changes its weight in a predetermined fashion.

In this paper, we seek to find our fake coin using a balance scale and the smallest number of weighings.

We consider different possibilities for the fake coin. We discuss coins that change weight between two states or between three  states. The 2-state coin that changes weight from lighter to real and back has been studied before, so we concentrate on the 2-state coin that changes weight from lighter to heavier, and back. We also study the 3-state coin, which changes its weight from lighter to heavier to real, and back to lighter.

Given the total number of coins and the starting state of the fake coin, we calculate the smallest number of weighings needed to identify the fake coin. We provide an oblivious optimal strategy for this number of weighings. We also discuss what happens if the starting state is not known or mixed. In such cases, adaptive strategies are often more powerful than oblivious ones.
\end{abstract}

\section{Introduction}

Coin puzzles have long been a source of fascination for mathematicians. The simplest coin puzzle is formulated like this:

\begin{quote}
You are given $N$ coins that look identical, but one of them is fake and is lighter than the other coins. All real coins weigh the same. You have a balance scale that you can use to find the fake coin. What is the smallest number of weighings that guarantees your finding the fake coin?
\end{quote}

The above puzzle first appeared in 1945. Since then, there have been many generalizations of this puzzle \cite{GN}. A new generalization that allows a coin to change its weight appeared in 2015 \cite{KKP}. This generalization introduces a new type of coin, called a \textit{chameleon coin}, which can mimic a real coin or a fake coin that is lighter than a real coin. The chameleon coin has a mind of its own and can choose how to behave at any weighing. It is impossible to find chameleon coins among real coins, as the chameleons can pretend to be real all the time. An interesting question to ask would be the following: given that a mix of $N$ identical coins contains one chameleon and one classical fake coin that is lighter than a real coin, find two coins one of which is guaranteed to be the classical fake \cite{KKP}. 

The chameleon coins were further generalized in \cite{STEPAlternators}, where the \textit{alternator coins} were introduced. The alternator can mimic a real coin or a fake coin that is lighter than a real coin, but, contrary to the case of the chameleon coins, there is a deterministic rule. The alternator switches the behavior each time it is on the scale. Unlike the chameleon, the alternator coin can always be found.

In this paper we take the alternators one step further. We allow our fake coins to be heavier than real coins. We divide coins that change weight into 2-state and 3-state coins. The alternator is a 2-state coin. In this paper we also call it the LH-coin. In addition, we introduce another 2-state coin: a coin that can switch between being lighter or heavier than a real one. We call it the LR-coin.

The 3-state coins can have three states: they can pretend to be a real coin, a fake coin that is lighter than a real one, or a fake coin that is heavier than a real one. Each time it is put on a balance scale, it switches its state. The states are switched periodically.

We also study separate cases, in which we either know or do not know the starting state of the fake coin. For example, if the fake coin starts as being lighter than the real coin, and the first weighing unbalances, we know that the fake coin is on the lighter pan. If we do not know the starting state of the fake coin and the weighing unbalances, then the coin might be in the light state on the lighter pan or in the heavy state on the heavier pan. We call the resulting situation the mixed state: every coin, if it were to be found out as the fake one, is assigned a starting state. The mixed state helps us analyze the unknown starting state.

We commence with general definitions and statements in Section~\ref{sec:defs}. 

In Section~\ref{sec:lh} we study the LH-coin: the 2-state coin that changes its state between light and heavy. We find that in $w$ weighings we can process up to $3^w$ coins. We also show an oblivious strategy that can achieve this. A mixed starting state introduces an additional layer of complexity. For most distributions of the mixture of states, we can also process $3^w$ coins in the oblivious strategy. If the starting state is unknown, we can process up to $(3^w-1)/2$ coins, using an oblivious strategy to do so.

In Section~\ref{sec:alternator} we discuss the alternator, or the LR-coin: the 2-state coin that changes its state from light to real, and back. We provide an optimal oblivious strategy for a case in which the starting state of the LR-coin is known. The strategy allows for the processing of up to $J_{w+2}$ coins in $w$ weighing, where $J_n$ is the $n$-th Jacobsthal number. We also discuss the mixed state for which we can also present an oblivious strategy processing $J_{w+2}$ under certain constraints for the states. When the starting state is unknown, we can process up to $J_{w+1}$ coins in an adaptive strategy. We explain why the oblivious strategies that are as good as adaptive strategies do not exist for large $w$ when the starting state is not known. 

Next, we move to the LHR-coin: the 3-state coin that changes its weight from lighter to heavier to real, and back to lighter. The case of the 3-state coin is more complex, and so it is covered in three section. In Section~\ref{sec:3stateknown} we discuss the LHR-coin, starting in the known state. We calculate the sequence that represents the maximum number of coins that can be processed in $w$ weighings. We provide an optimal oblivious strategy for these coins. In Section~\ref{sec:3statemixed} we study the mixed state. As in previous sections, the same bound works for the number of coins in the mixed state, with some additional constraints. The mixed state allows us to find the exact bound of how many coins in the unknown state can be processed in $w$ weighings, which is done in Section~\ref{sec:3stateunknown}. We explain why the oblivious strategies that are as good as adaptive strategies do not exist for large $w$ when the starting state is not known.

\section{Definitions and General Statements}\label{sec:defs}

Let us introduce the \text{states} of the weight-changing coin while it is not on the scale. 

\begin{itemize}
\item The weight-changing coin is in the \textit{light} state if the next time it is on the scale it behaves as a fake coin that is lighter than a real coin. 
\item The weight-changing coin is in the \textit{heavy} state if the next time it is on the scale it behaves as a fake coin that is heavier than a real coin. 
\item The weight-changing coin is in the \textit{real} state if the next time it is on the scale it behaves as a real coin. 
\end{itemize}

The coins we are interested in are changing their states deterministically.

We will discuss the following types of coins:

\begin{itemize}
\item \textit{light-heavy or LH-coins} that alternate their states between light and heavy.
\item \textit{light-real or LR-coins} that alternate their states between light and real.
\item \textit{light-heavy-real or LHR-coins} that change their states from light to heavy to real, and then cycle.
\end{itemize}

Note that light-real coins were studied in \cite{STEPAlternators} and were called alternators there. Also, we do not discuss the heavy-real and heavy-light-real cases, as they can be resolved from the above cases by invoking symmetry arguments.

We also divide our research into cases on the basis of what is known of the coin's starting state. The starting state might be:

\begin{itemize}
\item \textit{known}, when we know the starting state of the coin; 
\item \textit{unknown}, when we do not know the starting state of the coin;
\item \textit{mixed}, when each coin is assigned a state, so that, should it later be found to be fake, the starting state would match the assigned state.
\end{itemize}

The mixed state might seem unusual, but it appears naturally in the study of the unknown state. Suppose some coins in the unknown state are put on the scale, and the scale unbalances. That means the fake coin can be on the lighter pan in the light state, or on the heavier pan in the heavy state. In other words, the coins that are on the scale are in the mixed state after the weighing.

During weighing strategies, some coins will be proved to be real, i.e. not fake. We call such coins \textit{genuine}, so as not to confuse them with coins in the real state.

At this point we should review with the reader the standard approach to puzzles involving fake coins and a balance scale. The balance scale has two pans; the same number of coins is put on each pan to be weighed. The output of one weighing can be described as being one of three types:

\begin{itemize}
\item ``$=$''---when the pans are equal in weight,
\item``$<$''---when the left pan is lighter,
\item ``$>$''---when the right pan is lighter.
\end{itemize}

Suppose there is a strategy that finds a fake coin in $w$ weighings. Suppose coin number $i$ is fake. Then there is a sequence of weighings after which we determine that the $i$-th coin is indeed the fake coin. We can represent the sequence of weighings that results in our conclusion that the $i$-th coin is fake as a string of three symbols: $=$, $<$, and $>$. Obviously, two coins cannot have the same string pointing to them. That means that the number of coins that can be processed in $w$ weighings is not more than $3^w$.

We call each string in the alphabet $=$, $<$, and $>$ \textit{an outcome}, as the string is a particular result of a weighing strategy. We call symbols $<$ and $>$ the \textit{unbalanced} symbols, or \textit{imbalances}.

\begin{definition}
Given an outcome $x$, \textit{the conjugate outcome}, denoted by $\bar{x}$, is the unique outcome in which all $>$'s are replaced by $<$'s, and all $<$'s are replaced by $>$'s.
\end{definition}

Note that this conjugation is an involution, as $\bar{\bar{x}} = x$. In addition, the only self-conjugate outcome of a given length is a string that consists exclusively of $=$ symbols.

Here we present our first information-theoretical bound on the number of coins that can be processed in $w$ weighings. The total number of possible outcomes for the given coin type is denoted by $T(w)$, and $S$ denotes the number of possible states.

\begin{theorem}\label{thm:itb}
If the coin starts in the known or mixed state, then the number of coins that can be processed in $w$ weighings is not more than $T(w)$. If the coin starts in the unknown state, this number is not more than $(T+S-1)/S$.
\end{theorem}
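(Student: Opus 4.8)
The plan is to argue via the same outcome-string injection used in the preamble, but now tracking which outcomes are actually *usable* for a given coin type and starting regime. First I would set up the basic map: a strategy that succeeds in $w$ weighings assigns to each coin a distinct outcome in $\{=,<,>\}^w$; distinctness is forced because if two coins produced the same outcome string the strategy could not distinguish them. The refinement is that not every string of length $w$ need be achievable — for a given weight-changing coin type, only $T(w)$ of them can ever occur as the outcome when that coin is the fake one (this is the definition of $T(w)$, which I am taking as given / part of the setup). Hence in the known or mixed case the number of coins is at most $T(w)$. This direction is essentially bookkeeping.

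The substantive part is the unknown-state bound $(T+S-1)/S$. Here the key observation is that a single coin, depending on which of its $S$ starting states it is in, can generate up to $S$ different outcome strings, and — crucially — in the unknown-state setting the strategist cannot tell these apart, so all $S$ of them must point to the *same physical coin*. I would make this precise by considering, for each coin $c$ placed in the experiment, the set $O(c)$ of outcome strings that arise as we let $c$ be the fake coin ranging over all $S$ possible starting states. The correctness requirement is that if $c \neq c'$ then $O(c) \cap O(c') = \emptyset$: otherwise some outcome string is consistent both with "$c$ is fake (in some state)" and "$c'$ is fake (in some state)", and the strategy fails to identify the fake coin. So the sets $O(c)$ are pairwise disjoint subsets of the $T(w)$ achievable outcomes, and each has size between $1$ and $S$.

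To extract the clean bound I would look at which outcome strings can be "shared," i.e. have $|O(c)| < S$. The mechanism for sharing is the conjugation involution from the Definition: when two starting states of the same coin are related by a light$\leftrightarrow$heavy swap pattern, the corresponding outcome strings are conjugates of each other. The only self-conjugate outcome is the all-$=$ string, and that is the one string that can appear for at most one state-configuration "for free" (the fake coin never tips any pan, so its starting state is invisible and irrelevant). So at most one of the sets $O(c)$ — the one containing the all-equal outcome — can have size smaller than $S$; in fact it can be as small as $1$. If there are $n$ coins total, then $\sum_c |O(c)| \le T$, with at least $n-1$ of the terms equal to $S$ and the remaining term at least $1$, giving $S(n-1) + 1 \le T$, i.e. $n \le (T - 1)/S + 1 = (T + S - 1)/S$.

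The main obstacle I anticipate is justifying rigorously that distinct starting states of a single fixed fake coin really do produce $S$ *distinct* outcome strings under any reasonable strategy — a priori two different starting states might yield the same outcome string (think of a coin that spends the whole strategy off the scale, or a strategy that weighs it an unfortunate number of times). The resolution is that we only need an *upper* bound on $n$, so we want a *lower* bound on $\sum_c |O(c)|$ in the worst case over strategies; one has to phrase the argument adversarially: for any strategy, the adversary picks the state realization, and the accounting must hold for the state-assignments that are genuinely ambiguous. I would handle this by noting that the all-$=$ outcome is the unique outcome whose conjugate is itself, so it is the unique outcome that can be reached by a coin in a way that collapses states; every other reachable outcome $x$ has $x \neq \bar x$, and $x$ and $\bar x$ cannot both be assigned to distinct coins, which forces the pairing structure above. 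Pinning down this pairing — that the $S$ states of one coin occupy a full conjugation-orbit of size $S$ among the outcomes except possibly at the fixed point — is where the care is needed, and it is exactly where the factor $S$ and the additive correction $S-1$ come from.
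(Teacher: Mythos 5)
Your overall accounting is exactly the paper's: distinct outcomes must point to distinct coins, which gives the bound $T(w)$ in the known and mixed cases; in the unknown case each coin is charged $S$ outcomes, except for at most one coin (the one realizing the all-$=$ outcome) which may be charged as few as one, so $S(N-1)+1\le T$ and $N\le (T+S-1)/S$. The first part and the final arithmetic are fine and match the paper.

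The one genuine flaw is the mechanism you invoke for the key step, namely that a coin that is ever weighed contributes $S$ \emph{distinct} outcomes, one per starting state. You attribute this to the conjugation involution, claiming that different starting states of the same coin yield conjugate outcomes and hence that the $S$ states of one coin occupy a full conjugation orbit. That is not what happens: conjugation relates a coin to a \emph{different} coin traveling on the opposite pans, not to the same coin in a different state. Concretely, an LR-coin with itinerary LL produces $<=$ in the light state and $=<$ in the real state, which are not conjugates; and for $S=3$ a conjugation orbit has size at most $2$, so it could never account for three states. The correct justification --- and the one the paper uses --- is that every outcome other than the all-$=$ string contains an imbalance; since an identified fake coin's pan at that weighing is known, the imbalance reveals the coin's state at that moment, and the deterministic state-change rule then determines the starting state. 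Hence every non-self-conjugate... rather, every outcome containing an imbalance determines the pair (coin, starting state) uniquely, only the all-$=$ outcome can be shared by several states of a single coin, and the count $S(N-1)+1\le T$ follows. With that substitution your argument is the paper's proof; as written, the justification of the pivotal disjointness-and-size claim does not hold up.
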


\begin{proof}
The first part follows from the fact that different outcomes correspond to different coins. If the starting state is unknown, then the only self-conjugate outcome might point to a coin that is never on the scale. We will find this coin, but we will not know its state. All other outcomes contain imbalances. As a result, when we find the fake coin, we also find its starting state. Thus, each coin is defined by $S$ different outcomes depending on its state. Therefore, if $N$ is the number of coins, the number of different outcomes must be at least $(N-1)S + 1$. The theorem follows.
\end{proof}

During any weighing, a coin's presence on the \textbf{l}eft pan is denoted by L, a coin's presence on the \textbf{r}ight pan is denoted by R, and a coin not participating (one that is left \textbf{o}utside of the weighing) is denoted by O. We call letters L and R \textit{on-scale letters}.

After all the weighings, every coin's path can be described as a string of L's, R's, and O's.

\begin{definition}
The string of L's, R's, and O's corresponding to the location of a given coin in every weighing is called the coin's \textit{itinerary}.
\end{definition}

Given an itinerary $\delta$, we denote the set of all coins with this itinerary as $\delta$, and the size of this set as $|\delta|$. We will introduce an involutive operation on itineraries, called conjugation:

\begin{definition}
Given an itinerary $\delta$, \textit{the conjugate itinerary}, denoted by $\bar{\delta}$, is the unique itinerary in which all R's are replaced by L's, and all L's replaced by R's.
\end{definition}

Note that this conjugation is an involution, as $\bar{\bar{\delta}} = \delta$. In addition, the only self-conjugate itinerary of a given length is a string of O's. After the weighings, we can partition the corpus of coins into groups by their itineraries. Given a strategy that finds a fake coin, the itinerary of each coin is uniquely defined. 

Scholars study weighing strategies of two types: \textit{adaptive} strategies in which each weighing could depend on the results of all previous weighings, and \textit{oblivious} (or non-adaptive) strategies, in which all the weighings must be specified in advance.

If we have an adaptive strategy that finds a particular fake coin, then the coin itineraries do not have to be unique. For example, if the first weighing unbalances the scale, then the coins that are not on the scale are guaranteed to be real, and we might not need to use them in the following weighings. These coins might end up with the same itinerary. With an adaptive strategy, a coin may have different itineraries depending on the outcome of the weighings. Still, each coin has a special itinerary---the itinerary of the strategy that finds this particular coin. We call this itinerary the \textit{self-itinerary}.

The oblivious strategy is different from an adaptive one. In an oblivious strategy, the itinerary of every coin is predetermined: The self-itinerary is the only itinerary. 

\begin{lemma}
Distinct coins have distinct self-itineraries. 
\end{lemma}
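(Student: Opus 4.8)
The plan is to argue by contradiction: suppose two distinct coins, say coin $i$ and coin $j$, have the same self-itinerary $\delta$. The self-itinerary of coin $i$ is the record of pan-locations in the run of the strategy that correctly identifies coin $i$ as fake, and similarly for coin $j$. The key observation is that if both coins follow the same itinerary $\delta$, then in every weighing they are on the same pan (or both off the scale). I would then track the sequence of weighing outcomes $=,<,>$ that the strategy actually observes. Because the two coins always travel together, the outcome string produced when $i$ is the fake coin must coincide with the outcome string produced when $j$ is the fake coin: at each step the two runs have made identical decisions so far (the strategy is deterministic and has seen the same partial outcome history), hence it prescribes the same weighing; and that weighing produces the same imbalance symbol whether the single fake coin present is $i$ or $j$, since both sit on the same pan and contribute the same direction of tilt — or no coin in that weighing is fake at all, giving $=$ in both runs.

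Carrying this out, I would induct on the weighing index $k$. The inductive hypothesis is that after $k-1$ weighings the two runs (one with $i$ fake, one with $j$ fake) have produced the same outcome prefix and hence the strategy specifies the same $k$-th weighing in both runs, and moreover the coins' states are synchronized — this last point matters because the fake coin changes weight according to how many times it has been on the scale, and since $i$ and $j$ have the same itinerary prefix they have been weighed the same number of times, so they are in the same state entering weighing $k$. Then weighing $k$ has the fake coin on a definite pan in a definite state (light, heavy, or real) in both runs, or off the scale in both runs, so the outcome symbol is the same, closing the induction. At the end, both runs produce the identical outcome string, so the strategy reaches the same conclusion in both — it cannot output both ``$i$ is fake'' and ``$j$ is fake.'' This contradicts correctness, so $\delta$ cannot be a shared self-itinerary.

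The main obstacle, and the place I would be most careful, is the synchronization of the coin's internal state across the two runs: the deterministic weight-changing rule depends on the count of prior weighings involving the coin, and one must check that equal itinerary prefixes force equal weighing-counts and hence equal states, so that the fake coin tips the scale the same way in both scenarios. A secondary subtlety is handling the case where the $k$-th weighing does not involve the fake coin (itinerary symbol O): there the outcome is forced to be $=$ regardless of which of $i,j$ is fake, which is in fact the easy case. One should also note explicitly that this lemma is stated for a strategy that does find the fake coin (adaptive or oblivious); for oblivious strategies it is immediate since there is only one itinerary per coin and distinct coins must then be separated by distinct itineraries, but the argument above covers the adaptive case uniformly.
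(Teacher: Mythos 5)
Your proof is correct and follows essentially the same route as the paper's: the oblivious case is immediate, and the adaptive case is the same induction showing that the two runs (with coin $i$ or coin $j$ fake) stay synchronized — same piles, same outcomes, hence the same final answer. Your explicit check that equal itinerary prefixes force equal on-scale counts and hence equal states of the weight-changing coin is a point the paper glosses over, but it does not change the argument.
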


\begin{proof}
If the strategy is oblivious, and two coins have the same itineraries, then they are always together in the same pile, in all the weighings. If the fake coin is one of them, we cannot identify it.

Suppose two coins have the same self-itineraries in an adaptive strategy, and one of them is fake. Then, in the first weighing, these two coins have to be in the same pile (left pan, right pan, outside). The result of the weighing is the same whether the first or the second coin is fake. The second weighing is uniquely defined by the results of the first weighing; therefore, both coins follow the same strategy in the second weighing. They will be in the same pile again, and so on. After all the weighings, both coins will always be together, and we cannot say which of them is fake.
\end{proof}

The self-itinerary should match the outcome corresponding to the coin. If the coin is ever on the scale according to its self-itinerary, then, if this coin is fake, once we find it, we will also know the state of this coin during every weighing. This knowledge means that the outcome is uniquely determined per this self-itinerary. Also, if the coin is on the scale in the unbalanced state, the outcome of this particular weighing must match the state.

If the starting state is unknown, in an oblivious strategy a coin has the itinerary that corresponds to $S$ different states. In an adaptive strategy, the first weighing is defined uniquely; that means the same coin in different starting states has the same first letter in its self-itinerary.

\subsection{Oblivious strategies}

Let us consider an oblivious strategy, in which every coin by definition has its predetermined itinerary. Now, from this set of itineraries, we generate a weighing strategy. In weighing number $i$ we consider the $i$-th letter in every itinerary string. If this letter is L, the corresponding coin is put on the left pan. If this letter is R, the corresponding coin is put on the right pan. If this letter is O, the corresponding coin is not put on the scale at all. We also denote via $\textup{Left}_i$, $\textup{Right}_i$ and $\textup{Out}_i$ the sets of coins that are on the left pan, right pan, and outside, in the $i$-th weighing.

The following statement is standard. It explains when we can produce a legitimate weighing strategy from the set of itineraries for the coins given.

\begin{lemma}
The set of itineraries can generate a legitimate strategy if and only if $|\textup{Left}_i|=|\textup{Right}_i|$, for every index $i$.
\end{lemma}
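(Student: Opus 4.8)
The plan is to prove both directions of the biconditional directly from the definition of a legitimate weighing, namely that a weighing is only meaningful on a balance scale when the two pans hold the same number of coins.

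\medskip

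\noindent\textbf{Forward direction.} First I would assume the set of itineraries generates a legitimate strategy. By construction, in weighing number $i$ the coins placed on the left pan are exactly those whose $i$-th itinerary letter is L, i.e.\ the set $\textup{Left}_i$, and similarly $\textup{Right}_i$ is the set of coins with $i$-th letter R. A single use of a balance scale requires an equal number of coins on each pan, since otherwise the outcome carries no information about which coin is fake (the pan with more coins is heavier by default). Hence $|\textup{Left}_i| = |\textup{Right}_i|$ for every $i$.

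\medskip

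\noindent\textbf{Reverse direction.} Conversely, suppose $|\textup{Left}_i| = |\textup{Right}_i|$ for every index $i$. Then I would simply exhibit the strategy: in weighing $i$, place the coins of $\textup{Left}_i$ on the left pan and the coins of $\textup{Right}_i$ on the right pan, leaving $\textup{Out}_i$ aside. Since the pans are balanced in count, each weighing is a valid balance-scale operation, so the sequence of $w$ weighings is a legitimate (oblivious) strategy realizing the prescribed itineraries.

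\medskip

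\noindent\textbf{Main obstacle.} There is no real obstacle here; the statement is labeled ``standard'' precisely because it is essentially a restatement of the rules of the balance scale. The only point requiring a little care is making sure the correspondence between the set of itineraries and the sets $\textup{Left}_i$, $\textup{Right}_i$, $\textup{Out}_i$ is the one already fixed in the preceding paragraph, so that ``generating a strategy from itineraries'' and ``having balanced pan-counts'' are literally describing the same object from two sides. Once that identification is made explicit, both implications are immediate.
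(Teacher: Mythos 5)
Your proof is correct and follows essentially the same reasoning as the paper, which simply observes that the condition guarantees equal pan counts at each weighing; you merely spell out both directions of the biconditional explicitly where the paper compresses them into one sentence.
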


\begin{proof}
The condition guarantees that at each weighing the number of coins that are put on each pan is the same. 
\end{proof}

\begin{corollary}\label{cor:scstr}
If the set of itineraries is self-conjugate, then it corresponds to a legitimate strategy.
\end{corollary}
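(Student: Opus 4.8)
The statement to prove is Corollary~\ref{cor:scstr}: a self-conjugate set of itineraries yields a legitimate strategy. Given the preceding Lemma (a set of itineraries generates a legitimate strategy iff $|\textup{Left}_i| = |\textup{Right}_i|$ for every $i$), the plan is simply to verify this cardinality condition using the self-conjugacy hypothesis.

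The plan is to argue as follows. Fix a weighing index $i$. Conjugation of itineraries, $\delta \mapsto \bar\delta$, swaps every L with R and every R with L in position $i$ (and everywhere else), while leaving O's fixed. Since the set of itineraries is self-conjugate, the map $\delta \mapsto \bar\delta$ is a bijection of the set onto itself. Restricting attention to position $i$: a coin lies in $\textup{Left}_i$ exactly when its itinerary has L in position $i$, which happens exactly when its conjugate has R in position $i$, i.e. when the conjugate lies in $\textup{Right}_i$. Thus conjugation restricts to a bijection between $\textup{Left}_i$ and $\textup{Right}_i$, so $|\textup{Left}_i| = |\textup{Right}_i|$. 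One should be slightly careful that distinct coins with the same itinerary are handled correctly — but since the set of itineraries is self-conjugate as a \emph{multiset} of coin-itineraries (each coin carries its own itinerary), the bijection at the level of coins is exactly conjugation of the underlying itinerary strings, and it pairs up Left-coins with Right-coins at weighing $i$. Since this holds for every $i$, the previous Lemma applies and the strategy is legitimate.

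I do not expect any serious obstacle here; the only point requiring mild care is making precise the sense in which ``the set of itineraries is self-conjugate'' — whether one thinks of itineraries as strings or as labels attached to individual coins — and ensuring the bijection is between the coin sets $\textup{Left}_i$ and $\textup{Right}_i$ rather than merely between sets of abstract strings. Once that bookkeeping is in place, the argument is a one-line consequence of the involution property of conjugation together with the characterization Lemma. This is why the statement is phrased as a corollary.
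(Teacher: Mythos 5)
Your argument is correct and is exactly the reasoning the paper intends: conjugation is an involution on a self-conjugate set of itineraries that swaps L and R in each position, so it pairs $\textup{Left}_i$ with $\textup{Right}_i$ and the preceding lemma applies. The paper leaves this as an immediate corollary without writing out the proof, so your write-up simply makes the implicit argument explicit.
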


In finding an oblivious strategy, our approach is as follows. We find the condition of the weighing strategy that limits what kind of outcome strings in the alphabet $=$, $<$, and $>$ can lead to finding the fake coin. We assign the outcome strings to the coins. Then we build itineraries for the given coins, which correspond to their outcomes. After that, we prove that the itineraries describe an oblivious weighing strategy that works.

\section{A Light-Heavy Coin}\label{sec:lh}

\subsection{Starting state is known}\label{sec:lhknown}

As we have mentioned above, the total number of coins is $N$, and we have one fake coin of type light-heavy that we would like to find. Without loss of generality, we can assume that the starting state of the coin is light.

We will show that if $3^{w-1} < N \leq 3^w$, then the optimal strategy finds the fake coin in $w$ weighings. We cannot do better than that, as the number of possible strings of length $w$ in the alphabet $=$, $<$, and $>$ is not more than $3^w$. 

Now we assign the outcomes to our coins. If the number of coins is odd, we pick one of the coins and assign the self-conjugate outcome to it. For an even number of coins, we assign outcomes to coins in conjugate pairs. 

Next we want to translate the outcomes to itineraries. An outcome uniquely defines the itinerary of the fake coin that corresponds to this outcome. This is true because, in the case of the light-heavy coin, each weighing tells us exactly in which of the three piles the fake coin is.

We call an odd(even) occurrence of an imbalance an \textit{odd(even) imbalance}. For example, in the outcome $=<>=>$, the second and fifth symbols are odd imbalances, while the third symbol is an even imbalance. This is how we translate outcomes to itineraries.

\begin{itemize}
\item An odd imbalance: $<$ is replaced by L and $>$ is replaced by R.
\item An even imbalance: $<$ is replaced by R and $>$ is replaced by L.
\item The equality sign is replaced by O.
\end{itemize}

A set of itineraries provides us with an oblivious weighing strategy.

\begin{theorem}
If $3^{w-1} < N \leq 3^w$, then there exists an oblivious optimal strategy that finds the fake coin in $w$ weighings. 
\end{theorem}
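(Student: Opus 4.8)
The plan is to follow exactly the recipe set up just before the statement: fix a self-conjugate set of $N$ outcomes of length $w$, push it through the stated translation to obtain a set of itineraries, invoke Corollary~\ref{cor:scstr} to see these itineraries define a legitimate oblivious strategy, and finally check that running this strategy on the coin set reproduces, for each possible position of the fake coin, exactly the outcome that was assigned to it.

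First I would choose the outcomes. Among the $3^w$ outcomes of length $w$, exactly one, $=\cdots=$, is self-conjugate, and the remaining $3^w-1$ split into $(3^w-1)/2$ conjugate pairs. If $N$ is odd, I assign $=\cdots=$ to one coin and $(N-1)/2$ conjugate pairs to the others; if $N$ is even, I assign $N/2$ conjugate pairs and avoid $=\cdots=$. This is possible precisely because $N\le 3^w$, and it yields a self-conjugate set $X$ of $N$ distinct outcomes. Applying the translation rule to each outcome produces an itinerary. This translation is invertible (scanning an itinerary left to right, the O's recover the $=$'s, and each on-scale letter is decoded from the parity of its position among the on-scale letters), so distinct outcomes give distinct itineraries, as they must by the lemma on self-itineraries. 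Moreover, conjugating an outcome swaps every $<$ with $>$, and each clause of the translation rule is equivariant under this swap (odd imbalance: $L\leftrightarrow R$; even imbalance: $L\leftrightarrow R$; equality $\mapsto$ O, fixed), so the itinerary of $\bar x$ is the conjugate of the itinerary of $x$. Hence the itinerary set is self-conjugate, and Corollary~\ref{cor:scstr} gives a legitimate oblivious strategy.

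The substantive step is correctness: I would show that if the fake coin is the one carrying outcome $x\in X$ with itinerary $\delta$, then the string of weighing results produced by the strategy is exactly $x$. At a weighing where $\delta$ has an O, the fake coin sits aside and all coins on the scale are genuine, so the pans balance and the result is $=$, matching $x$ there (since O came from $=$). At a weighing where $\delta$ has an on-scale letter, the fake coin is on the scale; because an LH-coin is never in the real state, it always tips the pan it occupies, so that weighing is an imbalance. Therefore the imbalances of the result string occur at exactly the on-scale positions of $\delta$, and the $k$-th imbalance is the $k$-th time the fake coin is weighed. Since it starts light and flips state each time it is weighed, it is light at its odd appearances and heavy at its even appearances; feeding this into the two imbalance clauses of the translation rule shows in each case that the pan that goes down is precisely the one for which $x$ prescribes the symbol at that position (for instance, at an odd appearance $<$ in $x$ placed the coin on the left pan, and a light coin on the left pan makes the left pan lighter, giving $<$; the even case works identically with heavy/light and $L$/$R$ interchanged as the rule dictates). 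Thus the result string equals $x$, and since distinct coins carry distinct outcomes, the fake coin is identified. Optimality is immediate from the $3^w$ counting bound (Theorem~\ref{thm:itb} with $T(w)=3^w$).

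I expect the main obstacle to be precisely this parity-and-sign bookkeeping: making it airtight that ``odd/even imbalance in the outcome'' lines up with ``odd/even appearance of the fake coin on the scale'' — which rests on the observation that the LH-coin is never real, so every on-scale appearance is an imbalance — and then verifying that the direction of each imbalance agrees with the translation rule in all four (parity $\times$ pan) cases. Everything else, including legitimacy of the strategy and distinctness of the itineraries, is formal once self-conjugacy is established.
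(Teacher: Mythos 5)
Your proposal is correct and follows essentially the same route as the paper: assign a self-conjugate set of outcomes, translate to itineraries via the odd/even-imbalance rule, invoke Corollary~\ref{cor:scstr} for legitimacy, and use the fact that an LH-coin always tips the scale when weighed (so its $k$-th on-scale appearance is the $k$-th imbalance, with parity determining light versus heavy) to match outcomes to coins. The only cosmetic difference is direction: you verify that the fake coin's run produces its assigned outcome, while the paper argues that an observed outcome forces the fake coin's itinerary; these are the same bijection read both ways.
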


\begin{proof}
Conjugate outcomes correspond to conjugate itineraries. As the set of assigned outcomes is self-conjugate, the set of itineraries is self-conjugate as well. By Corollary~\ref{cor:scstr} we can generate a legitimate set of weighings.

Consider an outcome $x$ of this strategy. If a weighing is balanced, the fake coin must not be on the scale. If a weighing is an odd imbalance, then the fake coin is on the lighter pan. That means if the imbalance is $<$, the fake coin is on the left pan;  otherwise, it is on the right pan. For an even imbalance, the opposite is true. That means the fake coin must have the itinerary as assigned by the rule above. As this is the only coin with this itinerary, the strategy finds it.
\end{proof}

Note that the strategy matches the information-theoretical bound (see Theorem~\ref{thm:itb}). This means that there is no adaptive strategy prescribing fewer weighings than the described oblivious strategy.

We see that this problem is very similar to the problem of finding one fake coin that is lighter than the real coin. Given the total number of coins, the same set of itineraries can provide oblivious strategies for both cases.

\subsection{Starting state is mixed}\label{sec:lh-mixed}

Suppose all coins are divided into two disjoint groups: $l$ coins, such that, if the fake coin is there, it must be in the light state; $h$ coins, such that, if the fake coin is there, it must be in the heavy state. We call such groups of coins the \textit{mixed-known} state, or $l:h$ state. 

As before, an outcome uniquely defines the self-itinerary. It follows that the coins in the $l$-group, as well as the coins in the $h$-group, must have distinct self-itineraries. Two coins in different starting states can, however, have the same self-itinerary; yet they cannot have conjugate self-itineraries.

That means that if at least one of $l$ or $h$ is even, we can produce an oblivious strategy. We do this by assigning pairs of coins in the same group to conjugate pairs of itineraries. If we have an extra coin, we assign a self-conjugate itinerary to it. Thus, we have proven the following lemma.

\begin{lemma}
If $lh$ is even and $l+h \leq 3^w$, there exists an oblivious strategy that finds the fake coin in the mixed $l:h$ state.
\end{lemma}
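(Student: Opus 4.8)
The plan is to reduce everything to the already-established machinery of oblivious strategies via self-conjugate itinerary sets (Corollary~\ref{cor:scstr}). First I would recall the two constraints that an assignment of itineraries must satisfy in the mixed $l:h$ setting: (1) coins within the same group must receive distinct self-itineraries, since an outcome uniquely determines the self-itinerary for a coin of known starting state; and (2) two coins — whether in the same group or in different groups — must not receive conjugate self-itineraries, because a conjugate pair would produce the same pair of outcomes and leave us unable to distinguish them. A pair of coins in different groups is allowed to share an itinerary, however, since their starting states differ and hence their outcomes will differ (the parities of the imbalances will be read differently). The goal is to exhibit an assignment of itineraries to the $l+h$ coins meeting (1) and (2), whose underlying multiset of itineraries is self-conjugate, so that Corollary~\ref{cor:scstr} hands us a legitimate oblivious strategy; then a short argument shows this strategy actually identifies the fake coin.

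Next I would do the combinatorial construction. Without loss of generality suppose $l$ is even (the case where $h$ is even is symmetric, or one simply swaps the roles via the conjugation involution). Since $l + h \le 3^w$, there are at least $l+h$ itineraries of length $w$ available, and the non-self-conjugate ones come in conjugate pairs while there is exactly one self-conjugate itinerary (the all-O string). I would pair up the $l$ coins of the light group into $l/2$ conjugate pairs of itineraries, using $l/2$ of the available conjugate pairs; this uses $l$ distinct itineraries, automatically self-conjugate as a set, and no two of these coins are conjugate to anything forbidden within their group except their designated partner — but a conjugate partner of a light coin is another light coin, which is fine only if... here I need to be slightly careful: constraint (2) forbids two coins in the same group from having conjugate itineraries, so I cannot actually pair light coins with each other. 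Instead the correct construction is: pair each light coin with a distinct itinerary, and pair each heavy coin with a distinct itinerary, but arrange the choices so that the total multiset is self-conjugate. Concretely, assign the $l$ light coins to $l$ itineraries $\delta_1,\dots,\delta_l$ chosen so that this set is itself self-conjugate (possible since $l$ is even: take $l/2$ conjugate pairs), and assign the $h$ heavy coins to itineraries $\epsilon_1,\dots,\epsilon_h$ that form a self-conjugate multiset as well, drawn from the remaining pool if possible or — crucially — reusing some of the $\delta_i$'s, which is permitted precisely because a heavy coin and a light coin may share an itinerary. Since $h \le 3^w - l + l = 3^w$ is not quite the bound I want, I would instead observe that the $h$ heavy coins can always be fit into a self-conjugate set of size $h$ disjoint-as-needed from constraint considerations: the only genuine obstruction is putting two heavy coins on conjugate itineraries, which is avoided by never assigning both $\epsilon$ and $\bar\epsilon$ to heavy coins — but a self-conjugate multiset of odd-index... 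The clean resolution is: if $h$ is also even, pair both groups internally into conjugate pairs across the group boundary; if $h$ is odd (so $l$ even), give the lone heavy coin the all-O itinerary and pair the remaining $h-1$ heavy coins with $(h-1)/2$ conjugate pairs, again taken so the whole picture is self-conjugate. The parity bookkeeping here is the main obstacle, and I would handle it by always routing any leftover single coin to the unique self-conjugate itinerary.

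Having produced a self-conjugate multiset of itineraries, Corollary~\ref{cor:scstr} gives a legitimate oblivious strategy. The final step is to verify correctness: given the observed outcome string $x$, I would argue that exactly one coin is consistent with $x$. For a coin in the light group, its itinerary-to-outcome correspondence is governed by the odd/even imbalance rule used in Section~\ref{sec:lhknown} (it starts light, so on odd weighings it appears on the lighter pan, on even weighings on the heavier pan, etc.); for a coin in the heavy group the roles of odd and even are swapped. Thus from $x$ we can read off, for each group separately, which itinerary the fake coin would have to carry, and since the assignment within each group is injective, at most one light coin and at most one heavy coin are compatible with $x$. Finally, two compatible candidates — one light, one heavy — cannot coexist: that would require their itineraries to be conjugate (since one reads the imbalances of $x$ with reversed parity relative to the other), and constraint (2) is exactly what we ensured. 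Hence the fake coin is uniquely identified, and since $l+h \le 3^w$ weighings suffice for the construction, the lemma follows. I expect the correctness verification to be routine once the itinerary assignment is in hand; the parity-driven construction avoiding conjugate collisions across groups is where the real care is needed.
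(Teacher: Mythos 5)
Your proof has a genuine gap, and it stems from getting the conjugacy constraint backwards. You assert as constraint (2) that two coins in the \emph{same} group must not receive conjugate itineraries. This is false. If two light-group coins carry conjugate itineraries $\delta$ and $\bar{\delta}$ (with $\delta\neq\bar{\delta}$), the outcomes they would produce are conjugate strings; since $\delta$ contains an on-scale letter, those outcomes contain an imbalance and are therefore distinct, so the two coins are perfectly distinguishable. The actual prohibition is only on conjugate itineraries \emph{across} groups: a light coin on one pan and a heavy coin on the opposite pan tilt the scale the same way, so a light coin with $\delta$ and a heavy coin with $\bar{\delta}$ yield identical outcomes. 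The paper's construction exploits exactly the configuration you forbid: it assigns conjugate pairs of itineraries to pairs of coins \emph{within} the same group, plus the all-O itinerary to the single leftover coin when one of $l$, $h$ is odd (at most one is odd since $lh$ is even). Because each conjugate pair is consumed entirely inside one group, no cross-group conjugate collision can occur, and the multiset of itineraries is automatically self-conjugate, so Corollary~\ref{cor:scstr} applies.

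The consequences of the reversed constraint propagate through your write-up. You first propose the correct construction, then declare it illegal (``I cannot actually pair light coins with each other''), then carry it out anyway without retracting the objection, so the argument is never internally consistent. Worse, the repair you float --- letting heavy coins \emph{reuse} itineraries $\delta_i$ already given to light coins --- is actively dangerous: since your light-group itineraries are chosen closed under conjugation, a heavy coin reusing $\delta_i$ sits conjugate to the light coin holding $\bar{\delta_i}$, which is precisely the forbidden cross-group collision. (Sharing an identical itinerary across groups is harmless in isolation, but not once the other group's set contains the conjugate.) Finally, your closing correctness argument invokes ``constraint (2) is exactly what we ensured,'' but your construction does not ensure the within-group half of (2), and only the cross-group half is needed or true. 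Once you replace (2) by the correct statement --- distinct itineraries within a group, no conjugate itineraries across groups --- your final case split (both even: conjugate pairs within each group; one odd: the lone coin gets the all-O itinerary) coincides with the paper's proof and goes through.
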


Suppose $l$ and $h$ are both odd. To begin with, we should note that the $1:1$ state is unsolvable. Without loss of generality, let us assume that $h > 1$. Consider an example of a $1:3$ state. It can be solved in two weighings in an oblivious strategy by assigning the itinerary LO to the $l$-coin, and itineraries LO, RL, and RR to the $h$ coins. 

We will expand this example to solve the $l:h$ state in $w$ weighings for $l+h \leq 3^w-3$ and $l$ and $h$---both odd. Namely, we assign LOOOO$\ldots$ to the $l$-coins and LOOOO$\ldots$, RLOOO$\ldots$, and RROOO$\ldots$ to three $h$-coins. Next, we remove the itineraries that we used and their conjugates from consideration. So far, we have assigned itineraries to 4 coins and have removed 6 itineraries. We divide all the other itineraries into conjugate pairs, and assign conjugate pairs to two coins in the same group.

\begin{lemma}
Suppose $l+h \leq 3^w$, except the case when both $l$ and $h$ are odd and $l+h= 3^w -1$; then there exists an oblivious strategy that finds the fake coin in the mixed $l:h$ state. If both $l$ and $h$ are odd and $l+h= 3^w -1$, neither an adaptive nor an oblivious strategy that finds the coin in $w$ weighings exists.
\end{lemma}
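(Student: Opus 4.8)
The plan is to prove the lemma in two parts, matching its two claims. For the positive direction, I would split into cases according to the parity of $lh$. If $lh$ is even, the previous lemma applies directly and we are done. If both $l$ and $h$ are odd, then the hypothesis forces $l+h$ to be even, and since we have excluded $l+h = 3^w-1$ (which is odd anyway when $3^w$ is odd, so in fact the excluded case never occurs under $l+h\le 3^w$ with $l,h$ both odd unless $l+h=3^w-1$; I would be careful here, since $3^w$ is odd, $l+h$ even means $l+h\le 3^w-1$, so $l+h\le 3^w-3$), we may invoke the construction sketched just before the lemma: assign LOO$\cdots$O to the single distinguished $l$-coin and to one $h$-coin, then RLO$\cdots$O and RRO$\cdots$O to two more $h$-coins, remove these three itineraries together with their conjugates (six itineraries total, all distinct since none is self-conjugate and the three are pairwise non-conjugate), and pair up the remaining $3^w-6$ itineraries into conjugate pairs, handing each pair to two coins of the same group. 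Since $l-1$ and $h-3$ are both even and nonnegative (using $h>1$, hence $h\ge 3$ as $h$ is odd; the $1:1$ case is excluded by $l+h\le 3^w-3$ being incompatible only if $w\ge 1$, so I must note $1:1$ needs separate mention — actually $1+1=2\le 3^w$ for $w\ge 1$, so I should observe $1:1$ is the one exception hiding inside "$l+h=3^w-1$" only when $w=1$). I would verify that the resulting itinerary set is self-conjugate, hence legitimate by Corollary~\ref{cor:scstr}, and that each coin's outcome is forced exactly as in the known-state analysis, so the strategy works.

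For the impossibility direction, suppose $l$ and $h$ are both odd and $l+h = 3^w-1$. I would argue by a counting/parity obstruction that applies to \emph{any} strategy, adaptive or oblivious. The key observation is that in $w$ weighings there are exactly $3^w$ possible outcome strings, exactly one of which is self-conjugate (the all-$=$ string). An outcome that contains an imbalance determines, for the coin it points to, not just which coin is fake but also enough information to pin down its state on each weighing — in particular its starting state — because an LH-coin's state is completely determined by how many times it has been weighed, and an imbalance reveals its parity-of-appearances at that point combined with which pan it sits on. So a non-self-conjugate outcome is compatible with a coin from the $l$-group only if a consistent light-start history exists, and with a coin from the $h$-group only if a consistent heavy-start history exists; crucially, conjugate outcomes $x$ and $\bar x$ cannot both be "used up" by coins in a way that leaves the accounting consistent, because swapping all pans swaps light-start and heavy-start roles.

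The heart of the argument — and the step I expect to be the main obstacle — is turning this into a clean parity contradiction. I would set it up as follows: pair the $3^w-1$ non-self-conjugate outcomes into $(3^w-1)/2$ conjugate pairs $\{x,\bar x\}$. Each fake-coin candidate (there are $l+h = 3^w-1$ of them) must be assigned a distinct outcome, and together with the at-most-one coin that can claim the self-conjugate outcome, we would need to cover essentially all $3^w$ outcomes — but $l+h = 3^w-1$ candidates can occupy at most $3^w-1$ outcomes, so the self-conjugate outcome is either unused or used, and one outcome slot is "tight." The real constraint is that within each conjugate pair $\{x,\bar x\}$, the two outcomes, if both assigned, must go to coins whose starting states are swapped relative to each other — i.e., one to an $l$-coin and one to an $h$-coin, OR both to coins in a group in a way the construction can realize only when that group has even size. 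Because $l$ and $h$ are both odd, after matching as many cross-group conjugate pairs as possible we are left with an odd leftover on each side that cannot be internally paired, and there is no self-conjugate outcome to absorb a lone coin from \emph{both} groups simultaneously — only one such outcome exists. I would make this precise by a careful case analysis on whether the all-$=$ outcome is assigned to an $l$-coin, an $h$-coin, or to no coin, showing each leads to a parity violation. For the adaptive case the same argument runs on self-itineraries and self-outcomes, using the lemma that distinct coins have distinct self-itineraries and the fact that the first weighing (hence the branching structure) respects conjugation in the required way; I would remark that adaptivity gives no extra power here precisely because the obstruction is an outcome-counting parity fact, not a strategic one.
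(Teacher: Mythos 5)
Your positive direction follows the paper's construction essentially verbatim: the even-$lh$ case via conjugate pairs within a group, and the both-odd case via the LOO$\ldots$, RLO$\ldots$, RRO$\ldots$ seed together with its conjugates, after observing that excluding $l+h=3^w-1$ forces $l+h\le 3^w-3$. The $1:1$ wrinkle you stumble over is real, but it is a defect of the lemma's statement (the paper also just notes that $1:1$ is unsolvable and assumes $h>1$), not of your argument.

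The impossibility direction has a genuine gap, and you have located it yourself: the ``clean parity contradiction'' you defer is exactly the missing step, and the mechanism you propose for it is not the right one. Pure outcome counting cannot produce the obstruction, since there are $3^w$ outcomes and only $3^w-1$ coins. Moreover, your central claim about conjugate pairs --- that if both outcomes of a pair $\{x,\bar x\}$ are assigned they must go to coins in different groups unless the group is even --- is unjustified and in fact contradicts the positive construction, where conjugate outcomes are deliberately assigned to pairs of coins in the \emph{same} group. (An $l$-coin with outcome $x$ has itinerary $\delta(x)$ and an $h$-coin with outcome $y$ has itinerary $\overline{\delta(y)}$; these are conjugate precisely when $x=y$, which never happens since outcomes are distinct, so conjugacy imposes no cross-group restriction at the level of outcomes.) The paper's obstruction lives in the first weighing, not in the outcome alphabet: writing $l_1,l_2$ and $h_1,h_2$ for the group counts on the two pans, a balance forces at least $3^{w-1}$ coins per pan (else too many candidates survive the balance), while the two imbalance branches force $l_1+h_2\le 3^{w-1}$ and $l_2+h_1\le 3^{w-1}$; together these pin the pan size to exactly $3^{w-1}$ and force $l_1=l_2$, $h_1=h_2$. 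A balance therefore removes an even number of coins from each group, preserving both parities and leaving $3^{w-1}-1$ candidates, and induction descends to the unsolvable $1:1$ base case. This argument applies verbatim to adaptive strategies, which is why no separate oblivious argument is needed; your remark that adaptivity gives no extra power ``because the obstruction is an outcome-counting parity fact'' mischaracterizes where the obstruction lies. To repair your write-up, replace the conjugate-pair case analysis with this induction on the first weighing.
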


\begin{proof}
What is left to show is that for the case $l+h = 3^w-1$, and $l$ and $h$ both odd, an adaptive strategy is impossible. Consider the first weighing. We have to put $3^{w-1}$ coins on each pan; otherwise, if the weighing yields a balance, we would have too many coins left. Suppose we have $l_1$ and $l_2$ coins from the $l$-group on the first and the second pan correspondingly, and, similarly, $h_1$ and $h_2$ for the $h$-group. If the left pan is lighter, then the fake coin is either one of the $l_1$ coins in the light state on the left pan, or one of the $h_2$ coins in the heavy state on the right pan. It follows that $l_1+h_2 \leq 3^{w-1}$. Similarly, $l_2+h_1 \leq 3^{w-1}$. Therefore, $l_1=l_2$ and $h_1=h_2$. If the weighing balances, then we have used an even number of coins in each group and are left with a situation where we can invoke induction. The lemma follows from the fact that the $1:1$ state is unsolvable.
\end{proof}

\subsection{Starting state is unknown}\label{sec:lh-unknown}

We start with the following lemma.

\begin{lemma}
In the oblivious strategy of finding one light-heavy coin that starts in the unknown state, we cannot have two coins with conjugate itineraries.
\end{lemma}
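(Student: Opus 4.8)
The plan is to argue by contradiction: suppose an oblivious strategy exists in which two coins $A$ and $B$ have conjugate itineraries $\delta$ and $\bar\delta$. Since the starting state is unknown, in an oblivious strategy each of these coins, if fake, could be in the light state or in the heavy state to begin with. The idea is to exhibit two distinct scenarios that produce identical outcome strings, so the strategy cannot distinguish them.

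First I would dispose of the trivial case: if $\delta$ is the all-O itinerary then $\bar\delta = \delta$, so $A$ and $B$ have the same itinerary, and by the lemma on distinct self-itineraries that is already impossible. So assume $\delta$ contains at least one on-scale letter, and let $k$ be the index of the first such letter; without loss of generality $A$ is on some pan in weighing $k$ and $B$ is on the opposite pan. Now compare the following two scenarios: (i) $A$ is the fake coin and starts in the light state; (ii) $B$ is the fake coin and starts in the light state. In scenario (i), in weighing $k$ the fake coin is a light coin on (say) the left pan, so the outcome symbol is $<$. In scenario (ii), $B$ is on the right pan in weighing $k$ and is also light, so again the left pan is heavier, giving $<$. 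The key point is that since $A$ and $B$ have conjugate itineraries, in every weighing they sit on opposite pans (or both sit out), and in both scenarios the fake coin has been weighed exactly the same number of times up to and including any given weighing — hence it is in the same state at each weighing in scenario (i) as in scenario (ii). Therefore the two scenarios generate exactly the same outcome string in every weighing, and the strategy cannot tell whether $A$ or $B$ is fake.

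The main obstacle, and the step I would be most careful about, is the bookkeeping of states: I must check that in scenarios (i) and (ii) the fake coin is truly weighed the same number of times before each weighing. This is immediate from the fact that $\delta$ and $\bar\delta$ have on-scale letters in exactly the same positions (conjugation only swaps L and R, never touches O), so whenever $A$ is on the scale $B$ is too, and vice versa. Consequently, at weighing $i$ both hypothetical fake coins have been on the scale the same number of previous times and are in the same state (light or heavy); being light or heavy simultaneously and on opposite pans, they tip the scale the same way — and when both sit out, the weighing balances in both scenarios. Thus the outcome strings coincide, contradicting that the strategy finds the fake coin. I would also remark that this argument uses only that the strategy is oblivious in an essential way (adaptive strategies can branch differently), which matches the lemma's hypothesis.
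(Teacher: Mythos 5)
There is a genuine error in your choice of scenarios. You compare (i) $A$ fake starting light with (ii) $B$ fake starting light, and claim these give identical outcome strings. They do not: since $A$ and $B$ sit on \emph{opposite} pans whenever they are on the scale, and in your two scenarios the hypothetical fake coin is in the \emph{same} state at each weighing, the two scenarios tip the scale in \emph{opposite} directions and produce \emph{conjugate} outcome strings, not equal ones. Your own sentence betrays the slip: in scenario (i) a light coin on the left pan makes the left pan lighter, giving $<$; in scenario (ii) a light coin on the right pan makes the right pan lighter, i.e.\ the left pan heavier, which is $>$, yet you write ``the left pan is heavier, giving $<$.'' The assertion ``being light or heavy simultaneously and on opposite pans, they tip the scale the same way'' is exactly backwards. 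A strategy could in principle distinguish your two scenarios, so no contradiction is reached.

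The repair is small but essential, and it is the point of the paper's (very terse) proof: pair scenario (i), $A$ fake starting in the light state, with scenario (ii$'$), $B$ fake starting in the \emph{heavy} state. Because $\delta$ and $\bar\delta$ have on-scale letters in the same positions (your bookkeeping of the number of prior weighings is fine and worth keeping), at every weighing the two hypothetical fakes are in \emph{opposite} states on \emph{opposite} pans, so they deflect the scale identically, and when both sit out the weighing balances in both scenarios. Hence the outcome strings coincide and the strategy cannot decide between ``$A$ is fake, started light'' and ``$B$ is fake, started heavy'' --- which is precisely where the hypothesis that the starting state is \emph{unknown} enters. Your disposal of the all-O case and your observation that obliviousness is used essentially are both correct and can stay as written.
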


\begin{proof}
Suppose we have two conjugate itineraries assigned to two different coins. Suppose one of these coins ends up being fake. The second coin is then always opposite the first coin, and is on the scale at least once. That means the second coin might also be fake, starting in the opposite state.
\end{proof}

The problem of finding the light-heavy coin is similar to another classical coin-weighing problem, in which we need to find one coin that might be heavier or lighter. In this calssical problem, an oblivious strategy cannot have two conjugate itineraries assigned to two different coins. Indeed, if one coin in the pair is found to be fake and lighter, then the other coin might also be fake, but heavier.

There are many papers that explain an oblivious strategy for this classical problem, for 12 and 13 coins, that can be solved in 3 weighings \cite{Schell47}. The bounds for any number of weighings are done in \cite{Fine}. The oblivious optimal strategy for any number of coins is in \cite{Dyson,Smith}.

\begin{lemma}
We can find the light-heavy coin starting in an unknown state in $w$ weighings if the number of coins is not more than $(3^w-1)/2$.
\end{lemma}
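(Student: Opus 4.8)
The plan is to exhibit an explicit oblivious strategy that processes exactly $(3^w-1)/2$ coins, matching the information-theoretical bound from Theorem~\ref{thm:itb} (since here $T(w) = 3^w$ and $S = 2$, so $(T+S-1)/S = (3^w+1)/2$, and we claim one fewer because the self-conjugate outcome, being all equalities, points to a coin never weighed, whose unknown state cannot be determined — so effectively we have $(3^w-1)/2$ usable coins plus possibly that one). First I would recall the constraint proven in the preceding lemma: in an oblivious strategy for the unknown-state light-heavy coin, no two coins may have conjugate itineraries. Combined with the standard legitimacy condition (equal pan sizes at each weighing, Corollary~\ref{cor:scstr}), this means we must choose a set of itineraries of length $w$, no two of which are conjugate, that still assembles into a legal weighing scheme.

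The key construction I would use mirrors the classical Dyson/Smith solution for the "one heavy-or-light coin" problem, which the text has just flagged as the right analogy. Among the $3^w$ itinerary strings in $\{L,R,O\}^w$, discard the single all-$O$ string; the remaining $3^w-1$ strings split into $(3^w-1)/2$ conjugate pairs. From each pair pick exactly one representative — for definiteness, the one whose first non-$O$ letter is $L$ (every non-constant string has such a letter, and exactly one of a conjugate pair satisfies this). This gives a set $\mathcal{I}$ of $(3^w-1)/2$ itineraries, no two conjugate, one per coin. The second step is to verify that $\mathcal{I}$ can be balanced into a legal strategy: for weighing $i$, I need $|\textup{Left}_i| = |\textup{Right}_i|$. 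This will not hold for the raw set $\mathcal{I}$, so — exactly as in the classical problem — I would instead assign the itineraries so that the count works out, using the freedom to toggle representatives between a string and its conjugate. The cleanest way is the Dyson "rotation" argument: label the three symbols as residues and arrange the chosen itineraries so that, reading cyclically, the left/right assignments balance; alternatively, invoke the classical result directly, since a legal oblivious strategy for $(3^w-1)/2$ coins in the heavy-or-light problem is known to exist, and its set of itineraries is precisely a conjugate-free set of size $(3^w-1)/2$ — which is exactly what we need here.

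The final step is to check correctness of the resulting strategy for our light-heavy coin. Given the actual outcome string $x \in \{=,<,>\}^w$: on balanced weighings the fake coin is off-scale, so its itinerary has $O$ there; on an unbalanced weighing the fake coin is on the lighter pan if it is in the light state and on the heavier pan if in the heavy state. Since the coin's state alternates deterministically with each on-scale appearance, once we know the parity of its first on-scale position relative to its (unknown) starting state, the full itinerary is forced; and because no two of our itineraries are conjugate, the outcome $x$ together with its conjugate $\bar x$ single out at most one coin. I would argue that exactly one itinerary in $\mathcal{I}$ is consistent with $x$ under some starting state, identifying the fake coin (though possibly not resolving its starting state, which the bound already concedes).

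The main obstacle is the balancing step: showing that a conjugate-free transversal of size $(3^w-1)/2$ can actually be realized as a legal oblivious weighing scheme. Naively choosing representatives breaks pan-size equality, and one must use the structural flexibility of swapping a string for its conjugate — together with a counting/rotation argument — to fix it. I expect this to go through cleanly by citing or adapting the classical Dyson–Smith construction, but it is the step that requires genuine care rather than routine verification.
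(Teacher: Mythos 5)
You have proved the opposite direction from what the paper's proof of this lemma actually establishes. Despite the word ``if'' in the statement, the paper's argument here is an \emph{impossibility} argument: it shows that the information-theoretic ceiling $(3^w+1)/2$ cannot be reached because, among the conjugate pairs of outcomes, exactly $3^{w-1}$ --- an odd number --- of the corresponding coins would have to be on the scale in the first weighing, so at least one pair must be discarded, yielding the upper bound $(3^w-1)/2$. The achievability direction that you set out to prove is the content of the \emph{next} theorem in the paper, so your proposal is really an attempt at that theorem rather than at this lemma.

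Taken as an achievability argument, your construction has a concrete flaw --- and, ironically, it is precisely the parity obstruction this lemma's proof is devoted to. You discard the all-$O$ itinerary and take one representative from each of the $(3^w-1)/2$ conjugate pairs. Of these pairs, exactly $3^{w-1}$ consist of strings whose first letter is an on-scale letter; and since conjugation only exchanges L with R while fixing every O, toggling a representative for its conjugate never changes which weighings that coin participates in. Hence \emph{any} transversal of all $(3^w-1)/2$ pairs puts exactly $3^{w-1}$ coins --- an odd number --- on the scale in the first weighing, and $|\textup{Left}_1|=|\textup{Right}_1|$ is unachievable no matter how you rotate or swap representatives; the ``balancing step'' you flag as the main obstacle genuinely fails for your set $\mathcal{I}$. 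The correct Dyson--Smith-style construction drops one conjugate pair entirely and \emph{includes} the all-$O$ itinerary, for $(3^w-3)/2+1=(3^w-1)/2$ coins. Relatedly, your stated reason for the bound dropping from $(3^w+1)/2$ to $(3^w-1)/2$ (that the never-weighed coin's state cannot be determined) is not the right one: the problem asks only to find the coin, not its state, and the never-weighed coin is in fact used in the optimal strategy; the true reason is the first-weighing parity above.
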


The proof is the same as in the classical case \cite{Fine}. We will repeat it, as we use a similar reasoning later.

\begin{proof}
Suppose conjugate pairs of outcomes are assigned to coins. The sole self-conjugate outcome may be assigned to only one coin. This way, we can process at most $(3^w+1)/2$ coins in $w$ weighings. This is the same bound as the bound in Theorem~\ref{thm:itb}. Another consideration is parity. Out of $(3^w+1)/2$ conjugate pairs of outcomes, exactly $(3^{w-1}+1)/2$ start with $=$. That means an odd number of coins, namely $3^{w-1}$, corresponds to pairs of outcomes that start with an imbalance. These are precisely the coins that will be put on the scale in the first weighing. Yet we have to use an even number of coins. That means that we need to throw out at least one of the conjugate pairs. Thus, we can process no more than $(3^w-1)/2$ coins in $w$ weighings.
\end{proof}

In the classical problem, we know that any number of coins up to the theoretical maximum above can be processed in $w$ weighings \cite{Dyson, Smith}. This is done by picking one itinerary from a conjugate pair in such a way that the resulting set of itineraries creates a legitimate weighing strategy. We can use the same set of itineraries to solve our light-heavy problem.

\begin{theorem}
If the number of coins $N$ is in the range $(3^{w-1}-1)/2 < N \leq (3^w-1)/2$, then the light-heavy coin with an unknown starting state can be found in $w$ weighings, using an oblivious strategy.
\end{theorem}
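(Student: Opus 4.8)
The plan is to reduce the light-heavy problem to the classical ``one coin, heavier or lighter'' problem, whose oblivious optimal strategy for every $N$ in the stated range is already known \cite{Dyson,Smith}. First I would recall the structural dictionary established in the previous subsections: an outcome $x$ of length $w$ uniquely determines the self-itinerary of the fake light-heavy coin that produces it, via the odd/even imbalance rule (odd imbalance: $<\mapsto$L, $>\mapsto$R; even imbalance: $<\mapsto$R, $>\mapsto$L; equality $\mapsto$O). Conjugate outcomes map to conjugate itineraries under this rule, and the rule is a bijection between outcomes and itineraries. So the combinatorial content of a legitimate oblivious strategy — that in weighing $i$ the left and right piles have equal size — is exactly a condition on which itineraries (equivalently, which outcomes) we select.

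Next I would invoke the classical result: for $(3^{w-1}-1)/2 < N \leq (3^w-1)/2$ there is a set $\mathcal{I}$ of $N$ itineraries, one chosen from each of $N$ conjugate pairs (with the self-conjugate all-O itinerary used for at most one coin, and in fact unused when parity forces it out), such that $|\mathrm{Left}_i| = |\mathrm{Right}_i|$ for every $i$; this is the Dyson--Smith construction. By the preceding Lemma that bound is also an upper bound, so $\mathcal{I}$ is optimal. Now assign these $N$ itineraries to the $N$ coins. Since $|\mathrm{Left}_i| = |\mathrm{Right}_i|$ for all $i$, the Lemma on legitimate strategies shows this is a legitimate oblivious weighing scheme.

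It remains to check that this scheme actually identifies the fake light-heavy coin regardless of its unknown starting state. Here I would argue: run the weighings and read off the outcome string $x$. Translate $x$ back to an itinerary $\delta(x)$ by the odd/even rule. I claim $\delta(x) \in \mathcal{I}$ and the coin with itinerary $\delta(x)$ is the fake one. Indeed, whichever coin is fake and whatever its starting state, at weighing $i$ it is light iff $i$ has a given parity relative to its first on-scale appearance; but because the starting state is unknown we must be careful — this is the step I expect to be the main obstacle. The resolution is the earlier observation that a coin on the scale in an unbalanced weighing reveals, once found, its state at that weighing and hence (by the deterministic alternation) at all weighings, so the coin's itinerary together with the observed outcome pins down a consistent history; and because $\mathcal{I}$ contains no conjugate pair, no other coin-plus-starting-state pair can produce the same outcome $x$. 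Thus $x$ is self-consistent with exactly one coin in exactly one starting state, namely the coin whose itinerary is $\delta(x)$, and the strategy finds it. Combining with the matching lower bound from the previous Lemma completes the proof; I would close by remarking that, exactly as in \cite{Fine,Dyson,Smith}, the whole argument is word-for-word the classical one, the only new ingredient being the outcome-to-itinerary translation specific to alternating light-heavy coins.
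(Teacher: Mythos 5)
Your proposal is correct and follows essentially the same route as the paper: borrow the Dyson--Smith itinerary set (no two conjugate itineraries, balanced pans), observe that an outcome $x$ forces the fake coin's itinerary to be $\delta(x)$ if it started light and $\overline{\delta(x)}$ if it started heavy, and conclude that since at most one of each conjugate pair was assigned, the coin is pinned down. The only slip is your parenthetical about the all-O itinerary being dropped for parity---it is a conjugate pair starting with an imbalance that must be discarded, while the self-conjugate itinerary is kept---but this does not affect the argument.
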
 

\begin{proof}
First, we assign itineraries to coins in the same way as in the classical problem: no more than one itinerary from a conjugate pair is assigned, and the itineraries balance each other. This means that we have a legitimate weighing strategy. Suppose this strategy produces an outcome $x$. If the coin started in the light state, then its itinerary $\delta$ must be uniquely defined by the outcome via rules in Section~\ref{sec:lh-unknown}. If it started in the heavy state, the itinerary must be $\bar{\delta}$. As not more than one of these itineraries were used, the fake coin is found.
\end{proof}

\section{The Light-Real Coin}\label{sec:alternator}

In this section we discuss the light-real coin. This coin was studied in \cite{STEPAlternators}, where it was called an alternator, and optimal adaptive strategies for known and unknown starting states were proposed. We provide a summary of those results for a complete picture, as well as an oblivious strategy for these states. In addition, we discuss the mixed state.

The outcomes for this case have the following property: imbalances cannot follow each other. It follows that the number of possible outcomes of lengths $w$ is $J_{w+2}$, where $J_w$ is a Jacobsthal number. 

Jacobsthal numbers are defined as a sequence with a recursion: $J_{w+1}=J_w+2J_{w-1}$, and initial conditions $J_0=0$, $J_1=1$. The Jacobsthal sequence grows approximately as a power of 2: $J_{n+1}=2J_n + (-1)^n$ and $J_n = (2^n - (-1)^n)/3$.

\subsection{Starting state is known}

If the coin starts in the light state, then the number of possible outcomes of length $w$ is bounded by $J_{w+2}$. If the coin starts in the real state, then the first symbol of an outcome must be $=$ and the number of possible outcomes is bounded by $J_{w+1}$.

The paper \cite{STEPAlternators} provides an adaptive strategy that shows that the bound described here is exact. In this paper we build itineraries for the coins that describe an oblivious strategy with the same bound. 

The interesting new thing that happens here, as compared to the previous sections, is that the itineraries are not uniquely defined by the outcome. If the outcome has an imbalance for a weighing, then the pile where the fake coin must be is uniquely defined. If the symbol is $=$, then the pile is not defined uniquely. The condition is that the coin has to be on the scale exactly once between two unbalanced weighings. Also, if the starting state is real, the fake coin must appear on the scale before the first unbalanced weighing.

Let us suppose that the starting state is light. This is how we build our itineraries: we say that the coin is on the scale in weighing $i$ if the $i$-th symbol in the outcome is $=$ and the previous symbol is unbalanced. For this weighing, we put the coin on the same pan that held the coin in the previous weighing.

Here is the exact rule to create a letter in the itinerary in the $i$-th place from the $i$-th and $(i-1)$st symbol in the outcome:

\begin{itemize}
\item If the outcome has $<$ (correspondingly $>$) in the $i$-th place, the itinerary has L (correspondingly R).
\item If the outcome has $=$ in the $i$-th place and the previous place either does not exist or also has $=$, the itinerary has O.
\item If the outcome has $=$ in the $i$-th place and $<$ in place $i-1$ (correspondingly $>$ in place $i-1$), the itinerary has L (correspondingly R).
\end{itemize}

Note that we can describe the itineraries in such a way that every on-scale letter (L or R) appears in consecutive pairs, unless it is the last one.

Suppose the starting state is real. Then, we need to place the fake coin on the scale exactly once, before the first unbalanced weighing occurs. As a reminder, the first outcome is always balanced, and therefore every unbalanced weighing has a previous balanced weighing. And so, we place the fake coin on the scale for a balanced weighing before each imbalance. Here is the exact rule to create a letter in the itinerary in the $i$-th place from the $i$-th and $(i+1)$-st symbol in the outcome:

\begin{itemize}
\item If the outcome has $<$ (correspondingly $>$) in the $i$-th place, the itinerary has L (correspondingly R).
\item If the outcome has $=$ in the $i$-th place and the next place either does not exist or also has $=$, the itinerary has O.
\item If the outcome has $=$ in the $i$-th place and $<$ in place $i+1$ (correspondingly $>$ in place $i+1$), the itinerary has L (correspondingly R).
\end{itemize}

Note that we can describe the itineraries in such a way that every on-scale letter appears in consecutive pairs.

It is easy to see that conjugate outcomes generate conjugate itineraries per these rules. We use these itinerary assignments to prove our theorem.

\begin{theorem}
If the number of coins $N$ is between Jacobshtal numbers: $J_{w+1} < N \leq J_{w+2}$, then there exists an oblivious strategy that finds the light-real coin in $w$ weighings if it starts in the light state, and in $w+1$ weighings if it starts in the real state.
\end{theorem}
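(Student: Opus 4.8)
The plan is to build explicit itinerary sets from the outcome sets in each of the two cases and to verify that they generate a legitimate oblivious strategy that actually pins down the fake coin.

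First I would handle the light-start case. There are at most $J_{w+2}$ valid outcomes of length $w$ (no two imbalances adjacent), and since $N \le J_{w+2}$ we may assign distinct outcomes to the $N$ coins, doing so in conjugate pairs, with the unique self-conjugate outcome $===\cdots=$ reserved for one coin if $N$ is odd. Apply the stated translation rule to get an itinerary for each coin. Since conjugate outcomes map to conjugate itineraries, the itinerary set is self-conjugate, so by Corollary~\ref{cor:scstr} it yields a legitimate weighing strategy. It then remains to check that, when the fake coin has outcome $x$, the weighings actually produce $x$ and single out that coin: an imbalance in position $i$ forces the fake coin onto the lighter pan in weighing $i$, and the rule puts exactly this coin there with the correct weight (it is in the light state because the previous on-scale appearance, if any, was the paired $=$-weighing, and the alternator toggles L $\to$ R $\to$ L$\dots$, so between two imbalances it is weighed exactly once — once in the $=$-weighing right after the first imbalance — landing back in light for the next imbalance); a $=$ in position $i$ either leaves the fake coin off the scale or places it on the scale in the real state, both of which balance. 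So the outcome is $x$ and, $x$ being unique to that coin, the coin is identified.

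Next I would treat the real-start case in $w+1$ weighings. Here a valid outcome of length $w+1$ must begin with $=$ (the coin is real for the first weighing) and still forbid adjacent imbalances; the count of such strings is $J_{w+2}$ — this is exactly where the "$+1$" in the number of weighings is spent, trading the forced leading $=$ for one extra length. Assign these $J_{w+2} \ge N$ outcomes to coins, again in conjugate pairs with the all-$=$ outcome reserved for an odd coin, and apply the second translation rule (looking forward to position $i+1$ rather than backward). Conjugation is again respected, so Corollary~\ref{cor:scstr} gives a legitimate strategy; and one checks that the fake coin is placed on the scale exactly once before each imbalance, in a balanced weighing, so it is in the light state precisely at each imbalance and off-scale or real otherwise, reproducing its outcome and identifying it.

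The main obstacle I expect is the bookkeeping in the verification step: one must confirm that the toggling alternator, under the chosen itinerary, is genuinely in the light state at every position where the outcome has an imbalance and is \emph{not} light (i.e., is real or off-scale) at every $=$ position — in particular that consecutive on-scale letters come in the pairs the rules produce, so the parity of on-scale appearances works out. The "no two adjacent imbalances" property of the outcome set is what makes this parity argument go through, and the self-conjugacy of the itinerary set is what keeps the two pans balanced; assembling these two facts carefully is the crux, while the counting identity (valid outcomes of the stated form number $J_{w+2}$) follows from the Jacobsthal recursion by a routine induction on $w$.
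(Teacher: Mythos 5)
Your proposal is correct and follows essentially the same route as the paper's proof: assign outcomes in conjugate pairs (reserving the self-conjugate outcome for an odd coin), translate to itineraries by the stated backward/forward rules, invoke Corollary~\ref{cor:scstr} for legitimacy, and use the unique outcome-to-coin correspondence to identify the fake. Your forward simulation of the alternator's state along its itinerary just makes explicit the verification that the paper compresses into the remark that the self-itinerary is uniquely recoverable from the outcome and the known starting state.
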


\begin{proof}
Given the total number of coins, we assign the outcomes to coins similar to the method we used for the light-heavy coin with a known starting state in Section~\ref{sec:lhknown}. If the number of coins is odd, then one of the coins is matched to the self-conjugate outcome. The rest of the coins are assigned in pairs to conjugates outcomes. We build itineraries from the outcomes as described above. This way, we are guaranteed to have itineraries in conjugate pairs, and therefore a corresponding strategy exists.

Suppose this strategy produces an outcome $x$; then the corresponding self-itinerary must be uniquely defined in places where the outcome is unbalanced. Since we know the starting state of the coin and the rules we created for building possible itineraries, we know the whole self-itinerary and, consequently the light-real coin.
\end{proof}

\subsection{Starting state is mixed}\label{sec:lr-mixed}

Suppose all coins are divided into two disjoint groups: $l$ coins such that, if the fake coin is there, it must be in the light state, $r$ coins such that, if the fake coin is there, it must be in the real state. We call such groups of coins the \textit{mixed-known} state, or $l:r$ state. 

We already know from counting the outcomes that, to process $N$ coins in $w$ weighings, we need the following inequalities: $r \leq J_{w+1}$ and $l+r \leq J_{w+2}$. This means that we can assign outcomes to coins in the following manner. First, we assign the outcomes that start with $=$ to the coins in the $r$-group, and then other outcomes to the coins in the $l$-group. Moreover, we strive to assign pairs of conjugate outcomes to the same group. If the number of coins in the $r$-group is odd, we assign the self-conjugate outcome to one coin in the $r$-group, and other outcomes, starting with the balance in conjugate pairs, to other coins in the $r$-group. If the number of coins in the $r$-group is even, then the total number of coins in the group is less than the total number of outcomes, starting with the balance. In this case, we assign conjugate pairs of outcomes to the coins in the $r$-group, and leave the self-conjugate outcome for a coin in the $l$-group, if needed. After all the coins in the $r$-group are assigned, we assign the outcomes to the coins in the $l$-group in conjugate pairs. 

The only case left is when $l$ and $r$ are odd. In this case, we assign the self-conjugate outcome to a coin in the $r$-group, and the outcome $<===\ldots$ to a coin in the $l$-group. The other outcomes we assign in conjugate pairs.

We will need to match outcomes to itineraries. Unlike what we saw before with other types of coins, an outcome does not define the self-itinerary uniquely. For example, coins in the real state with itineraries LO, RO, OL, OR, and OO all have the same outcome $==$. The good news is that, given an itinerary and the state of a coin, the outcome that leads to this coin is defined uniquely.

\begin{theorem}
There is an oblivious strategy that solves the $l:r$ case in $w$ weighings, as long as $r \leq J_{w+1}$ and $l+r \leq J_{w+2}$.
\end{theorem}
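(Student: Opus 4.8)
The plan is to construct an explicit oblivious strategy by first assigning outcomes to coins and then translating those outcomes into itineraries, exactly along the lines laid out in the paragraphs immediately preceding the theorem. Concretely, I would first invoke the outcome-counting facts already established: there are $J_{w+2}$ outcomes of length $w$ for the light-real coin, of which $J_{w+1}$ begin with $=$. Under the hypotheses $r \le J_{w+1}$ and $l+r \le J_{w+2}$, I would carry out the case analysis sketched above — (i) $r$ odd, (ii) $r$ even, (iii) $l$ and $r$ both odd — to produce an assignment of outcomes to the $l$-coins and $r$-coins with the property that the outcomes assigned to the $r$-group all start with $=$, and that, within the assignment, outcomes come in conjugate pairs except possibly for the self-conjugate outcome $==\cdots=$ and the pair $\{<==\cdots, >==\cdots\}$, both of which are handled so that conjugate partners never land in different groups in a way that breaks the argument.

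Next I would translate outcomes into itineraries using the two rules from the preceding subsection: the ``starting state light'' rule (itinerary letter in position $i$ determined by symbols $i{-}1$ and $i$) for coins in the $l$-group, and the ``starting state real'' rule (itinerary letter in position $i$ determined by symbols $i$ and $i{+}1$) for coins in the $r$-group. Two things need to be checked here. First, legitimacy of the weighing strategy: since conjugate outcomes generate conjugate itineraries under each rule, and the outcome assignment was built to be (essentially) self-conjugate, the resulting set of itineraries is self-conjugate, so by Corollary~\ref{cor:scstr} it yields a legitimate strategy — I would need to double-check that the two special outcomes in case (iii) still produce a self-conjugate itinerary set (the self-conjugate outcome maps to the all-O itinerary, and $<==\cdots$ under the $l$-rule maps to $\mathrm{LL O\cdots O}$ or $\mathrm{LO\cdots O}$, whose conjugate is the image of $>==\cdots$). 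Second, correctness: if the strategy produces outcome $x$, then a fake $l$-coin must have the unique itinerary the $l$-rule assigns to $x$, and a fake $r$-coin must have the unique itinerary the $r$-rule assigns to $x$; since each such outcome was assigned to at most one coin in its respective group, and the rules are deterministic, the fake coin is pinned down. The key point to articulate is the one flagged in the paragraph before the theorem: although an outcome does not determine a self-itinerary uniquely in general, \emph{given the starting state and the itinerary the outcome is determined}, so running the strategy and reading off which (group, itinerary) pair is consistent with the observed outcome identifies the coin.

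The main obstacle — and the part deserving the most care — is the parity bookkeeping in the boundary case where $l$ and $r$ are both odd and $l+r$ is as large as the bounds allow. There the naive ``pair up conjugate outcomes'' scheme leaves two unpaired outcomes (the self-conjugate one and one imbalance-leading outcome), and one must check that distributing them as $\{$self-conjugate $\to r$-group, $<==\cdots \to l$-group$\}$ is both feasible (i.e. the counts $r \le J_{w+1}$ and $l+r\le J_{w+2}$ actually leave room, using that the number of $=$-leading outcomes has the right parity) and compatible with self-conjugacy of the itinerary set. I would verify that $J_{w+1}$ and $J_{w+2}$ have the parities needed so that ``number of $r$-coins after removing the self-conjugate one'' is even and ``number of $l$-coins after removing $<==\cdots$'' is even, mirroring the parity argument used in Section~\ref{sec:lh-unknown}; this is a short computation with the closed form $J_n=(2^n-(-1)^n)/3$. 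Once that is nailed down, the rest is the routine verification that conjugation commutes with both itinerary-construction rules and that each rule inverts correctly on the set of outcomes actually used.

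\begin{proof}
By the outcome count, there are $J_{w+2}$ possible outcomes of length $w$, of which exactly $J_{w+1}$ begin with the symbol $=$. Using the hypotheses $r \le J_{w+1}$ and $l+r \le J_{w+2}$, assign outcomes to coins as described above: outcomes beginning with $=$ go to the $r$-group and the remaining outcomes to the $l$-group, with outcomes grouped into conjugate pairs within each group whenever possible. If $r$ is odd, give the self-conjugate outcome $==\cdots=$ to an $r$-coin and the rest of the $=$-leading outcomes in conjugate pairs. If $r$ is even, keep the self-conjugate outcome for a possible extra $l$-coin. If both $l$ and $r$ are odd, give $==\cdots=$ to an $r$-coin and $<==\cdots=$ to an $l$-coin, and pair the remainder. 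A parity check using $J_n=(2^n-(-1)^n)/3$ confirms that in each case the leftover coins can be grouped into conjugate pairs.

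Now translate each outcome to an itinerary: use the ``light'' rule for $l$-coins and the ``real'' rule for $r$-coins. As noted, both rules send conjugate outcomes to conjugate itineraries; moreover the self-conjugate outcome maps to the all-O itinerary, and $<==\cdots=$ and $>==\cdots=$ map to a conjugate pair of itineraries. Hence the resulting set of itineraries is self-conjugate, and by Corollary~\ref{cor:scstr} it defines a legitimate weighing strategy.

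Run this strategy and let $x$ be the observed outcome. If the fake coin is an $l$-coin, its state sequence is forced by $x$ (light start, alternating, with imbalances marking which pan), so its self-itinerary is exactly the one the ``light'' rule assigns to $x$; since at most one $l$-coin was assigned that outcome, the coin is identified. Likewise, if the fake coin is an $r$-coin, its self-itinerary is the one the ``real'' rule assigns to $x$, and at most one $r$-coin has it. In either case the fake coin is uniquely determined, so the strategy succeeds in $w$ weighings.
\end{proof}
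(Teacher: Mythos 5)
Your overall architecture matches the paper's: assign outcomes ($=$-leading ones to the $r$-group), translate to itineraries via the ``light'' rule for $l$-coins and the ``real'' rule for $r$-coins, and use self-conjugacy of the itinerary set to get a legitimate strategy. But there is a genuine gap in exactly the case you flagged as delicate, the one with $l$ and $r$ both odd, and your resolution of it does not work. You give $<==\cdots=$ to the extra $l$-coin and translate it with the light rule, which yields the itinerary $\mathrm{LLO\cdots O}$. Its conjugate $\mathrm{RRO\cdots O}$ is the image of $>==\cdots=$, but that outcome is assigned to \emph{no} coin (its conjugate $<==\cdots=$ is already taken, so it cannot be used in a conjugate pair), and the extra $r$-coin's outcome $==\cdots=$ translates to the all-O itinerary. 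So the itinerary set is not self-conjugate, Corollary~\ref{cor:scstr} does not apply, and in fact the first two weighings have one more coin on the left pan than on the right: the strategy is not legitimate. The paper's fix is to abandon the translation rule for precisely these two coins and hand them the conjugate pair $\mathrm{LOO\cdots O}$ (extra $l$-coin: light on the left pan once, still realizing $<==\cdots=$) and $\mathrm{ROO\cdots O}$ (extra $r$-coin: real on the right pan once, so the weighing still balances and the outcome is still $==\cdots=$). Your proof needs this step or an equivalent one; as written, the sentence ``hence the resulting set of itineraries is self-conjugate'' is false in the odd--odd case.

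A secondary, more cosmetic issue: in the correctness paragraph you assert that the fake $l$-coin's ``self-itinerary is exactly the one the light rule assigns to $x$,'' i.e., that the outcome forces the itinerary. It does not (a coin in the real state may or may not be on the scale during a balanced weighing); the correct direction, which the paper uses, is that the starting state together with the assigned itinerary determines the outcome uniquely, so the observed outcome can be consistent with at most one assigned coin. Your argument is repairable by reversing this implication, but note that once the two special coins receive the hand-assigned itineraries above, the ``rule inverts correctly'' framing no longer literally applies to them and the state-plus-itinerary-determines-outcome argument is the one that covers all coins uniformly.
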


\begin{proof}
We already described the assignment of the outcomes above. Now we need to assign itineraries. If $lr$ is even, then we match the self-conjugate outcome (if it is assigned) with the self-conjugate itinerary. Other outcomes are assigned in conjugate pairs to the coins in the same group. That means we can assign to the two coins a pair of conjugate itineraries that match the pair of conjugate outcomes.

In a special case of $lr$ being odd, we assign the itinerary LOOO$\ldots$ to the extra coin in the $l$-group and ROOO$\ldots$ to the extra coin in the $r$-group. The sets of itineraries are paired in such a way that they correspond to a legitimate oblivious weighing strategy.

Suppose we use this strategy and get an outcome. Each coin has its own starting state and, together with the itinerary, matches an outcome uniquely. That means only one coin can match a given outcome, and this coin is found.
\end{proof}

\subsection{Starting state is unknown}

We know that there are $J_{w+2}$ possible outcomes of $w$ weighings. The light-real coin starting in the real state will have an outcome starting with symbol $=$. This means that the number of possible coins we can process is not more than $J_{w+1}$.

This bound is precise for an adaptive strategy, as was shown in \cite{STEPAlternators}:

\begin{lemma}
If the number of coins $N$ is in the range $J_w < N \leq J_{w+1}$, then the light-real coin with an unknown starting state can be found in $w$ weighings, using an adaptive strategy.
\end{lemma}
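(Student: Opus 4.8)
The plan is to prove this by induction on $w$, reducing the unknown-state problem, after the first weighing, to two subproblems: a known-\emph{real}-state problem (controlled by the bound $J_{v+1}$ in $v$ weighings, stated above and established in \cite{STEPAlternators} and in the oblivious theorem of this section) and a \emph{mixed} problem involving coins known to be ``light-if-fake'' together with coins still in the unknown state. The strengthening that I would actually carry through the induction is the following sub-claim: for every $v\ge 0$, a pile of $J_{v+2}-1$ coins in the known light state together with one extra coin in the unknown state can be processed in $v$ weighings. The key structural fact behind this is that a light-if-fake coin \emph{announces itself}: once it is on the scale, if it is the fake then its pan is the lighter one, so a balanced weighing exonerates every light-if-fake coin that participated in it.

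For the main statement, given $N\le J_{w+1}$ unknown coins I would let the first weighing put $k:=\lfloor N/2\rfloor$ coins on each pan (all Jacobsthal numbers $J_n$ with $n\ge 1$ are odd, so this uses at most $J_{w+1}-1$ coins and leaves at most one coin aside; one checks $k\le\lfloor J_{w+1}/2\rfloor\le J_w$ from $J_{w+1}=2J_w+(-1)^w$). If the weighing is unbalanced, the fake is among the $k$ coins of the lighter pan; it was light and has just flipped to real, so we have at most $k\le J_w$ coins in the known real state with $w-1$ weighings left, which suffices by the known-real bound. If it is balanced, the $2k$ weighed coins are now all light-if-fake and the at most one leftover coin is still unknown; this is exactly the configuration of the sub-claim with $v=w-1$ (here $2k\le J_{w+1}-1=J_{(w-1)+2}-1$), so, using that feasibility is monotone downward in the number of light-if-fake coins, we are done.

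The sub-claim itself I would prove by induction on $v$; the cases $v\le 1$ are immediate (for $v=0$: zero light coins and one unknown coin means that coin is the fake). For the inductive step, weigh $J_v$ of the light coins against $J_v$ others, leaving aside the unknown coin and $J_{v+2}-1-2J_v=J_{v+1}-1$ light coins (using $J_{v+2}=J_{v+1}+2J_v$). An imbalance localizes the fake to the $J_v$ coins of the lighter pan, now in the known real state, and $J_v$ is exactly the known-real bound for $v-1$ weighings; a balance exonerates all $2J_v$ weighed coins and leaves $J_{v+1}-1$ light coins plus one unknown coin with $v-1$ weighings, which is precisely the sub-claim one level down.

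I expect the main obstacle to be spotting the correct strengthening rather than any hard computation: the naive induction on ``$N$ unknown coins'' does not close because a balanced weighing unavoidably produces a mixed pile, and the delicate point is that carrying a \emph{single} leftover unknown coin is exactly compatible with exonerating enough light-if-fake coins at each step so that the mixed subproblem regenerates itself — and it is the identity $J_{v+2}=J_{v+1}+2J_v$ that makes the arithmetic line up. The remaining ingredients (oddness of $J_n$, the inequality $(J_{w+1}-1)/2\le J_w$, and downward monotonicity of feasibility in the number of light-if-fake coins, needed to absorb the cases $N<J_{w+1}$ with the same first weighing) are routine. Optimality requires nothing new: a real-state start forces the first outcome symbol to be ``$=$'', so by Theorem~\ref{thm:itb} no more than $J_{w+1}$ coins can be handled in $w$ weighings.
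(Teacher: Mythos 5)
The paper gives no proof of this lemma at all: it is quoted from \cite{STEPAlternators} with only a citation, so there is no in-paper argument to compare yours against. Your proof is correct and self-contained. The decomposition is natural: a first weighing of $\lfloor N/2\rfloor$ against $\lfloor N/2\rfloor$ turns an imbalance into a known-real-state problem on at most $\lfloor J_{w+1}/2\rfloor\le J_w$ suspects (handled in $w-1$ weighings by the known-state result of this section), and turns a balance into the invariant configuration ``at most $J_{v+2}-1$ light coins plus at most one unknown coin, $v$ weighings,'' which your sub-claim regenerates via $J_{v+2}=J_{v+1}+2J_v$: weighing $J_v$ light coins against $J_v$ light coins either localizes the fake to $J_v$ real-state suspects---exactly the $(v-1)$-weighing real-state bound $J_v$---or exonerates all $2J_v$ of them. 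The arithmetic all checks ($\lfloor J_{w+1}/2\rfloor\le J_w$ from $J_{w+1}=2J_w+(-1)^w$; $2\lfloor N/2\rfloor\le J_{w+1}-1$ by oddness of $J_{w+1}$), and the small cases $w=2,3,5$ reproduce the values $3,5,21$ quoted elsewhere in the paper. Two points you should make explicit rather than wave at. First, state the sub-claim for \emph{at most} $J_{v+2}-1$ light coins and at most one unknown coin, and take the weighing size to be $\min\{J_v,\lfloor m/2\rfloor\}$; both branches still close, and this is what actually replaces the appeal to ``downward monotonicity,'' which is not automatic at a weighing where no coins have yet been proven genuine and hence none are available for padding. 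Second, your closing optimality remark misattributes the converse bound: Theorem~\ref{thm:itb} only gives $(J_{w+2}+1)/2$, which equals $J_{w+1}+1$ for odd $w$; the correct argument is the one in the paragraph preceding the lemma (a real-state start forces the first outcome symbol to be $=$, and there are only $J_{w+1}$ such outcomes). Neither point affects the existence direction, which is all the lemma asserts.
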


Here we would like to discuss ideas of how to build an oblivious strategy. Suppose such a strategy existed; then, every coin would have an itinerary. In addition, every outcome could be matched to an itinerary and state of a coin. Table~\ref{table:2wI2c} describes this matching for two weighings. The word 'light/real' is placed in a cell if the coin starting in the light(real) state can have the corresponding itinerary with the corresponding outcome.

\begin{table}[h!]
\centering
\begin{tabular}{| c | c | c | c | c | c |} 
 \hline
  & $<=$ & $=<$ & $==$ & $=>$ & $>=$ \\ 
 \hline
 LL & light & real & & & \\ 
 LO & light & & real & & \\ 
 LR & light & & & real & \\ 
 OL & & light & real & & \\ 
 OO & & & light real & & \\ 
 OR & & & real & light & \\ 
 RL & & real & & & light \\ 
 RO & & & real & & light \\ 
 RR & & & & real & light \\ 
 \hline
\end{tabular}
\caption{Matching of itineraries for 2 weighings}
\label{table:2wI2c}
\end{table}

If we know the itinerary and the starting state, the outcome is uniquely defined. That means each row has exactly one of each word: `light' and `real'. These two words share a cell in the table if and only if the cell matches the self-conjugate itinerary with the self-conjugate outcome.

We can represent this table as a bipartite graph with two-colored edges. Vertices correspond to outcomes and itineraries, while edges are colored according to the initial state. Two vertices are connected by an edge of a particular color if the corresponding word is at the intersection of the row and column matching this outcome and itinerary. Choosing a strategy means choosing a subset of itineraries. That is, we pick an induced subgraph corresponding to this set of itineraries. In this subgraph, an outcome must be connected to no more than one itinerary.

In our example, we want to find an oblivious strategy for three coins, as this is the theoretical maximum. This means that we need at least five outcomes, each connected to a different itinerary. We must use the $==$ outcome which points to the OO itinerary. That means we cannot use LO, OL, OR, or RO itineraries, as they produce the $==$ outcome while in the real state. We can use no more than one itinerary from the pair LL---LR, as they share the first column. Similarly, we can only use one from RL---RR. We can build our itineraries in two ways: LL, OO, RR, or LR, OO, RL.

Let us introduce another nomenclature here. We will use the $\neq$ sign to represent an imbalance $<$ or $>$. If a coin corresponds to the outcome $\neq =$, then it has to be in the light state. Then, in the real state, the same coin must have outcome $= \neq$ or $==$. The $==$ outcome is already assigned to a coin that is never on the scale. That means we need to match $\neq =$ to $=\neq$ for the same coin. If these two outcomes represent the same coin in different states, then the coin does not have the letter O in its itinerary. For simplicity's sake, we match the second letter to the first in an itinerary. So, we pick LL, OO, RR.

We present the solution in Table~\ref{table:2wi2o}.

\begin{table}[h!]
\centering
\begin{tabular}{| c | c | c |} 
 \hline
  & light & real\\ 
 \hline
 LL & $<=$ & $=<$ \\ 
 OO & $==$ & $==$ \\ 
 RR & $>=$ & $=>$ \\
 \hline
\end{tabular}
\caption{Matching of itineraries to outcomes for two weighings}
\label{table:2wi2o}
\end{table}

The important thing here is that different outcomes correspond to different coins, and, with one exception, an outcome uniquely defines a coin's state as well.

Let us use this language to find an oblivious strategy for three weighings. We have the following groups of outcome patterns: $\neq = \neq$, $\neq ==$, $= \neq =$, $== \neq$, $===$. If a coin corresponds to the outcome $\neq = \neq$, then it has to start in the light state, and must always be on the scale. There are 4 possible outcomes like this. If the same coin starts in the real state, then the outcome must be $= \neq =$. There are only 2 possible outcomes like this. We need to pair two itineraries (for the light and real starting state) to the same coin. This means we can only match these itineraries to two coins. Let us choose itineraries LLL and RRR for these outcomes. 

We can match the outcome pattern $\neq ==$ for the light starting state to $== \neq$ for the real starting state. The corresponding itineraries have to have O in second place, and, for consistency, we will use a repeated letter for an unbalanced weighing. The itineraries are LOL and ROR. There is also the OOO itinerary.

Let us look at the matching Table~\ref{table:3wi2o}:

\begin{table}[h!]
\centering
\begin{tabular}{| c | c | c |} 
 \hline
  & light & real\\ 
 \hline
 LLL & $<=<$ & $=<=$ \\ 
 LOL & $<==$ & $==<$ \\ 
 OOO & $===$ & $===$ \\ 
 ROR & $>==$ & $==>$ \\
 RRR & $>=>$ & $=>=$ \\
 \hline
\end{tabular}
\caption{Matching of itineraries to outcomes for three weighings}
\label{table:3wi2o}
\end{table}

Thus, we get an oblivious weighing strategy for three weighings and 5 coins. If we number the coins corresponding to rows in Table~\ref{table:3wi2o}, the first weighing must compare coins 1 and 2 versus coins 4 and 5. The second weighing compares coins 1 and 5, and the third weighing is the same as the first one. If we want to process four coins in three weighings, we can use the same strategy, ignoring the third coin.

Let us move to four weighings. We start with matching patterns in Table~\ref{table:4wop}:

\begin{table}[h!]
\centering
\begin{tabular}{| c | c |} 
 \hline
light & real \\
\hline
$\neq = \neq =$ & $= \neq = \neq$ \\ 
$\neq == \neq$ & $=\neq ==$ and $==\neq =$ \\ 
$\neq ===$ & $===\neq$ \\ 
$====$ & $====$ \\ 
 \hline
\end{tabular}
\caption{Outcome pattern matching for four weighings}
\label{table:4wop}
\end{table}

We will assign itineraries so that if an itinerary of a coin matches the pattern in the first column for the light state, the same coin will have the pattern in the same row in the next column for the real state. Now we need to assign itineraries so that they balance. For the first row in Table~\ref{table:4wop}, we get: LLLL, LLRR, RRLL, and RRRR. For the second row: LOLL, RORR, LLOL, and RROR. For the third row: LOOL and ROOR. In the last row we get OOOO. Note that the first and third groups have itineraries in conjugate pairs.

The final matching of itineraries to outcomes is in Table~\ref{table:4wi2o}.

\begin{table}[h!]
\centering
\begin{tabular}{| c | c | c |} 
 \hline
  & light & real\\ 
LLLL    &      $<=<=$    &      $=<=<$ \\
LLRR     &     $<=>=$    &      $=<=>$ \\
RRLL     &     $>=<=$    &      $=>=<$ \\
RRRR     &     $>=>=$    &      $=>=>$ \\
LOLL     &     $<==<$    &      $==<=$ \\
RORR     &     $<==>$    &     $ ==>=$ \\
LLOL     &     $>==<$    &      $=<==$ \\
RROR     &     $>==>$    &      $=>==$ \\
LOOL    &      $>===$    &      $===<$ \\
ROOR    &      $<===$    &      $===>$ \\
OOOO   	 &     $====$    &      $====$ \\
 \hline
\end{tabular}
\caption{Matching of itineraries to outcomes for four weighings}
\label{table:4wi2o}
\end{table}

Note that because we have one self-conjugate itinerary and all other itineraries are in conjugate pairs, we can have an oblivious strategy for any number of coins below 11.

So far, we have built an oblivious strategy that is as powerful as an adaptive strategy for any number of weighings up to and including four. When we move to five weighings, the situation changes. We have 43 possible outcomes and, as was shown in \cite{STEPAlternators}, the maximum possible number of coins that can be processed in an adaptive strategy is 21. An adaptive strategy for any number of coins up to 21 was also described in the same paper \cite{STEPAlternators}. 

We will now prove that we cannot have an oblivious strategy with more than 20 coins. 

Consider the pattern $\neq = \neq = \neq$. There are 8 outcomes with this pattern. They must correspond to coins starting in the light state and going on the scale all 5 times. These coins in the real state will have an outcome with pattern $= \neq = \neq =$. There are only 4 possible outcomes like this. Therefore, we can only match all these outcomes to no more than 4 coins. Thus, 4 out of 8 outcomes of pattern $\neq = \neq = \neq$ must be unmatched. We will have 39 outcomes left, which can be matched to no more than 20 coins.

We describe an explicit oblivious strategy for 20 coins in Appendix~\ref{app:al5w20c}. By subtracting the necessary number of conjugate itineraries, or the self-conjugate itinerary, we get an oblivious strategy for any number of coins below 20.

There will be a similar problem with more weighings when the number of weighings is odd. For $2k+1$ weighings, there are $2^{k+1}$ alternating outcomes that start with an imbalance, versus $2^k$ alternating outcomes that start with a balance. Thus $2^k$ outcomes cannot be matched. We also have to subtract $2^{k-1}$ coins from the information-theoretical bound. That means that for $2k+1$ weighings the number of coins that can be processed in an oblivious strategy is not more than $(J_{2k+3} - 2^k +1)/2$.

What happens if the number of weighings is even? A similar problem arises. For 6 weighings, the outcome patterns $\neq = \neq == \neq$ and $\neq == \neq = \neq$ can only be for coins starting in the light state. The same coins in the real state can have outcomes $= \neq = \neq ==$, $= \neq == \neq =$, and $== \neq = \neq$. But there are 16 outcomes on the one hand versus 12 on the other.

For an even number of weighings, $2k$, the number of outcomes that start and end with an imbalance and have $k$ imbalances is $(k-1)2^k$. The corresponding outcomes for the real state must have $(k-1)$ imbalances, and have to start and end with balances. There are $k2^{k-1}$ of them. The difference is $(k-1)2^k - k2^{k-1} = (2k-2 - k) 2^{k-1} = (k-2) 2^{k-1}$. This means that for an even number of weighings---that is at least 4---the number of coins that can be processed in an oblivious strategy is not more than $(J_{2k+2} - (k-2) 2^{k-1} +1)/2$.

We see that, for the unknown state---at five weighings and up---oblivious strategies are less powerful than adaptive strategies.

According to our new adjusted bound, the number of coins that can be processed in $n$ weighings---where $n$ starts from 0---is not more than the following:

\[1,\ 1,\ 3,\ 5,\ 11,\ 20,\ 41,\ 82,\ 163,\ \ldots.\]

An oblivious strategy for 6 weighings and 41 coins is presented in Appendix~\ref{app:al6w41c}, and for 7 weighings and 82 coins---in Appendix~\ref{app:al7w82c}.

\section{A Light-Heavy-Real Coin. Known Starting state.}\label{sec:3stateknown}

We are given $N$ identical-looking coins. All but one coin are real and weigh the same. One coin is special, and is called \textit{the 3-state coin}. It can change its weight to mimic three different types of coins: a real coin, a fake coin that is lighter than a real one, and a fake coin that is heavier than a real one. The 3-state coin switches its behavior in a periodic way. The pattern can be light-heavy-real and so on, or heavy-light-real and so on. Invoking symmetry, we can study only one of two patterns. Let us say that our 3-state coin switches its weight from lighter to heavier to real, and back to lighter. We call this coin the LHR-coin.

\subsection{Outcomes}

Our method is to match outcomes to coins. Similarly to light-real coins, not every outcome is possible. If our coin is present on the scale in the heavy state, then the next weighing must balance. Indeed, our coin will be in the real state. Therefore, whether or not our coin is on the scale, the scale will balance. 

As before, we consider patterns of outcomes where we replace imbalance symbols $<$ and $>$ with an inequality sign.

\begin{lemma}
The outcome patterns must follow the following rules, depending on the starting state of the LHR-coin and the parity of an imbalance in the outcome string:
\begin{itemize}
\item If the starting state is light, then every even imbalance that is not at the end must be followed by $=$.
\item If the starting state is heavy, then every odd imbalance that is not at the end must be followed by $=$.
\item If the starting state is real, then every even imbalance that is not at the end must be followed by $=$. In addition, the first symbol of the outcome must be $=$.
\end{itemize}
\end{lemma}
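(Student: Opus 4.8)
The plan is to track the state of the LHR-coin weighing by weighing, keyed to how many times the coin has so far been placed on the scale. The key observation is that the state of the coin the $k$-th time it is weighed is determined by $k$ and the starting state alone: if the starting state is light, the coin is light on the 1st, 4th, 7th, $\ldots$ on-scale weighings, heavy on the 2nd, 5th, 8th, $\ldots$, and real on the 3rd, 6th, 9th, $\ldots$; analogously with a cyclic shift for the heavy and real starting states. In particular, a weighing produces an imbalance only when the fake coin is on the scale and in the light or heavy state, never in the real state. So every imbalance in the outcome corresponds to an on-scale weighing at which the coin is light or heavy.

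First I would make precise the link between the \emph{parity of an imbalance in the outcome string} and the \emph{parity of the on-scale count}. This is the crux of the argument, and I expect it to be the main obstacle, because a priori the position of an imbalance in the outcome is not the same as the number of times the coin has been weighed — the coin may sit out some weighings ($=$ with the coin off the scale) and there may be balanced weighings with the coin on the scale (the coin in the real state). The claim I want is: the imbalances in the outcome, read left to right, alternate in the coin's state between light and heavy, with the very first imbalance being in the light state if the starting state is light or real, and in the heavy state if the starting state is heavy. To see this, note that between two consecutive imbalances the coin passes through on-scale weighings in states that cycle light $\to$ heavy $\to$ real $\to$ light $\to \cdots$; an imbalance happens exactly at the light and heavy steps, so consecutive imbalances are in opposite states (light then heavy, heavy then light, $\ldots$) regardless of how many real-state on-scale weighings or off-scale weighings intervene. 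Hence the odd-numbered imbalances in the outcome are all in one state and the even-numbered ones all in the other, and which is which is pinned down by the starting state: starting light or real, the first time the coin is weighed in a non-real state it is light; starting heavy, it is heavy.

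Next I would use the "heavy forces a balance" rule. If an imbalance in the outcome is in the heavy state, then the very next on-scale weighing of the coin finds it in the real state, so that weighing is balanced whether or not the coin is on it; more to the point, \emph{every} weighing strictly between this heavy weighing and the coin's next on-scale weighing is also balanced (the coin is off the scale), and the next on-scale weighing is balanced too (real state). Therefore the symbol in the outcome immediately following a heavy-state imbalance is $=$ — unless that imbalance is the last symbol of the string, in which case there is nothing to constrain. Combining with the parity bookkeeping of the previous paragraph: for a light start, the heavy-state imbalances are exactly the even-numbered imbalances, so every even imbalance not at the end is followed by $=$; for a heavy start, the heavy-state imbalances are the odd-numbered ones, giving the second bullet; for a real start, the even imbalances are again the heavy ones, giving the "even imbalance followed by $=$" part, and additionally the coin starts in the real state, so the first weighing is balanced no matter the coin's location, forcing the first symbol of the outcome to be $=$.

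Finally I would note that the three bullets of the lemma are exactly these three cases, so assembling the above completes the proof. I would keep the write-up short: state the on-scale-count-to-state correspondence, prove the alternation-of-imbalance-states claim (the one genuinely fiddly point), invoke the heavy-forces-balance rule, and read off the three items. No heavy computation is needed; the only care required is in the alternation claim, making sure that interleaved real-state and off-scale weighings do not disturb the light/heavy alternation of the visible imbalances.
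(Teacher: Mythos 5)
Your proposal is correct and follows the same reasoning the paper uses: the paper states this lemma without a formal proof, relying only on the preceding observation that an on-scale heavy-state weighing forces the next weighing to balance, and your argument is exactly that observation together with the (correct) parity bookkeeping showing that the imbalances alternate between light and heavy states regardless of intervening real-state or off-scale weighings. The only difference is that you make explicit the step the paper leaves implicit, namely that the $k$-th imbalance of the outcome is the $k$-th non-real on-scale appearance of the coin, so its state is determined by $k$ and the starting state.
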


The possible number of outcomes of length $n$ with these properties provides an upper bound for the number of coins that can be processed in $n$ weighings. Let us denote the sets of possible outcomes for light, heavy, and real starting states of length $n$ as $\mathcal{L}_n$, $\mathcal{H}_n$, and $\mathcal{R}_n$ correspondingly. We also denote the total number of such outcomes as $L_n$, $H_n$, and $R_n$ correspondingly.

We can express the total number of outcomes for $n$ through the number of outcomes for $n-1$.

\begin{lemma}\label{lemma:lhr-recursion}
$$L_n = L_{n-1} +2 H_{n-1},$$
$$H_n = H_{n-1} +2 R_{n-1}.$$
$$R_n = L_{n-1}.$$
\end{lemma}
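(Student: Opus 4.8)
The plan is to prove each of the three recursions by conditioning on the first symbol of an outcome string of length $n$ and tracking how the ``state'' of the coin evolves. The key observation is that the constraint in the preceding lemma is \emph{local}: whether a given symbol is a legal imbalance depends only on the starting state and the parity of that imbalance's position among the imbalances seen so far, or equivalently on which of the three phases (light, heavy, real) the coin would be in at that weighing. So I would first recast the description: a string in $\mathcal{L}_n$ is a string of length $n$ over $\{=,<,>\}$ such that reading left to right, each imbalance alternates the coin through its cycle, and an imbalance in the heavy phase forces the next symbol (if any) to be $=$. The crucial point is that after reading the first symbol of a string counted by $L_n$, the remaining $n-1$ symbols form a string counted by $L_{n-1}$, $H_{n-1}$, or $R_{n-1}$ depending on whether that first symbol left the coin in the light, heavy, or real phase.

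The second step is the case analysis for $L_n$. If the first symbol is $=$, the coin stays in the light phase and the remaining suffix of length $n-1$ must lie in $\mathcal{L}_{n-1}$; that contributes $L_{n-1}$. If the first symbol is an imbalance ($<$ or $>$, hence $2$ choices), the coin was light and is now heavy, so the suffix must lie in $\mathcal{H}_{n-1}$ — here I must check that the ``heavy imbalance must be followed by $=$'' constraint for the \emph{first} symbol is automatically absorbed: it is, precisely because a string in $\mathcal{H}_{n-1}$ which begins with $=$ is allowed and a string beginning with an imbalance is \emph{not} allowed when the next would be forced to be equal — wait, this needs care. Actually the right framing is that the constraint ``this imbalance is followed by $=$'' is not a constraint on the suffix's membership in $\mathcal{H}_{n-1}$; rather it is exactly the defining constraint that $\mathcal{H}_{n-1}$ encodes via its \emph{first} symbol. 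So the heavy-phase coin's first move being forced to $=$ when followed by another symbol is precisely what ``heavy starting state'' means, and I should simply verify that the definition of $\mathcal{H}_m$ in the lemma already builds in ``if $m\geq 1$ and there is a symbol after position $1$... '' — i.e.\ that an odd imbalance (the first position is odd) not at the end must be followed by $=$. Yes: position $1$ is odd, so for $\mathcal{H}$ a leading imbalance not at the end forces the next symbol to be $=$, which is exactly the inherited constraint. So $L_n = L_{n-1} + 2H_{n-1}$.

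The third step handles $H_n$ and $R_n$ by the same conditioning. For $H_n$: first symbol $=$ keeps the coin heavy, suffix in $\mathcal{H}_{n-1}$, giving $H_{n-1}$; first symbol an imbalance (position $1$, odd, which for the heavy starting state is the ``forced'' parity) moves the coin from heavy to real. If $n\geq 2$ the leading imbalance must be followed by $=$, but note that $\mathcal{R}_{n-1}$ is by definition exactly the set of strings whose first symbol is $=$ and which otherwise obey the even-imbalance rule starting from the real phase — so the suffix of length $n-1$ lies in $\mathcal{R}_{n-1}$, contributing $2R_{n-1}$; and if $n=1$ the single imbalance is ``at the end'' and legal, matching $2R_0 = 2$ correctly since $R_0 = 1$ (empty string). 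Hence $H_n = H_{n-1} + 2R_{n-1}$. For $R_n$: by definition the first symbol must be $=$, and then the coin is in the real phase, which is the start of a fresh light-cycle — reading on, after this forced $=$ the coin behaves exactly as a light-starting coin on the remaining $n-1$ symbols. So the suffix lies in $\mathcal{L}_{n-1}$ and $R_n = L_{n-1}$, with the only subtlety being to confirm that ``real starting state'' after one weighing is genuinely identical to ``light starting state'' — it is, because the cycle is light $\to$ heavy $\to$ real $\to$ light, so the phase following ``real'' is ``light,'' and the parity bookkeeping resets consistently.

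The main obstacle, and the thing I would be most careful about, is the boundary between ``an imbalance that is not at the end must be followed by $=$'' and the recursive decomposition: I need the bijection between (legal length-$n$ string starting from state $X$) and (first symbol) $\times$ (legal length-$(n-1)$ string starting from the successor state) to be exact, with no double counting and no omitted forced-$=$ constraint. The clean way to discharge this is to prove once and for all a reformulation lemma — ``$s\in\mathcal{L}_n$ iff $s_1=\,=$ and $s_{2..n}\in\mathcal{L}_{n-1}$, or $s_1\in\{<,>\}$ and $s_{2..n}\in\mathcal{H}_{n-1}$'', and symmetric statements for $\mathcal{H}$ and $\mathcal{R}$ — which follows directly from the local/phase description of the constraint, and then the three numeric recursions are immediate by summing. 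I expect the verification of the small base cases ($n=1$, using $L_0=H_0=R_0=1$) to be the only genuine computation, and it is routine.
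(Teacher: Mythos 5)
Your proposal is correct and follows essentially the same route as the paper: condition on the first symbol, note that an imbalance advances the coin to the next phase so the suffix is counted by the successor state's sequence, and observe that a real-start outcome is a forced $=$ followed by a light-start outcome. Your extra care about the parity shift of imbalances when stripping the first symbol, and the $n=1$ boundary case, is sound but only makes explicit what the paper's shorter argument leaves implicit.
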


\begin{proof}
If an $\mathcal{L}$-outcome starts with a balance, then the rest of the outcome string must be an $\mathcal{L}$-outcome. If it starts with an imbalance, then the coin switches to the heavy state and the rest of the outcome string must be an $\mathcal{H}$-outcome.

If an $\mathcal{H}$-outcome starts with a balance, then the rest of the outcome string must be an $\mathcal{H}$-outcome. If it starts with an imbalance, then the coin switches to the real state and the rest of the outcome string must be an $\mathcal{R}$-outcome.

An $\mathcal{R}$-outcome cannot start with an imbalance. The rest of the outcome string must be an $\mathcal{L}$-outcome.
\end{proof}

Initially, we have $L_0 = H_0 = R_0 = 1$. The lemma allows us to calculate the sequences $L_n$, $H_n$, and $R_n$:

\begin{itemize}
\item $L_n$: 1, 3, 9, 19, 41, 99, 233, 531, $\ldots$.
\item $R_n$: 1, 1, 3, 9, 19, 41, 99, 233, $\ldots$.
\item $H_n$: 1, 3, 5, 11, 29, 67, 149, 347,  $\ldots$.
\end{itemize}

We can express $L_n$, $H_n$, and $R_n$ as a third-order recursion.

\begin{lemma}
Sequences $L_n$, $H_n$, and $R_n$ follow the same recursion: $s_{n+1} = 2s_n - s_{n-1} + 4s_{n-2}$.
\end{lemma}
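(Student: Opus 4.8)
The plan is to treat the first-order system of Lemma~\ref{lemma:lhr-recursion} as a single vector recursion and extract the scalar recursion from it via the Cayley--Hamilton theorem. Writing $v_n = (L_n, H_n, R_n)^{\mathsf T}$, the three equations say exactly that $v_n = M v_{n-1}$, where
\[ M = \begin{pmatrix} 1 & 2 & 0 \\ 0 & 1 & 2 \\ 1 & 0 & 0 \end{pmatrix}. \]
First I would compute the characteristic polynomial of $M$ by expanding $\det(\lambda I - M)$ along the first row; this gives $(\lambda-1)^2\lambda - 4 = \lambda^3 - 2\lambda^2 + \lambda - 4$. Cayley--Hamilton then yields $M^3 = 2M^2 - M + 4I$, and multiplying by $M^{n-2}$ and applying the result to $v_0$ gives $v_{n+1} = 2 v_n - v_{n-1} + 4 v_{n-2}$ for all $n \ge 2$. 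Reading off each coordinate proves the claim for $L_n$, $H_n$, and $R_n$ simultaneously.

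If a fully self-contained argument is preferred, I would instead eliminate variables by hand. From $L_n = L_{n-1} + 2H_{n-1}$ one solves $H_{n-1} = (L_n - L_{n-1})/2$; substituting this into $H_n = H_{n-1} + 2R_{n-1}$ together with $R_{n-1} = L_{n-2}$ and clearing denominators produces $L_{n+1} = 2L_n - L_{n-1} + 4L_{n-2}$ directly. Since $R_n = L_{n-1}$ is merely an index shift of the $L$-sequence, and $H_n = (L_{n+1} - L_n)/2$ is a fixed linear combination of shifts of it, both $R_n$ and $H_n$ inherit the same recursion — any linear recurrence relation is preserved under index shifts and finite linear combinations of solutions.

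There is no real obstacle here beyond the arithmetic of the $3\times 3$ determinant and keeping the index ranges straight; I would close by checking the recursion against the initial segments listed above (for instance $41 = 2\cdot 19 - 9 + 4\cdot 3$ for $L$, $29 = 2\cdot 11 - 5 + 4\cdot 3$ for $H$, $19 = 2\cdot 9 - 3 + 4\cdot 1$ for $R$) as a sanity check, and noting that the recursion is valid for $n \ge 2$, with the common initial data $L_0 = H_0 = R_0 = 1$ (and the first computed terms) supplying enough information to determine each sequence uniquely.
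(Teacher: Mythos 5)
Your proposal is correct, and your second route (solve $2H_{n-1}=L_n-L_{n-1}$, substitute into $H_n=H_{n-1}+2R_{n-1}$ using $R_{n-1}=L_{n-2}$, then observe that $R_n$ and $H_n$ are shifts and linear combinations of the $L$-sequence) is essentially word for word the paper's own proof. Your lead route via the transition matrix $M$ and Cayley--Hamilton is a genuinely different packaging of the same fact: the paper's elimination is just Gaussian elimination done by hand, whereas you read the answer off the characteristic polynomial $\lambda^3-2\lambda^2+\lambda-4$ of the system; the computation $(\lambda-1)^2\lambda-4$ is right, and the matrix viewpoint has the small advantage of proving the recursion for all three coordinates at once without the ``linear combinations of shifts inherit the recurrence'' step. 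Both arguments are complete; your explicit remarks about the index range $n\ge 2$ and the numerical sanity checks ($41=2\cdot 19-9+4\cdot 3$, etc.) are details the paper omits but are consistent with its data.
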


\begin{proof}
From $L_n = L_{n-1} + 2H_{n-1}$, we get $2H_{n-1} = L_n - L_{n-1}$. Therefore, $4L_{n-2} = 4R_{n-1} = 2(H_{n+1} - H_n) = 
L_{n+1} - L_n -L_n + L_{n-1}$. That is 
$$L_{n+1} = 2L_n - L_{n-1} +4L_{n-2}.$$
Sequences $H_n$ and $R_n$ are linear combinations of the sequence $L_n$ with a shifted self. Therefore, they follow the same recursion.
\end{proof}

These sequences are called \textit{weighted Tribonaccis} with weights $(2,-1,4)$.

The approximate values of the roots of the weighted Tribonacci equation $x^3 = 2x^2 - x + 4$ are $ x = 2.3146, x = -0.157298 + 1.30515 i, x = -0.157298 - 1.30515 i.$ This gives us a growth estimate for the number of outcomes that grow approximately as powers of $2.3146$. Not surprisingly, this number is between 2 and 3. The number of coins that can be processed with one standard fake coin growth as a power of 3. Our case is worse than that. The number of coins that can be processed with one LR-coin grows as a power of 2. Our case is better than that. Indeed, when a coin in a real state is on the scale, we do not get any useful information; we just change the state of the coin. In a 3-state coin, the real state happens less often than in the case of the LR-coin.

For future reference, we would like to note the relative values of these sequences:

\begin{lemma} 
For $n > 1$
\[ R_n < H_n < L_n.\]
\end{lemma}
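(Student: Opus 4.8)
The plan is to prove the chain of inequalities $R_n < H_n < L_n$ for $n>1$ by induction on $n$, using the coupled first-order recursions from Lemma~\ref{lemma:lhr-recursion}, namely $L_n = L_{n-1}+2H_{n-1}$, $H_n = H_{n-1}+2R_{n-1}$, and $R_n = L_{n-1}$. The base case can be checked directly from the computed values: for $n=2$ we have $R_2=3$, $H_2=5$, $L_2=9$, so $R_2<H_2<L_2$; it is also worth recording $n=2$ and $n=3$ explicitly since the recursions reach back two steps and because the strict inequality fails at $n=1$ (there $R_1=1=H_1$), so the induction must genuinely start at $n=2$.

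For the inductive step, assume $R_k<H_k<L_k$ for all relevant $k\le n-1$ (and $k\ge 2$), together with the trivial monotonicity facts $R_k\le H_k\le L_k$ that also hold at $k=1$. First I would prove $H_n<L_n$: using $R_{n-1}=L_{n-2}$ (valid for $n\ge 2$) we get $H_n = H_{n-1}+2R_{n-1} = H_{n-1}+2L_{n-2}$, while $L_n = L_{n-1}+2H_{n-1}$. So $L_n - H_n = L_{n-1}+2H_{n-1}-H_{n-1}-2L_{n-2} = L_{n-1}-2L_{n-2}+H_{n-1}$. Here I would use $L_{n-1}=L_{n-2}+2H_{n-2}$ to rewrite $L_{n-1}-2L_{n-2} = 2H_{n-2}-L_{n-2}$, giving $L_n-H_n = 2H_{n-2}-L_{n-2}+H_{n-1}$. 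Since $H_{n-1}\ge H_{n-2}$ (the sequences are nondecreasing, which follows immediately from the recursions and positivity) and $L_{n-2}\le L_{n-1}$, one needs $H_{n-1} > L_{n-2}-2H_{n-2}$; but $L_{n-2} = L_{n-3}+2H_{n-3}\le 3H_{n-2}$ when $n-2\ge 2$... — this is where some care is required, and I may instead package the two inequalities together.

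The cleaner route, which I expect to use, is to prove the \emph{combined} invariant simultaneously by induction: show that $R_n<H_n$ and $H_n<L_n$ together, feeding each into the other through the recursions, rather than proving one at a time. For $R_n<H_n$: $H_n-R_n = H_{n-1}+2R_{n-1} - L_{n-1} = H_{n-1}+2R_{n-1} - R_n$ (using $R_n=L_{n-1}$), so one wants $H_{n-1}+2R_{n-1}>L_{n-1}$, i.e. $H_{n-1}+2R_{n-1}>R_{n-1}+2H_{n-2}$... after simplification this reduces to a comparison among earlier terms that the inductive hypothesis $R_{n-2}\le H_{n-2}\le L_{n-2}$ and $R_{n-1}<H_{n-1}$ should close. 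The main obstacle is purely bookkeeping: the three sequences interleave with a two-step lag, so I must be careful about which index the strict inequality is available at (it fails at index $1$), and I should state a slightly stronger helper inequality — for instance $L_{n-1}\le 3H_{n-1}$ and $H_{n-1}\le 3R_{n-1}$ for $n$ large enough, or equivalently bound consecutive ratios — so that each subtraction above comes out strictly positive. I would organize the final write-up as: (1) verify $n=2,3$ by table lookup; (2) note nondecreasingness of all three sequences; (3) prove the inductive step for $H_n<L_n$ and $R_n<H_n$ as above, substituting the recursions to reduce each to an inequality among strictly smaller indices covered by the hypothesis.
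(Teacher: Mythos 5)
Your proposal is correct and takes essentially the same route as the paper: a simultaneous induction on both inequalities driven by the recursions of Lemma~\ref{lemma:lhr-recursion}, with base cases read off the table of initial values (needed at $n=2$ and $n=3$, since the hypothesis is invoked at indices $n-1$ and $n-2$ and strictness fails at $n=1$). The one simplification you missed is that no rearrangement is needed at all: $L_n=L_{n-1}+2H_{n-1}>H_{n-1}+2R_{n-1}=H_n$ termwise from the hypothesis at $n-1$, and $H_n=H_{n-1}+2R_{n-1}>3R_{n-1}=3L_{n-2}>L_{n-2}+2H_{n-2}=L_{n-1}=R_n$ using the hypothesis at $n-2$, which is exactly how the paper closes both inequalities.
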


\begin{proof}
The proof is by induction, where the initial step is verified by observing the starting elements of these sequences above. Now, assume that the statement is true for $n-1$. It follows that $L_n = L_{n-1} + 2H_{n-1} > H_{n-1} +2R_{n-1} = H_n$. Analogously, $H_n=H_{n-1} + 2R_{n-1} > 3R_{n-1} = 3L_{n-2} > L_{n-2} + 2H_{n-3} = L_{n-1} = R_{n-1}$.
\end{proof}

We would also like to mention a powerful tool that allows us to prove linear inequalities relating to $L_n$ or $H_n$. Suppose $A$ is a vector in a $k$-dimensional space. For $n \geq k$ we denote as $A(L,n)$ (correspondingly $A(H,n$)) a dot product of $A$ with $(L_n, L_{n-1},\ldots,L_{n-k+1})$ (correspondingly $(H_n, H_{n-1},\ldots,H_{n-k+1})$.

\begin{lemma}\label{lemma:AB}
If there exists a value $m > k$, for which it is true that $A(L,m)\ + B(H,m)\ ?\ 0$ and $A(L,m-1)\ + B(H,m-1)\ ?\ 0$ and $A(H,m) + B(L,m-1)\ ?\ 0$, where ? represents the same equality sign or inequality sign, then for any $n >m$ it is true that $A(L,n)\ + B(H,n)\ ?\ 0$ and $A(H,n) + B(L,n-1)\ ?\ 0$.
\end{lemma}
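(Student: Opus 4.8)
The plan is to lift the recursions of Lemma~\ref{lemma:lhr-recursion} from the sequences themselves to the windowed dot products, and then run a short two-step induction. Write $\mathbf{L}_n=(L_n,L_{n-1},\dots,L_{n-k+1})$, and similarly $\mathbf{H}_n$ and $\mathbf{R}_n$, so that $A(L,n)=A\cdot\mathbf{L}_n$, and so on. The scalar identities $L_j=L_{j-1}+2H_{j-1}$, $H_j=H_{j-1}+2R_{j-1}$, $R_j=L_{j-1}$ hold for every $j\ge 1$ (one checks the bottom index against the listed initial values), so reading them off coordinatewise gives the vector identities $\mathbf{L}_n=\mathbf{L}_{n-1}+2\mathbf{H}_{n-1}$, $\mathbf{H}_n=\mathbf{H}_{n-1}+2\mathbf{R}_{n-1}$, $\mathbf{R}_n=\mathbf{L}_{n-1}$, valid as long as every window in sight has nonnegative indices. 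Dotting with the fixed vectors $A$ and $B$ then yields $A(L,n)=A(L,n-1)+2A(H,n-1)$ and $A(H,n)=A(H,n-1)+2A(L,n-2)$, together with the two analogous identities for $B$.

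Next I would package the two quantities of interest as $P_n:=A(L,n)+B(H,n)$ and $S_n:=A(H,n)+B(L,n-1)$. The three hypotheses at $m$ say precisely that $P_m\ ?\ 0$, $P_{m-1}\ ?\ 0$, and $S_m\ ?\ 0$, and the two desired conclusions at $n$ say $P_n\ ?\ 0$ and $S_n\ ?\ 0$. Substituting the operator-level recursions and regrouping gives the coupled recursion $P_n=P_{n-1}+2S_{n-1}$ and $S_n=S_{n-1}+2P_{n-2}$: the first by expanding $A(L,n)$ and $B(H,n)$ (the latter through $B(H,n)=B(H,n-1)+2B(L,n-2)$) and collecting the $(L,H)$ and $(H,L)$ parts; the second by expanding $A(H,n)$ via its recursion and $B(L,n-1)$ via the first recursion, then collecting. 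Since every coefficient here is nonnegative, the sign relation $?$ is preserved: a strong induction on $n\ge m+1$, with base cases $P_{m+1}=P_m+2S_m$ and $S_{m+1}=S_m+2P_{m-1}$, shows that whenever $P_j\ ?\ 0$ and $S_j\ ?\ 0$ for all indices already treated (including the seeds $P_m,P_{m-1},S_m$), we get $P_n=P_{n-1}+2S_{n-1}\ ?\ 0$ and $S_n=S_{n-1}+2P_{n-2}\ ?\ 0$.

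I do not expect a genuine obstacle; the content is bookkeeping, and the only delicate point is the index range. One should verify that the hypothesis $m>k$ --- so that $n\ge m+1\ge k+2$ in the conclusion --- is exactly what makes every window $\mathbf{L}_{n-j}$, $\mathbf{H}_{n-j}$, $\mathbf{R}_{n-j}$ used in deriving the recursions for $P_n$ and $S_n$ have all indices $\ge 0$, hence all the dot products $A(L,\cdot),A(H,\cdot),B(L,\cdot),B(H,\cdot)$ occurring in the argument well-defined. This is also why the conclusion is stated only for $n>m$, and why three seed inequalities (at $m$, at $m-1$, and the mixed one involving $A(H,m)+B(L,m-1)$) rather than two are needed to launch the induction. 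If one wishes, the same computation records that $P_n$ and $S_n$ each satisfy the weighted-Tribonacci recursion $s_{n+1}=2s_n-s_{n-1}+4s_{n-2}$, but only the nonnegativity of the two short recursions above is actually used.
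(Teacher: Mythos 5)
Your proposal is correct and follows essentially the same route as the paper's own proof: both expand $A(L,n)+B(H,n)$ and $A(H,n)+B(L,n-1)$ via the recursions of Lemma~\ref{lemma:lhr-recursion} (using $R_j=L_{j-1}$) and regroup into exactly the coupled relations $P_n=P_{n-1}+2S_{n-1}$ and $S_n=S_{n-1}+2P_{n-2}$, closing the induction with the three seeds. Your version merely makes the index bookkeeping and base cases more explicit than the paper does; no further changes are needed.
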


\begin{proof}
The proof is by induction. If the statement is true for $n-1$, then $A(L,n)\ + B(H,n) = A(L,n-1) + 2A(H,n-1) + B(H,n-1) + 2B(L,n-2) = (A(L,n-1) + B(H,n-1)) + 2(A(H,n-1) +  B(L,n-2))\ ?\ 0$. Similarly, $A(H,n) + B(L,n-1) = A(H,n-1) + 2A(L,n-2) + B(L,n-2) + 2B(H,n-2) =  (A(H,n-1) + B(L,n-2)) + 2(A(L,n-2) + B(H,n-2))  \ ?\ 0$.
\end{proof}

By putting $B=0$, we get the following corollary.

\begin{corollary}\label{cor:A}
If there exists a value $m > k$ for which it is true that $A(L,m)\ ?\ 0$ and $A(L,m-1)\ ?\ 0$ and $A(H,m)\ ?\ 0$, where ? represents the same equality sign or inequality sign, then for any $n >m$ it is true that $A(L,n)\ ?\ 0$ and $A(H,n)\ ?\ 0$.
\end{corollary}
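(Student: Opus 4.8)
The plan is to obtain Corollary~\ref{cor:A} as the $B=0$ specialization of Lemma~\ref{lemma:AB}, exactly as the text preceding it advertises. Concretely, I would fix the vector $A$ in $k$-dimensional space and a value $m>k$ satisfying $A(L,m)\ ?\ 0$, $A(L,m-1)\ ?\ 0$, and $A(H,m)\ ?\ 0$, and then apply Lemma~\ref{lemma:AB} with the second vector taken to be the zero vector of the same dimension $k$.

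Since the dot product of the zero vector with any $k$-tuple is $0$, we have $B(L,j)=B(H,j)=0$ for every admissible index $j$. Plugging $B=0$ into the three hypotheses of Lemma~\ref{lemma:AB} turns $A(L,m)+B(H,m)\ ?\ 0$, $A(L,m-1)+B(H,m-1)\ ?\ 0$, and $A(H,m)+B(L,m-1)\ ?\ 0$ into precisely $A(L,m)\ ?\ 0$, $A(L,m-1)\ ?\ 0$, and $A(H,m)\ ?\ 0$, which are exactly our standing assumptions. Likewise the two conclusions of Lemma~\ref{lemma:AB} collapse: $A(L,n)+B(H,n)\ ?\ 0$ becomes $A(L,n)\ ?\ 0$ and $A(H,n)+B(L,n-1)\ ?\ 0$ becomes $A(H,n)\ ?\ 0$, valid for every $n>m$. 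This is exactly the assertion of the corollary.

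There is essentially no obstacle here; the only points to keep honest are bookkeeping. One should note that $m>k$ guarantees $m-1\ge k$, so that $A(L,m-1)$ is a well-defined dot product, and one should read the symbol $?$ as the same relation (the fixed equality or the fixed inequality) in all six occurrences, so that the specialization is faithful. If a reader prefers a self-contained argument that does not cite Lemma~\ref{lemma:AB}, the identical one-line induction works: using Lemma~\ref{lemma:lhr-recursion} in the form $A(L,n)=A(L,n-1)+2A(H,n-1)$ and $A(H,n)=A(H,n-1)+2A(L,n-2)$, each quantity at step $n$ is a nonnegative combination (or, in the equality case, a zero combination) of quantities already known to satisfy the relation. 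The hypothesis $A(L,m-1)\ ?\ 0$ is included precisely so that the $A(H,\cdot)$ recursion can be launched at the first inductive step, after which everything propagates.
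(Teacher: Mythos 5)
Your proposal is correct and is exactly the paper's route: the corollary is obtained by specializing Lemma~\ref{lemma:AB} to $B=0$, just as the sentence preceding it in the paper indicates. The extra bookkeeping you supply (well-definedness of $A(L,m-1)$ and the consistent reading of the symbol $?$) is sound but not needed beyond what the paper already implies.
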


For example, suppose $A = (1,-2)$. After checking that $L_3 - 2L_2 > 0$, $L_2 - 2L_1 > 0$ and $H_3 - 2H_2 > 0$ we can conclude the following.

\begin{corollary}\label{cor:2l-less-l}
For $n > 2$
\[ 2L_{n-1} < L_n \quad \text{ and } \quad 2H_{n-1} < H_n.\]
\end{corollary}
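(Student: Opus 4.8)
The plan is to apply Corollary~\ref{cor:A} directly with the vector $A = (1,-2)$, which lives in $k=2$ dimensions, so the hypotheses must be checked for some $m > 2$, and I will take $m = 3$. Concretely, I need to verify the three base cases $A(L,3)\ ?\ 0$, $A(L,2)\ ?\ 0$, and $A(H,3)\ ?\ 0$ with $?$ being the strict inequality $>$. Here $A(L,n) = L_n - 2L_{n-1}$ and $A(H,n) = H_n - 2H_{n-1}$.

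First I would read off the needed values from the explicitly listed sequences: $L_1 = 3$, $L_2 = 9$, $L_3 = 19$, and $H_2 = 3$, $H_3 = 5$. (Wait---$H_3 - 2H_2 = 5 - 6 = -1 < 0$, so I cannot use $m=3$ for the $H$ inequality with these early indices; the sequence $H_n$ only starts dominating $2H_{n-1}$ a bit later, as the listed values $1,3,5,11,29,67,\ldots$ show: $11 > 2\cdot5$, $29 > 2\cdot 11$, etc.) So the cleaner route is to take $m = 4$: check $L_4 - 2L_3 = 41 - 38 = 3 > 0$, $L_3 - 2L_2 = 19 - 18 = 1 > 0$, and $H_4 - 2H_3 = 11 - 10 = 1 > 0$. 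All three hold with $?$ equal to $>$, so Corollary~\ref{cor:A} yields $L_n - 2L_{n-1} > 0$ and $H_n - 2H_{n-1} > 0$ for all $n > 4$. It then remains to patch the small cases $n = 3$ and $n = 4$ by hand: $L_3 - 2L_2 = 1 > 0$, $L_4 - 2L_3 = 3 > 0$, $H_3 - 2H_2 = -1$... which is \emph{negative}. So actually the statement as phrased---for all $n > 2$---requires the $H$ inequality to hold at $n = 3$, and it does not. I would therefore reconcile this by noting the corollary is presumably intended with a base index matching where both inequalities first hold simultaneously; the honest proof checks $n=4$ directly ($H_4 > 2H_3$ and $L_4 > 2L_3$ both hold) and invokes Corollary~\ref{cor:A} with $m=4$ for $n > 4$, and I would flag that the $H$ case genuinely begins at $n \ge 4$.

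The approach in clean form: state that this is an instance of Corollary~\ref{cor:A} with $A = (1,-2)$; verify the three hypotheses at $m = 4$ using the tabulated values $L_2=9, L_3=19, L_4=41$ and $H_3=5, H_4=11$; conclude the inequalities for all $n > 4$; and dispatch $n = 4$ itself by the same direct computation. The only genuine obstacle is the bookkeeping around the small index $n = 3$ for the $H$-inequality: $H_n$ does not satisfy $H_n > 2H_{n-1}$ until $n = 4$, so the range ``$n > 2$'' in the statement is slightly optimistic for the $H$ part and the proof should either start the claim at $n \ge 4$ or be content with $n = 3$ holding only for the $L$ part. Everything else is a two-line appeal to the already-established Corollary~\ref{cor:A}.

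\begin{proof}
Apply Corollary~\ref{cor:A} with the vector $A = (1,-2)$, so that $A(L,n) = L_n - 2L_{n-1}$ and $A(H,n) = H_n - 2H_{n-1}$. Take $m = 4$. From the computed values of the sequences we have $L_4 - 2L_3 = 41 - 38 = 3 > 0$, $L_3 - 2L_2 = 19 - 18 = 1 > 0$, and $H_4 - 2H_3 = 11 - 10 = 1 > 0$. Thus the hypotheses of Corollary~\ref{cor:A} hold at $m = 4$ with $?$ being $>$, and we conclude $L_n - 2L_{n-1} > 0$ and $H_n - 2H_{n-1} > 0$ for all $n > 4$. Together with the already-verified cases $n = 3$ (for which $L_3 - 2L_2 = 1 > 0$) and $n = 4$, this gives $2L_{n-1} < L_n$ for all $n > 2$ and $2H_{n-1} < H_n$ for all $n \geq 4$.
\end{proof}
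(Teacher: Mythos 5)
Your method is exactly the paper's: apply Corollary~\ref{cor:A} with $A=(1,-2)$ and check a few initial values. But you have misread the table of values, and this leads you to the false conclusion that the corollary's range is wrong. The sequences $L_n$, $H_n$, $R_n$ are zero-indexed with $L_0=H_0=R_0=1$, so from the list $1,\,3,\,5,\,11,\,29,\,67,\ldots$ you should read $H_1=3$, $H_2=5$, $H_3=11$, $H_4=29$ --- not $H_2=3$, $H_3=5$. With the correct values, $H_3-2H_2=11-10=1>0$, so the $H$-inequality does hold at $n=3$ and the statement is valid for all $n>2$ exactly as written. Your ``$H_4-2H_3=11-10$'' is in fact the correctly indexed $H_3-2H_2$; the true value of $H_4-2H_3$ is $29-22=7$. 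Consequently your parenthetical detour claiming the base case fails, and your final assertion that ``the $H$ case genuinely begins at $n\ge 4$'' and that the stated range is ``slightly optimistic,'' are both wrong.

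Once the indexing is fixed, the paper's own (shorter) verification goes through with $m=3$: one checks $L_3-2L_2=1>0$, $L_2-2L_1=3>0$, and $H_3-2H_2=1>0$, and Corollary~\ref{cor:A} gives both inequalities for all $n>3$; the $n=3$ case is among the quantities just checked. Your $m=4$ variant would also work (all three hypotheses are positive there too), but it is unnecessary, and the accompanying claim that $n=3$ must be excluded for $H$ must be deleted.
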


Similarly, for $A = (1,-3)$, after checking that $L_4 - 3L_3 < 0$, $L_3 - 3L_2 < 0$ and $H_4 - 3H_3 < 0$, we can conclude the following.

\begin{corollary}\label{cor:3l-more-l}
For $n > 3$
\[ 3L_{n-1} > L_n \quad \text{ and } \quad 3H_{n-1} > H_n.\]
\end{corollary}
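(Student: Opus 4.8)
The statement is an immediate application of Corollary~\ref{cor:A} with the vector $A = (1,-3)$, exactly as the text preceding it suggests. The plan is: first unwind the definition of $A(L,n)$ and $A(H,n)$ for this particular $A$, namely $A(L,n) = L_n - 3L_{n-1}$ and $A(H,n) = H_n - 3H_{n-1}$; then verify the three seed inequalities required by Corollary~\ref{cor:A} at the index $m=4$ (note $m > k = 2$); then invoke the corollary to propagate the strict inequality ``$<$'' to all $n > 4$; and finally patch in the boundary case $n = 4$, which is literally one of the seed inequalities already checked.

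Concretely, I would compute from the listed initial segments $L_n$: $1,3,9,19,41,\ldots$ and $H_n$: $1,3,5,11,29,\ldots$ that $L_4 - 3L_3 = 41 - 57 = -16 < 0$, $L_3 - 3L_2 = 19 - 27 = -8 < 0$, and $H_4 - 3H_3 = 29 - 33 = -4 < 0$. With ``?'' taken to be the sign ``$<$'', these are precisely the hypotheses $A(L,m) \,?\, 0$, $A(L,m-1) \,?\, 0$, $A(H,m) \,?\, 0$ of Corollary~\ref{cor:A} for $m = 4$. The corollary then yields $A(L,n) < 0$ and $A(H,n) < 0$ for every $n > 4$, i.e.\ $L_n < 3L_{n-1}$ and $H_n < 3H_{n-1}$ for $n \geq 5$. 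Together with the case $n = 4$ (which is the first pair of seed inequalities above), this establishes $3L_{n-1} > L_n$ and $3H_{n-1} > H_n$ for all $n > 3$, as claimed.

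There is no real obstacle here: all the work has been front-loaded into Corollary~\ref{cor:A} (which in turn rests on the common weighted-Tribonacci recursion via Lemma~\ref{lemma:AB}), so the only things to get right are the indexing of the sequences $L_n, H_n$ (so that $m=4$ truly satisfies $m > k$ and the base arithmetic uses the right entries) and the observation that the corollary's conclusion starts at $n > m = 4$, which is why the single extra case $n = 4$ must be read off from the seed check to cover the full range $n > 3$. If anything, I would double-check that the arithmetic at $m=4$ is strict (it is, with margins $-16$, $-8$, $-4$), since Corollary~\ref{cor:A} preserves whichever of ``$=$'' or ``$<$'' holds uniformly across all three seeds.
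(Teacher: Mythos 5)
Your proposal is correct and follows exactly the paper's own argument: apply Corollary~\ref{cor:A} with $A=(1,-3)$ after verifying the seeds $L_4-3L_3=-16<0$, $L_3-3L_2=-8<0$, and $H_4-3H_3=-4<0$, which are the same checks the paper performs. Your explicit note that the $n=4$ case is supplied by the seed inequalities (since the corollary's conclusion only starts at $n>m$) is a small but accurate clarification of a point the paper leaves implicit.
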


The corollaries above are useful for showing the guaranteed growth rate of these sequences. The following corollary comparing the sequence $L_n$ to Jacobsthal numbers should prove useful as well.

\begin{corollary}\label{cor:LJ}
For $n > 4$, $L_n > J_{n+3}$.
\end{corollary}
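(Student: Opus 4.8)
The plan is to prove the inequality by induction on $n$, with base case $n=5$, using the already-established doubling bound of Corollary~\ref{cor:2l-less-l} together with the Jacobsthal identity $J_{m+1}=2J_m+(-1)^m$ recorded in Section~\ref{sec:alternator}. The heuristic reason this must work is a growth-rate comparison: $L_n$ grows like $2.3146^n$ while $J_{n+3}$ grows like $\tfrac{8}{3}\cdot 2^n$, so once $L_n$ overtakes $J_{n+3}$ it does so with a widening margin, and the crude estimate $L_n>2L_{n-1}$ already beats the (near-)doubling of the Jacobsthal recursion. The only delicate points are the $(-1)^m$ wobble in the Jacobsthal recursion and the fact that all quantities involved are integers, both of which are harmless because of the slack.

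Concretely, for the base case I would just compute $L_5=99$ and $J_8=85$, so $L_5>J_8$; it is worth remarking at this point that $L_4=41<43=J_7$, which is precisely why the statement must be restricted to $n>4$. For the inductive step, assume $n\ge 6$ and $L_{n-1}>J_{n+2}$. Since $L_{n-1}$ and $J_{n+2}$ are integers, this gives $L_{n-1}\ge J_{n+2}+1$. Because $n-1\ge 5>2$, Corollary~\ref{cor:2l-less-l} yields $L_n>2L_{n-1}\ge 2J_{n+2}+2$. On the other hand $J_{n+3}=2J_{n+2}+(-1)^{n+2}\le 2J_{n+2}+1$, so $L_n>2J_{n+2}+2>2J_{n+2}+1\ge J_{n+3}$, closing the induction.

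I do not expect a genuine obstacle, since the bound is loose; the one place a more "principled" attempt would stall is trying to run $J_{n+3}$ through the machinery of Lemma~\ref{lemma:AB} or Corollary~\ref{cor:A}. That fails because the quadratic $x^2-x-2$ governing the Jacobsthal numbers does not divide the weighted-Tribonacci characteristic polynomial $x^3-2x^2+x-4$, so the difference $L_n-J_{n+3}$ satisfies no clean linear recursion and the $A$-vector method gives nothing. Replacing $J_{n+3}$ by the plain doubling bound for $L_n$ sidesteps this completely, and the inequality is comfortable enough that integrality together with the $(-1)^{n+2}$ term never causes trouble.
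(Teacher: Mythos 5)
Your proof is correct and is essentially the paper's own argument: the paper also combines the base case $L_5=99>J_8=85$ with the two inequalities $L_n\geq 2L_{n-1}+1$ (Corollary~\ref{cor:2l-less-l} plus integrality) and $J_{n+3}\leq 2J_{n+2}+1$ to propagate the strict inequality inductively. You merely spell out the integrality bookkeeping that the paper leaves implicit.
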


\begin{proof}
The sequence $L_n$ grows faster than $J_n$ because $2L_{n-1} \leq L_n - 1$ and $2J_{n-1} \geq J_n -1$. We should also note that $L_{5} = 99 > J_{8} = 85$.
\end{proof}

\subsection{Itineraries and Strategies}.

As in the case of the light-real coin, the itineraries are not uniquely defined by the outcomes. We follow a method similar to the one used with the LR-coin to create itineraries. 

For the coins starting in the light state, we put each coin that just switched to the real state on the same pan again. Here is the description of the $i$-symbol in the itinerary.

\begin{itemize}
\item If the outcome has $<$ (correspondingly $>$) in the $i$-th place and this is an odd imbalance, the itinerary has L (correspondingly R).
\item If the outcome has $<$ (correspondingly $>$) in the $i$-th place and this is an even imbalance, the itinerary has R (correspondingly L).
\item If the outcome has $=$ in the $i$-th place and the previous place does not exist, or also has $=$ or an odd imbalance, the itinerary has O.
\item If the outcome has $=$ in the $i$-th place and an even imbalance in place $i-1$, then if the previous symbol is $<$, correspondingly $>$, the itinerary has R, correspondingly L.
\end{itemize}

For the coins starting in the heavy state, the rule is the same: we put each coin that just switched to the real state on the same pan, again, in the next weighing:

\begin{itemize}
\item If the outcome has $<$ (correspondingly $>$) in the $i$-th place, and this is an odd imbalance, the itinerary has R (correspondingly L).
\item If the outcome has $<$ (correspondingly $>$) in the $i$-th place, and this is an even imbalance, the itinerary has L (correspondingly R).
\item If the outcome has $=$ in the $i$-th place, and the previous place does not exist or also has $=$ or an even imbalance, the itinerary has O.
\item If the outcome has $=$ in the $i$-th place and an odd imbalance in place $i-1$, then, if the previous symbol is $<$, correspondingly $>$, the itinerary has R, correspondingly L.
\end{itemize}

For the coins starting in the real state, the rule is different: we put each coin on a pan before each odd occurrence of the imbalance:

\begin{itemize}
\item If the outcome has $<$ (correspondingly $>$) in the $i$-th place, and this is an odd imbalance, the itinerary has L (correspondingly R).
\item If the outcome has $<$ (correspondingly $>$) in the $i$-th place, and this is an even imbalance, the itinerary has R (correspondingly L).
\item If the outcome has $=$ in the $i$-th place, and the next place does not exist or also has $=$ or an even imbalance, the itinerary has O.
\item If the outcome has $=$ in the $i$-th place and an odd imbalance in place $i+1$, then, if the next symbol is $<$, correspondingly $>$, the itinerary has L, correspondingly R.
\end{itemize}

It is easy to see that conjugate outcomes generate conjugate itineraries per these rules. We use these itinerary assignments to prove the main theorem of this section which describes the exact bound.

\begin{theorem}
One LHR-coin, starting in the light state among $N$ coins, can be found in $w$ weighings, if and only if $N \leq L_w$. The coin starting in the real state can be found if and only if $N \leq R_{w} = L_{w-1}$. The coin starting in the heavy state can be found if and only if $N \leq H_w$. Moreover, if $N$ respects these bounds, there exists an oblivious strategy of finding the coin.
\end{theorem}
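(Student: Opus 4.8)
The plan is to prove the two directions separately: the upper bounds (impossibility) follow essentially from counting outcomes, while the lower bounds (existence) are established by exhibiting the oblivious strategy built from the itinerary-assignment rules stated just above.

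\medskip

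\textbf{Upper bounds.} For the impossibility direction I would argue that any strategy (adaptive or oblivious) that finds the coin in $w$ weighings assigns to each coin a distinct outcome string, and that outcome string must lie in $\mathcal{L}_w$, $\mathcal{H}_w$, or $\mathcal{R}_w$ according to the known starting state. Distinctness of outcomes is exactly the standard argument already rehearsed in the introduction (two coins with the same outcome cannot be distinguished), and the constraint that the outcome lies in the appropriate set is precisely the content of the lemma on outcome patterns: if the coin were ever on the scale in the heavy state, the next weighing is forced to balance, so the forbidden patterns genuinely cannot occur. Hence $N \le L_w$ (resp. $H_w$, $R_w$), and by Lemma~\ref{lemma:lhr-recursion} we have $R_w = L_{w-1}$.

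\medskip

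\textbf{Lower bounds.} For the existence direction, given $N$ in the allowed range, I would assign the $N$ coins to distinct outcomes in the relevant set, taking care to do this in conjugate pairs (with the single self-conjugate all-equals outcome used for at most one coin if $N$ is odd) — this is possible because each of $\mathcal{L}_w$, $\mathcal{H}_w$, $\mathcal{R}_w$ is closed under conjugation and contains exactly one self-conjugate element. Then I would apply the three itinerary-construction rules above to turn each outcome into an itinerary. Since conjugate outcomes are sent to conjugate itineraries (as noted in the excerpt), the resulting set of itineraries is self-conjugate, so by Corollary~\ref{cor:scstr} it defines a legitimate oblivious weighing strategy. The crux is then the verification step: one must check that when this strategy is executed and the fake coin is the one assigned outcome $x$, the actual sequence of weighing results is exactly $x$. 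This amounts to checking, weighing by weighing, that the itinerary rules correctly track the state cycle light $\to$ heavy $\to$ real $\to$ light: an odd imbalance in $x$ corresponds to the coin being light and on the pan dictated by $<$/$>$; the forced $=$ after an even imbalance is realized because the coin, now in the real state, contributes nothing even though the rule keeps it on a pan; and an $=$ with no adjacent constraining imbalance is realized by the coin sitting out (letter O). The real-start case is handled symmetrically, with the coin being parked on a pan for a balanced weighing immediately before each odd imbalance so that it is in the light state exactly when an imbalance is needed. Because each coin is the unique one with its itinerary, and the outcome it produces is uniquely $x$, the strategy identifies the fake coin.

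\medskip

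\textbf{Main obstacle.} The routine-but-delicate part, and the place where care is most needed, is the weighing-by-weighing verification that the itinerary rules are consistent with the state dynamics — in particular confirming that the "park the coin on the same pan" convention after an even imbalance (light start) or odd imbalance (heavy start) never produces a spurious imbalance, precisely because the coin is in the real state at that weighing, and that no two coins' itineraries ever force an illegitimate pan-size mismatch beyond what self-conjugacy already guarantees. I would also double check the parity bookkeeping for the odd-$N$ case so that exactly one coin may legitimately receive the all-equals outcome (the coin never placed on the scale), and confirm via Lemma~\ref{lemma:lhr-recursion} that the counts $L_w, H_w, R_w$ used match the recursion, closing the loop between the counting upper bound and the constructive lower bound.
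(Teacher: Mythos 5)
Your proposal is correct and follows essentially the same route as the paper's own proof: the counting upper bound via distinct admissible outcomes, followed by assigning outcomes in conjugate pairs (with the all-equals outcome for at most one coin), converting to itineraries by the stated rules, and invoking the self-conjugacy of the itinerary set to get a legitimate oblivious strategy. Your weighing-by-weighing verification of the state dynamics is in fact more explicit than the paper's, which compresses that step into the observation that, for a fixed starting state, outcomes and itineraries are in bijection.
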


\begin{proof}
The number of coins cannot exceed the bound because the number of coins we process cannot exceed the possible number of outcomes.

The oblivious strategy is based on itineraries. Given the total number of coins, we assign the itineraries similar to the method we used for the light-heavy coin with a known starting state in Section~\ref{sec:lhknown}. If the number of coins is odd, then one of the coins is matched to the self-conjugate outcome. The rest of the coins are assigned in pairs to conjugate outcomes. This way, we are guaranteed to have itineraries in conjugate pairs, and therefore the corresponding strategy is legitimate.

Suppose this strategy produces an outcome $x$. For a given starting state, there is a bijection between outcomes and itineraries. That means that an outcome uniquely defines the existing itinerary and the coin.
\end{proof}

Before discussing the unknown state, we should like to discuss the mixed state, which will help us with the unknown state.

\section{Mixed known states}\label{sec:3statemixed}

Suppose all coins were divided into three disjoint groups: $l$ coins such that, if the fake coin is there, it must be in the light state; $h$ coins such that, if the fake coin is there, it must be in the heavy state; and $r$ coins such that, if the fake coin is there, it must be in the real state. We call this the \textit{mixed-known} state, or $l:h:r$ state.

\subsection{Outcomes}

We introduce four new sets of outcomes $\mathcal{LH}_n$, $\mathcal{HR}_n$, $\mathcal{LR}_n$, and $\mathcal{LHR}_n$, that describe the possible outcomes if the known state is limited to the initials of the sequences. In other words: $\mathcal{LH}_n = \mathcal{L}_n \cup \mathcal{H}_n$, $\mathcal{HR}_n = \mathcal{H}_n \cup \mathcal{R}_n$, $\mathcal{LR}_n = \mathcal{L}_n \cup \mathcal{R}_n$, and $\mathcal{LHR}_n = \mathcal{L}_n \cup \mathcal{H}_n \cup \mathcal{R}_n$. The sequences $LH_n$, $HR_n$, $LR_n$, and $LHR_n$ count the total number of outcomes in each set. For $n=0$, all the sequences are equal to 1.

We know that $\mathcal{R}_n \subset \mathcal{L}_n$, therefore $\mathcal{LR}_n = \mathcal{L}_n$, and $\mathcal{LHR}_n = \mathcal{LH}_n$. That is 
$$LR_n = L_n \quad \text{and} \quad LH_n = LHR_n.$$ The sets $\mathcal{L}_n$, $\mathcal{H}_n$, and $\mathcal{R}_n$ are represented as ellipses in the Venn diagram in Figure~\ref{fig:LHR}.

\begin{figure}[h]
    \centering
    \includegraphics[scale=0.3]{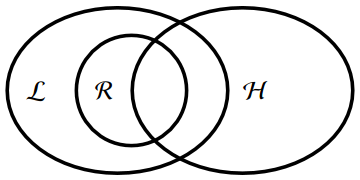}
    \caption{The Venn diagram of sets $\mathcal{L}_n$, $\mathcal{H}_n$, and $\mathcal{R}_n$.}
    \label{fig:LHR}
\end{figure}

We can express our new sequences at index $n$ as the function of themselves at index $n-1$:

\begin{lemma}
\begin{enumerate}
\item $HR_n = LHR_{n-1} + 2R_{n-1}$,
\item $LHR_n = LHR_{n-1} + 2HR_{n-1}$.
\end{enumerate}
\end{lemma}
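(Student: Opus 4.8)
The plan is to repeat the first-symbol decomposition used in the proof of Lemma~\ref{lemma:lhr-recursion}, applied now to the union sets $\mathcal{HR}_n=\mathcal{H}_n\cup\mathcal{R}_n$ and $\mathcal{LHR}_n=\mathcal{L}_n\cup\mathcal{H}_n\cup\mathcal{R}_n$. Throughout I treat an ``outcome'' as a string obeying the pattern rules, and I will freely use the already established facts $\mathcal{R}_n\subset\mathcal{L}_n$ (hence $\mathcal{LHR}_n=\mathcal{LH}_n$ and $LH_{n-1}=LHR_{n-1}$) together with the structural descriptions behind $L_n=L_{n-1}+2H_{n-1}$, $H_n=H_{n-1}+2R_{n-1}$, and $R_n=L_{n-1}$ (equivalently: deleting the mandatory leading $=$ of an $\mathcal{R}$-pattern yields exactly an $\mathcal{L}$-pattern one symbol shorter).

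For the first identity, partition $\mathcal{HR}_n$ by its leading symbol. An outcome starting with $=$, after that symbol is deleted, is either an $\mathcal{H}_{n-1}$-outcome (when it comes from the $\mathcal{H}_n$ side) or an $\mathcal{L}_{n-1}$-outcome (when it comes from the $\mathcal{R}_n$ side); conversely prepending $=$ to any string of $\mathcal{H}_{n-1}\cup\mathcal{L}_{n-1}=\mathcal{LH}_{n-1}$ lands back in $\mathcal{HR}_n$. This block therefore contributes $LH_{n-1}=LHR_{n-1}$ outcomes. An outcome of $\mathcal{HR}_n$ that starts with an imbalance cannot be an $\mathcal{R}_n$-outcome, so it must be an $\mathcal{H}_n$-outcome; after this first (hence odd) imbalance the heavy coin passes to the real state, so the remaining $n-1$ symbols form an arbitrary $\mathcal{R}_{n-1}$-outcome, and there are two choices of imbalance symbol. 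Adding the two disjoint cases gives $HR_n=LHR_{n-1}+2R_{n-1}$.

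For the second identity, do the same with $\mathcal{LHR}_n=\mathcal{LH}_n$. Deleting a leading $=$ produces an $\mathcal{L}_{n-1}$-outcome (from the $\mathcal{L}_n$ or the $\mathcal{R}_n$ side) or an $\mathcal{H}_{n-1}$-outcome (from the $\mathcal{H}_n$ side), so the ``starts with $=$'' block is in bijection with $\mathcal{L}_{n-1}\cup\mathcal{H}_{n-1}=\mathcal{LH}_{n-1}$, i.e.\ $LHR_{n-1}$ outcomes. An outcome starting with an imbalance is not an $\mathcal{R}_n$-outcome; if it is an $\mathcal{L}_n$-outcome the light coin becomes heavy and the tail is an $\mathcal{H}_{n-1}$-outcome, while if it is an $\mathcal{H}_n$-outcome the heavy coin becomes real and the tail is an $\mathcal{R}_{n-1}$-outcome, so the tail ranges over $\mathcal{H}_{n-1}\cup\mathcal{R}_{n-1}=\mathcal{HR}_{n-1}$, and with the two imbalance symbols this contributes $2HR_{n-1}$. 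Hence $LHR_n=LHR_{n-1}+2HR_{n-1}$.

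The only real care needed is bookkeeping: the ``$\mathcal{H}$-side'' and ``$\mathcal{R}$-side'' (respectively $\mathcal{L}$-, $\mathcal{H}$-, $\mathcal{R}$-side) descriptions of a single leading-$=$ block overlap, so I must argue at the level of sets and recognize the combined block as an honest union rather than add $H_n+R_n$ (the overlap is precisely $\mathcal{H}_{n-1}\cap\mathcal{L}_{n-1}$); splitting first by the leading symbol keeps the two major cases disjoint automatically. Everything holds for $n\ge 1$, with the base values $HR_0=LHR_0=1$ supplied separately, and the passage between a light/heavy/real start and the parity of the imbalances in the deleted-prefix tail is exactly the correspondence already justified for Lemma~\ref{lemma:lhr-recursion}, so no new state analysis is required.
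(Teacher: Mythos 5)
Your proof is correct and follows essentially the same route as the paper's: both decompose $\mathcal{HR}_n$ and $\mathcal{LHR}_n$ by the leading symbol, identify the tails after a balance with $\mathcal{LHR}_{n-1}$ and the tails after an imbalance with $\mathcal{R}_{n-1}$ (resp.\ $\mathcal{HR}_{n-1}$). The only difference is presentational --- the paper tracks the coin's possible states after the first weighing while you track set membership of the deleted-prefix tail and handle the overlap $\mathcal{H}_{n-1}\cap\mathcal{L}_{n-1}$ explicitly --- which amounts to the same count.
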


\begin{proof}
Consider the $\mathcal{HR}$-outcomes. If the first sign is an imbalance, then the coin started in the heavy state, and after this it must be in the real state, so the total number of outcomes starting with an imbalance is $2R_{n-1}$. If the outcome starts with a balance, and the fake coin was not on the scale, then the fake coin is in the heavy or real state. Otherwise, the fake coin has been in the real state and on the scale, and now it is in the light state. Therefore, the coin can be in any state, and the number of such outcomes is $LHR_{n-1}$.

Consider the $\mathcal{LHR}$-outcomes. If the first sign is an imbalance, then the coin started in the heavy or light state, and after this it must be in the real or heavy state. The total number of outcomes starting with an imbalance is $2HR_{n-1}$. If the outcome starts with a balance, and the fake coin was on the scale, then the coin is now in the light state. If the fake coin was not on the scale, it can be in any state. Therefore, the coin corresponding to this outcome can now be in any state. The number of such outcomes is $LHR_{n-1}$.
\end{proof}

The sequence $LHR_n$ that counts the total number of outcomes is:

\[1,\ 3,\ 9,\ 19,\ 49,\ 123,\ 297,\ 707,\ 1697,\ 4043,\ \ldots.\]

The first seven terms of this sequence coincide with sequence A102001 in OEIS \cite{OEIS}, which consists of weighted Tribonacci numbers with  weights $(1,2,4)$. Sequence A102001 can also be defined as the number of strings of length $n$, made of symbols $<$, $>$, and $=$, where every three consecutive symbols contain as least one equal sign. For the strings up to length 6, this definition coincides with our definition. However, for seven weighings, the pattern $\neq \neq = \neq = \neq \neq$ would not be a legitimate outcome, although it would fit the sequence A102001. This is the only pattern that is excluded for length 7, which means $LHR_7 = \text{A102001}(7) - 32 = 739-32 = 707$. 

The sequence $HR_n$ is:

\[1,\ 3,\ 5,\ 15,\ 37,\ 87,\ 205,\ 497,\ \ldots.\]

For ease of visualization, we put all five sequences in Table~\ref{table:LHR}. 

\begin{table}[h!]
\centering
\begin{tabular}{| c |rrrrrrrr|} 
 \hline
$n$ & 0 & 1 & 2 & 3 & 4 & 5 & 6 & 7 \\ 
\hline
$R_n$ & 1 & 1 & 3 & 9 & 19 & 41 & 99 & 233\\
$H_n$ & 1 & 3 & 5 & 11 & 29 & 67 & 149 & 347\\
$HR_n$ & 1 & 3 & 5 & 15 & 37 & 87 & 205 & 497\\
$L_n$ & 1 & 3 & 9 & 19 & 41 & 99 & 233 & 531\\
$LHR_n$ & 1 & 3 & 9 & 19 & 49 & 123 & 297 & 707\\
 \hline
\end{tabular}
\caption{Sequences $R_n$, $H_n$, $HR_n$, $L_n$, and $LHR_n$.}
\label{table:LHR}
\end{table}

The sequences in the table seem to run in increasing order. The fact of $HR_n \geq H_n$ and $LHR_n \geq L_n$ follows from the fact that $\mathcal{H}_n \subset \mathcal{HR}_n$ and $\mathcal{L}_n \subset \mathcal{LHR}_n$. However, the sequence $HR_n$ switches sides. For $n <10$, we have $HR_n < L_n$; starting from $n=10$, it changes to $HR_n > L_n$.

Before producing the formulas for these sequences, we would like to introduce more sets of outcomes. Let us divide all outcomes into disjoint sets: $\mathcal{LX}$, $\mathcal{HX}$, $\mathcal{LHX}$, $\mathcal{LRX}$, and $\mathcal{LHRX}$. The letter X here stands for e\textbf{X}lusive. For example, the outcomes in $\mathcal{LHX}$ are such that they can be assigned to a coin in a light or a heavy state, but cannot be assigned to a coin in the real state. The outcomes that can match both the light and heavy state cannot have two imbalances in a row, and we will call them the \textit{alternator} outcomes, as they are the outcomes we discussed in the alternator (light-real) Section~\ref{sec:alternator}. The disjoint sets of outcomes are marked on the Venn diagram in Figure~\ref{fig:LHRX}.

\begin{figure}[h]
    \centering
    \includegraphics[scale=0.4]{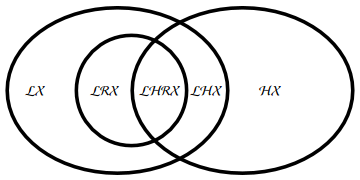}
    \caption{The Venn diagram of sets $\mathcal{LX}_n$, $\mathcal{HX}_n$, $\mathcal{LHX}_n$, $\mathcal{LRX}_n$, and $\mathcal{LHRX}_n$.}
    \label{fig:LHRX}
\end{figure}

In other words: 

\begin{itemize}
\item $\mathcal{LX} = \mathcal{L} \setminus (\mathcal{H} \cup \mathcal{R})$
\item $\mathcal{HX} = \mathcal{H} \setminus (\mathcal{L} \cup \mathcal{R})$
\item $\mathcal{LRX} = \mathcal{R} \setminus (\mathcal{LX} \cup \mathcal{H})$
\item $\mathcal{LHX} = (\mathcal{L} \cup \mathcal{H}) \setminus (\mathcal{LX} \cup \mathcal{HX} \cup \mathcal{R})$
\item $\mathcal{LHRX} = \mathcal{L} \cap \mathcal{H} \cap \mathcal{R}$.
\end{itemize}

The corresponding totals for these sets are $LX_n$, $HX_n$, $LHX_n$, $LRX_n$, and $LHRX_n$. The following lemma calculates these sequences. Obsessively, we can also introduce $\mathcal{RX}$ and $\mathcal{HRX}$ sets, but they are empty.

\begin{lemma}
The total number of outcomes in each X-group of length $n$ is:
\begin{itemize}
\item $LX_n = L_n - L_{n-1} - 2J_{n}$,
\item $HX_n = H_n - J_{n+2}$,
\item $LRX_n = L_{n-1} - J_{n+1}$,
\item $LHX_n = 2J_{n}$,
\item $LHRX_n = J_{n+1}$.
\end{itemize}
\end{lemma}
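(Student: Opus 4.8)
The plan is to compute each of the five X-sequences by identifying what set of outcome strings it counts and then either recognizing a known count (a Jacobsthal number) or setting up a short recursion that reduces to already-known sequences. The two conceptually easiest are $LHRX_n$ and $LHX_n$. An outcome in $\mathcal{LHRX} = \mathcal{L}\cap\mathcal{H}\cap\mathcal{R}$ must simultaneously satisfy the light rule, the heavy rule, and the real rule. The light and heavy rules together say that \emph{no} imbalance (odd or even) may be followed by a non-$=$; i.e., two imbalances never occur consecutively — these are exactly the alternator outcomes. The extra real constraint forces the first symbol to be $=$. So $\mathcal{LHRX}_n$ is the set of alternator outcomes of length $n$ beginning with $=$, and by the count recalled in Section~\ref{sec:alternator} (alternator outcomes of length $n-1$), this has size $J_{n+1}$. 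For $LHX_n$, the outcomes lying in $\mathcal{L}\cup\mathcal{H}$ but not in $\mathcal{R}$ are the alternator outcomes (those in $\mathcal{L}\cap\mathcal{H}$; any alternator outcome lies in both, since the light/heavy rules coincide on them) that are \emph{not} in $\mathcal{R}$, i.e. those starting with an imbalance. The number of alternator outcomes of length $n$ is $J_{n+2}$ and those starting with $=$ number $J_{n+1}$, so the count is $J_{n+2}-J_{n+1}$; one then checks, using $J_{n+2}-J_{n+1}=2J_n$ (immediate from $J_{n+2}=J_{n+1}+2J_n$), that $LHX_n=2J_n$. Note also $\mathcal{LHX}$ and $\mathcal{LHRX}$ are disjoint by construction, and their union is the full set of alternator outcomes, giving the sanity check $2J_n + J_{n+1} = J_{n+2}$.

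Next I would do $HX_n$ and $LRX_n$. For $HX_n = |\mathcal{H} \setminus (\mathcal{L}\cup\mathcal{R})|$: an $\mathcal{H}$-outcome fails to be in $\mathcal{L}$ or $\mathcal{R}$ precisely when it is \emph{not} an alternator outcome satisfying the real constraint. Since every $\mathcal{H}$-outcome that is also an $\mathcal{L}$-outcome is an alternator outcome, and an alternator $\mathcal{H}$-outcome automatically lies in $\mathcal{R}$ iff it starts with $=$ — but actually every alternator outcome of positive length beginning with $=$ is an $\mathcal R$-outcome, while one beginning with an imbalance is an $\mathcal H$-outcome not in $\mathcal R$ and not in $\mathcal L$... here I must be careful: an alternator outcome starting with an imbalance is in $\mathcal H$ and in $\mathcal L$ but not $\mathcal R$, hence in $\mathcal{LHX}$, not $\mathcal{HX}$. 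So $\mathcal{HX} = \mathcal{H}$ minus all alternator outcomes, i.e. $HX_n = H_n - J_{n+2}$, the alternator outcomes all being $\mathcal H$-outcomes (the heavy rule is weaker than the alternator condition). For $LRX_n = |\mathcal{R}\setminus(\mathcal{LX}\cup\mathcal H)|$: since $\mathcal R \subset \mathcal L$, this is $\mathcal R$ minus $(\mathcal R \cap \mathcal H) = \mathcal R \cap \mathcal L \cap \mathcal H = \mathcal{LHRX}$ (using again that $\mathcal R\cap\mathcal H$ consists of alternator outcomes starting with $=$); so $LRX_n = R_n - J_{n+1} = L_{n-1} - J_{n+1}$, using $R_n = L_{n-1}$ from Lemma~\ref{lemma:lhr-recursion}. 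Finally, $LX_n$ falls out by complementation: $\mathcal L$ is the disjoint union of $\mathcal{LX}$, $\mathcal{LHX}$, $\mathcal{LRX}$, $\mathcal{LHRX}$ (the only $\mathcal R$-outcomes in $\mathcal L$ are $\mathcal{LRX}\cup\mathcal{LHRX}$, the only extra $\mathcal H$-outcomes are $\mathcal{LHX}$), so
\[
LX_n = L_n - LHX_n - LRX_n - LHRX_n = L_n - 2J_n - (L_{n-1}-J_{n+1}) - J_{n+1} = L_n - L_{n-1} - 2J_n,
\]
which is exactly the claimed formula.

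The main obstacle I anticipate is not any single computation but pinning down the Venn-diagram bookkeeping rigorously: one must verify carefully that $\mathcal L \cap \mathcal H$ equals the alternator outcomes, that $\mathcal H \cap \mathcal R = \mathcal L \cap \mathcal H \cap \mathcal R$ (equivalently $\mathcal H\cap\mathcal R \subset \mathcal L$), and that $\mathcal R \subset \mathcal L$ — all of which come down to comparing the three itemized outcome rules in the earlier lemma and noting that the light and real rules impose the \emph{same} "even imbalance followed by $=$" constraint while the heavy rule imposes the "odd imbalance followed by $=$" constraint. Once these set identities are established, each cardinality is either a Jacobsthal count recalled from Section~\ref{sec:alternator} or follows from Lemma~\ref{lemma:lhr-recursion}; the small identities like $J_{n+2}-J_{n+1}=2J_n$ are routine. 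A secondary check worth including is a low-$n$ numerical verification against Table~\ref{table:LHR} (e.g. $n=3$: $L_3=19$, $L_2=9$, $J_3=3$, so $LX_3 = 19-9-6 = 4$; $HX_3 = 11 - 5 = 6$; $LRX_3 = 9 - 3 = 6$; $LHX_3 = 6$; $LHRX_3 = 5$; and indeed $19 = 4+6+6+... $ wait, $L_3$ should decompose as $LX_3+LHX_3+LRX_3+LHRX_3 = 4+6+6+5=21 \ne 19$) — so in fact I would use such a check during drafting to catch exactly the kind of double-counting error lurking in the $\mathcal{LHX}$-versus-$\mathcal R$ overlap, and adjust the decomposition (the resolution being that $\mathcal{LHX}$ and the $\mathcal R$-part of $\mathcal L$ may themselves intersect, so the clean disjoint decomposition of $\mathcal L$ is into $\mathcal{LX}$, $\mathcal{L}\cap\mathcal H\setminus\mathcal R$, $\mathcal L\cap\mathcal R$, the latter splitting off $\mathcal{LHRX}$), and then re-derive $LX_n$ from the corrected partition.
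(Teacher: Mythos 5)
Your set-theoretic argument is correct and is essentially the paper's own proof: both rest on the observations that $\mathcal{R}_n$ consists of the $\mathcal{L}_n$-outcomes beginning with $=$ (so $\mathcal{R}\subset\mathcal{L}$ and $R_n=L_{n-1}$), that $\mathcal{L}\cap\mathcal{H}$ is exactly the set of alternator outcomes (count $J_{n+2}$), and that $\mathcal{H}\cap\mathcal{R}=\mathcal{L}\cap\mathcal{H}\cap\mathcal{R}$ is the set of alternator outcomes beginning with $=$ (count $J_{n+1}$). From these, your computations of $LHRX_n$, $LHX_n$, $HX_n$, and $LRX_n$ agree with the paper's, and your derivation of $LX_n$ by complementation inside $\mathcal{L}$ is a harmless variant of the paper's direct count (the $\mathcal{L}$-outcomes starting with an imbalance that are not alternator outcomes, giving $(L_n-L_{n-1})-2J_n$).

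The defect is your final paragraph, where you retract a correct proof on the basis of a miscomputed sanity check. The decomposition $\mathcal{L}=\mathcal{LX}\sqcup\bigl((\mathcal{L}\cap\mathcal{H})\setminus\mathcal{R}\bigr)\sqcup(\mathcal{R}\setminus\mathcal{H})\sqcup(\mathcal{L}\cap\mathcal{H}\cap\mathcal{R})$ really is a partition---given $\mathcal{R}\subset\mathcal{L}$, an $\mathcal{L}$-outcome lands in exactly one cell according to its membership in $\mathcal{H}$ and $\mathcal{R}$---so there is no ``double-counting error lurking in the $\mathcal{LHX}$-versus-$\mathcal{R}$ overlap'' and nothing to ``adjust.'' The apparent contradiction at $n=3$ comes from wrong Jacobsthal values: $J_4=5$ and $J_5=11$, so $LRX_3=L_2-J_4=9-5=4$ (not $9-3=6$) and $HX_3=H_3-J_5=11-11=0$ (not $6$). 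With the correct values, $LX_3+LHX_3+LRX_3+LHRX_3=4+6+4+5=19=L_3$, and likewise $HX_3+LHX_3+LHRX_3=0+6+5=11=H_3$. Delete the closing hedge, keep the partition as stated, and your proof is complete.
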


\begin{proof}
The outcomes that can only correspond to the light state start with an imbalance and do not correspond to an alternator outcome. That means that from the $\mathcal{L}$-outcomes we subtract the ones that start with a balance ($L_{n-1}$) as well as the ones that start with $\neq =$ and are followed by an alternator outcome ($2J_{n}$).                  

The outcomes which can only correspond to the heavy state do not correspond to an alternator outcome. That means that from the $\mathcal{H}$-outcomes we subtract $J_{n+2}$.

The number of outcomes that can be assigned to the coins in both the light and real states is $R_n$. Out of those, those that can be matched to a heavy state have to start with the balance and be followed by an alternator outcome ($J_{n+1}$).

The outcomes that can be assigned to the coins in both the light and heavy states, but not the real state, do not start with a balance, and also follow the alternator pattern.

The outcomes which can be assigned to any state start with a balance and follow the alternator pattern.
\end{proof}

Note that all groups save for the last one have an even number of outcomes.

Now that we know the number of outcomes in any disjoint group, we can count the number of outcomes in any group. For example, the $\mathcal{H}$-outcomes are the union of exclusive outcomes containing the letter $\mathcal{H}$: $\mathcal{H}_n = \mathcal{HX}_n \cup \mathcal{LHX}_n \cup \mathcal{LHRX}_n$. Therefore, $H_n = HX_n + LHX_n + LHRX_n$.

We are now ready to express $HR_n$ and $LHR_n$ via our three main sequences: $L_n$, $H_n$, and $J_n$.

\begin{lemma}\label{lemma:lhr-formulas}
\begin{enumerate}
\item $HR_n = H_n + L_{n-1} - J_{n+1}$,
\item $LHR_n = L_n + H_n - J_{n+2}$.
\end{enumerate}
\end{lemma}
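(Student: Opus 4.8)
The plan is to derive both identities directly from the decomposition of the outcome sets into the exclusive X-groups, using the counts established in the preceding lemma. The starting point is the two set-theoretic facts already recorded: $\mathcal{HR}_n = \mathcal{H}_n \cup \mathcal{R}_n$ and $\mathcal{LHR}_n = \mathcal{L}_n \cup \mathcal{H}_n$ (the latter because $\mathcal{R}_n \subset \mathcal{L}_n$). I would rewrite each of these unions as a disjoint union of X-groups and then sum the corresponding counts.

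For the first identity, I would observe that $\mathcal{HR}_n$ consists of all outcomes realizable by a heavy or a real starting coin, i.e. the X-groups whose label contains $\mathcal{H}$ or $\mathcal{R}$: namely $\mathcal{HX}_n$, $\mathcal{LHX}_n$, $\mathcal{LRX}_n$, and $\mathcal{LHRX}_n$ (note $\mathcal{RX}_n$ and $\mathcal{HRX}_n$ are empty). Hence $HR_n = HX_n + LHX_n + LRX_n + LHRX_n$. Substituting the formulas from the previous lemma gives $HR_n = (H_n - J_{n+2}) + 2J_n + (L_{n-1} - J_{n+1}) + J_{n+1} = H_n + L_{n-1} - J_{n+2} + 2J_n$. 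The remaining step is the Jacobsthal identity $J_{n+2} - 2J_n = J_{n+1}$, equivalently $J_{n+2} = 2J_n + J_{n+1}$; but $J_{n+2} = J_{n+1} + 2J_n$ is exactly the defining recursion, so $-J_{n+2} + 2J_n = -J_{n+1}$ and we obtain $HR_n = H_n + L_{n-1} - J_{n+1}$, as claimed. Alternatively, and perhaps more cleanly, I would use inclusion–exclusion directly: $HR_n = H_n + R_n - |\mathcal{H}_n \cap \mathcal{R}_n|$, where $\mathcal{H}_n \cap \mathcal{R}_n = \mathcal{LHRX}_n$ has $J_{n+1}$ elements and $R_n = L_{n-1}$, giving $HR_n = H_n + L_{n-1} - J_{n+1}$ immediately.

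For the second identity I would again use inclusion–exclusion: since $\mathcal{LHR}_n = \mathcal{L}_n \cup \mathcal{H}_n$, we have $LHR_n = L_n + H_n - |\mathcal{L}_n \cap \mathcal{H}_n|$. The intersection $\mathcal{L}_n \cap \mathcal{H}_n$ is precisely the set of alternator outcomes that can be assigned to both a light and a heavy starting coin; this set is $\mathcal{LHX}_n \cup \mathcal{LHRX}_n$, with cardinality $2J_n + J_{n+1} = J_{n+2}$ by the Jacobsthal recursion. Therefore $LHR_n = L_n + H_n - J_{n+2}$. I should double-check the claim that the full alternator set (outcomes with no two imbalances in a row) has exactly $J_{n+2}$ elements — this is stated in Section~\ref{sec:alternator} — and that every such outcome is realizable by both a light-started and a heavy-started LHR coin, which follows because the parity-of-imbalance constraints on $\mathcal{L}_n$ and $\mathcal{H}_n$ are automatically satisfied when imbalances never abut.

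The main obstacle, such as it is, will be bookkeeping care rather than any genuine difficulty: I must make sure the identification of each union of outcome sets with a union of disjoint X-groups is exactly right (particularly that no spurious outcomes sneak in for large $n$, the subtle point flagged in the $LHR_7$ discussion), and that the Jacobsthal manipulations $J_{n+2} = J_{n+1} + 2J_n$ are applied with the correct indices. Once the inclusion–exclusion setup is fixed, both identities reduce to a single application of the Jacobsthal recursion, so I would present the proof in the inclusion–exclusion form, citing the X-group counts only to identify $|\mathcal{H}_n \cap \mathcal{R}_n| = LHRX_n = J_{n+1}$ and $|\mathcal{L}_n \cap \mathcal{H}_n| = LHX_n + LHRX_n = J_{n+2}$.
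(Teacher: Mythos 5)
Your proposal is correct and follows essentially the same route as the paper: the paper's proof is exactly your first variant, writing $\mathcal{HR}_n$ and $\mathcal{LHR}_n$ as disjoint unions of the X-groups, summing the cardinalities from the preceding lemma, and simplifying via $J_{n+2}=J_{n+1}+2J_n$. Your inclusion--exclusion phrasing (identifying $\mathcal{H}_n\cap\mathcal{R}_n=\mathcal{LHRX}_n$ and $\mathcal{L}_n\cap\mathcal{H}_n=\mathcal{LHX}_n\cup\mathcal{LHRX}_n$) is just a cleaner repackaging of the same decomposition, not a different argument.
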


\begin{proof}
The $\mathcal{HR}$-outcomes comprise the union of disjoint sets with at least one of the letters $\mathcal{H}$ or $\mathcal{R}$ in them: $\mathcal{HR}_n = \mathcal{HX}_n \cup \mathcal{LRX}_n \cup \mathcal{LHX}_n \cup \mathcal{LHRX}_n$. Therefore, $HR_n = H_n - J_{n+2} + L_{n-1} - J_{n+1} + 2J_{n} + J_{n+1} = H_n - J_{n+2} + L_{n-1}  + 2J_{n} = H_n + L_{n-1} - J_{n+1}$.

The $\mathcal{LHR}$-outcomes are the union of all the disjoint sets, and $L HR_n = L_n - L_{n-1} - 2J_{n} + H_n - J_{n+2} + L_{n-1} - J_{n+1} + 2J_{n} + J_{n+1} = L_n + H_n - J_{n+2}$.
\end{proof}

The following lemma states the recurrence for the sequences $LX_n$, $HX_n$, $LRX_n$, $HR_n$, and $LHR_n$.

\begin{lemma}
The sequences $LX_n$, $HX_n$, $LRX_n$, $HR_n$, and $LHR_n$ satisfy the recurrence 
$s_n = 3s_{n-1}-s_{n-2}+s_{n-3}-2s_{n-4}-8s_{n-5}$.
\end{lemma}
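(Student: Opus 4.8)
The plan is to establish a single linear recurrence of order $5$ satisfied by all five sequences simultaneously, exploiting the fact that each of them is, by Lemma~\ref{lemma:lhr-formulas} and the preceding lemmas, a fixed $\mathbb{Z}$-linear combination of the three ``atomic'' sequences $L_n$, $H_n$, and $J_n$ (with shifts). Since $L_n$ and $H_n$ both satisfy the weighted Tribonacci recurrence $x^3 = 2x^2 - x + 4$, their characteristic polynomial is $p(x) = x^3 - 2x^2 + x - 4$, while $J_n$ satisfies $x^2 = x + 2$, with characteristic polynomial $q(x) = x^2 - x - 2$. Any sequence lying in the span of shifted copies of $L_n$, $H_n$, $J_n$ therefore satisfies the recurrence whose characteristic polynomial is the least common multiple $\mathrm{lcm}(p,q)$; since $q$ is irreducible over $\mathbb{Q}$ (roots $2$ and $-1$) and one checks $p(2) = -2 \neq 0$, $p(-1) = -8 \neq 0$, the polynomials are coprime, so $\mathrm{lcm}(p,q) = p(x)q(x)$, a degree-$5$ polynomial.

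First I would compute $p(x)q(x) = (x^3 - 2x^2 + x - 4)(x^2 - x - 2)$ explicitly; expanding gives $x^5 - 3x^4 + x^3 - x^2 + 2x + 8$, so the associated recurrence is exactly $s_n = 3s_{n-1} - s_{n-2} + s_{n-3} - 2s_{n-4} - 8s_{n-5}$, matching the statement. Next I would record, using the formulas already proved in the excerpt, that each of $LX_n$, $HX_n$, $LRX_n$, $HR_n$, $LHR_n$ is a $\mathbb{Z}$-linear combination of terms of the form $L_{n-i}$, $H_{n-i}$, $J_{n-i}$ for bounded shifts $i$: indeed $LX_n = L_n - L_{n-1} - 2J_n$, $HX_n = H_n - J_{n+2}$, $LRX_n = L_{n-1} - J_{n+1}$, $HR_n = H_n + L_{n-1} - J_{n+1}$, and $LHR_n = L_n + H_n - J_{n+2}$. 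Each shifted sequence $n \mapsto L_{n-i}$ satisfies $p$, each $n \mapsto H_{n-i}$ satisfies $p$, and each $n \mapsto J_{n\pm i}$ satisfies $q$; hence every one of them is annihilated by the operator with characteristic polynomial $p(x)q(x)$, and so is any linear combination. This gives the result for all $n$ large enough that all the shifted indices involved are in the valid range; a finite check of the initial terms (readily read off from Table~\ref{table:LHR} and the explicit lists of $LX_n$, etc.) extends it to all $n$ in the intended range.

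The main obstacle, and the only place requiring genuine care, is the coprimality/lcm argument: one must be sure that the degree-$5$ recurrence is actually correct and not merely an upper bound on the order, i.e.\ that no lower-order recurrence is being claimed, and conversely that $p(x)q(x)$ genuinely annihilates everything in sight. The clean way to handle this is operator-theoretic: let $E$ denote the shift operator, note $p(E)$ kills every shift of $L_n$ and of $H_n$ and $q(E)$ kills every shift of $J_n$, so $p(E)q(E) = q(E)p(E)$ kills all of them and hence all the sequences in question. One should also double-check the expansion of $p(x)q(x)$ termwise, since an arithmetic slip there is the most likely source of error; but this is a routine polynomial multiplication. I would close by remarking that this also explains why the sequences $L_n$, $H_n$ themselves (which satisfy the order-$3$ recurrence) a fortiori satisfy the order-$5$ one, and why the extra factor $q$ is exactly what is needed to absorb the Jacobsthal corrections appearing in Lemma~\ref{lemma:lhr-formulas}.
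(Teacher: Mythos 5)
Your proposal is correct and follows essentially the same route as the paper: the paper likewise observes that each sequence is a linear combination of (shifted) $L_n$, $H_n$, and $J_n$ and takes the product of the characteristic polynomials $x^3-2x^2+x-4$ and $x^2-x-2$ to get $x^5-3x^4+x^3-x^2+2x+8$. Your coprimality/lcm discussion is harmless but unnecessary for the statement as claimed, since the product of annihilating polynomials always annihilates a linear combination whether or not the factors are coprime.
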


\begin{proof}
These sequences are linear combinations of sequences $L_n/H_n$ and $J_n$ that are recurrences with the characteristic polynomials $x^3-2x^2+x-4$ and $x^2 - x -2$, correspondingly. Therefore, the characteristic polynomial of their linear combination is the product of the polynomials \cite{genfunct}:
$$x^5-3x^4+x^3-x^2+2x+8.$$
\end{proof}

\subsection{Matching coins to outcomes}

We are now ready to produce a bound; we just need to define the following list of inequalities as $lhr_n$ inequalities.

\begin{equation} \label{eq-lhr-n}
\begin{split}
l & \leq L_n,\\
r & \leq R_n,\\
h & \leq H_n,\\
l+h & \leq LH_n,\\
h+r & \leq HR_n,\\
l+r & \leq LR_n,\\
l+h+r &\leq LHR_n.
\end{split}
\end{equation}

\begin{remark}
Two of the above inequalities are redundant. The first statement follows from the sixth statement. Indeed, $l \leq l+r \leq LR_n = L_n$. Similarly, the fourth statement follows from the last statement: $l+h \leq l+h+r \leq LHR_n = LH_n$. This is due to the fact that outcomes corresponding to the real state can also correspond to the light state.
\end{remark}

\begin{theorem}\label{thm:lhr-inequalities}
If the  $l:h:r$ state can be solved in $n$ weighings, then $l$, $h$, $r$, and $n$ satisfy $lhr_n$ inequalities.
\end{theorem}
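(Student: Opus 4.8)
The plan is to show that each of the $lhr_n$ inequalities is a necessary condition, by exhibiting, for each inequality, a specific adversarial restriction of the problem that reduces it to counting outcomes of a known type. The engine throughout is the same principle used repeatedly above: in any successful strategy (adaptive or oblivious), distinct coins receive distinct outcomes, and the outcome of a coin is constrained by its starting state via the rules of Lemma on outcome patterns. So the number of coins assignable to a given starting state cannot exceed the number of admissible outcomes for that state.

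First I would dispatch the three "single-group" inequalities $l \le L_n$, $h \le H_n$, $r \le R_n$. Suppose the $l{:}h{:}r$ state is solved in $n$ weighings. Restrict attention to the sub-population of the $l$ coins: by definition, if the fake coin is one of them it starts in the light state, so its self-outcome lies in $\mathcal{L}_n$. Since distinct coins have distinct outcomes (Lemma on distinct self-itineraries, together with the observation that the self-itinerary plus the known state determines the outcome), the $l$ coins inject into $\mathcal{L}_n$, giving $l \le L_n$. The same argument with $\mathcal{H}_n$ and $\mathcal{R}_n$ gives $h \le H_n$ and $r \le R_n$. Next, the "pair" inequalities $l+h \le LH_n$, $h+r \le HR_n$, $l+r \le LR_n$: here I combine two groups. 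The $l$ coins together with the $h$ coins, if fake, start in the light or heavy state, hence have self-outcomes in $\mathcal{L}_n \cup \mathcal{H}_n = \mathcal{LH}_n$; distinctness of outcomes across the whole strategy (not just within one group) gives $l + h \le LH_n$. Identically, $h+r \le HR_n$ using $\mathcal{H}_n \cup \mathcal{R}_n$, and $l+r \le LR_n$ using $\mathcal{L}_n \cup \mathcal{R}_n$. Finally the full inequality $l+h+r \le LHR_n$ follows the same way with $\mathcal{L}_n \cup \mathcal{H}_n \cup \mathcal{R}_n = \mathcal{LHR}_n$. As the Remark notes, $l \le L_n$ and $l+h \le LH_n$ are actually implied by later inequalities, so strictly only five need a separate argument, but it costs nothing to state all seven.

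The one genuinely delicate point — and the step I expect to be the main obstacle — is justifying that a coin's self-outcome really is forced into the claimed set $\mathcal{L}_n$, $\mathcal{H}_n$, or $\mathcal{R}_n$ even in an \emph{adaptive} strategy, and that the map "coin $\mapsto$ self-outcome" is injective across the union of several groups. The subtlety is that in an adaptive strategy a coin has a whole tree of possible itineraries; one must pin down that there is nonetheless a single well-defined self-outcome (the outcome string actually produced when that coin is the fake one, following the strategy), that this self-outcome obeys the parity-of-imbalance constraints of the outcome-pattern lemma dictated by the coin's assigned starting state, and that two distinct coins cannot produce the same self-outcome — otherwise the strategy could not distinguish them. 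All three facts are established in spirit in Section~\ref{sec:defs} (the discussion of self-itineraries and of how an imbalance in a weighing pins down which pan the fake is on, given the state), so the work here is to assemble those observations into a clean statement rather than to prove anything new. Once that is in hand, every inequality is a one-line counting injection.

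I would organize the write-up as: (1) a short paragraph recalling that in any solving strategy each coin has a self-outcome, that it lies in the admissible-outcome set determined by its assigned starting state, and that distinct coins have distinct self-outcomes; (2) seven one-sentence applications of this to the seven subsets, each producing one inequality; (3) a remark that two are redundant per the earlier Remark. No computation is required beyond invoking $\mathcal{LR}_n = \mathcal{L}_n$ and $\mathcal{LHR}_n = \mathcal{LH}_n$, which are already recorded above.
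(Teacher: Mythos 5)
Your proposal is correct and is essentially the paper's own argument: the paper proves this theorem in one sentence ("the number of coins cannot be greater than the number of the possible outcomes corresponding to them"), which is exactly the counting injection of coins into the admissible outcome sets that you spell out for each of the seven subsets. Your more careful discussion of why self-outcomes are well-defined and injective even for adaptive strategies is a faithful elaboration of the groundwork already laid in Section~\ref{sec:defs}, not a different route.
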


\begin{proof}
The number of coins cannot be greater than the number of the possible outcomes corresponding to them.
\end{proof}

Now we shall match the coins to outcomes.

\begin{lemma}\label{lemma:lhr-coins2outcomes}
If the $lhr_n$ inequalities hold for numbers $l$, $h$, $r$, and $n$, then we can match the coins to the outcomes of length $n$ in such a way that each coin of a particular type is matched to an outcome that is allowed for this type.
\end{lemma}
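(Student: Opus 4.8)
The plan is to match coins to outcomes greedily, processing the exclusive groups of outcomes in order from the most restrictive (those usable by the fewest states) to the least restrictive. Recall from the $X$-group lemma that the outcomes partition into $\mathcal{LX}_n$ (light only), $\mathcal{HX}_n$ (heavy only), $\mathcal{LRX}_n$ (light or real, not heavy), $\mathcal{LHX}_n$ (light or heavy, not real), and $\mathcal{LHRX}_n$ (any state); and $\mathcal{R}_n = \mathcal{LRX}_n \cup \mathcal{LHRX}_n$. The $r$ coins are the fussiest, since only the $\mathcal{LRX}_n$ and $\mathcal{LHRX}_n$ outcomes can serve them; then the $h$ coins, which need $\mathcal{HX}_n$, $\mathcal{LHX}_n$, or $\mathcal{LHRX}_n$; then the $l$ coins, which can take any leftover outcome. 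So the procedure is: first assign the $r$ coins, using up $\mathcal{LRX}_n$ first and spilling into $\mathcal{LHRX}_n$ only if necessary; then assign the $h$ coins, using $\mathcal{HX}_n$ first, then $\mathcal{LHX}_n$, then whatever remains of $\mathcal{LHRX}_n$; then assign the $l$ coins to all remaining outcomes. The first task is to check that at each stage there is enough room, and the room conditions are exactly consequences of the $lhr_n$ inequalities.

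Concretely, the first feasibility check is $r \le R_n = LRX_n + LHRX_n$, which is the second inequality. After the $r$ coins are placed, the number of $\mathcal{LHRX}_n$ outcomes consumed is $\max(0,\, r - LRX_n)$, so the number of outcomes still available to the $h$ coins is $HX_n + LHX_n + LHRX_n - \max(0,\, r-LRX_n)$; I need this to be at least $h$. When $r \le LRX_n$ this reads $h \le H_n$ (the third inequality); when $r > LRX_n$ it reads $h + r \le HX_n + LHX_n + LHRX_n + LRX_n = H_n + LRX_n = H_n + L_{n-1} - J_{n+1} = HR_n$ by Lemma~\ref{lemma:lhr-formulas}, i.e.\ the fifth inequality. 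Finally, after placing the $r$ and $h$ coins, the remaining outcomes number $LHR_n - r - h$ (every outcome consumed, none double-counted), and I need $l \le LHR_n - r - h$, i.e.\ $l + h + r \le LHR_n$, the seventh inequality. So the $lhr_n$ inequalities are precisely what make the greedy assignment go through. The redundancy remark ($l \le L_n$ follows from $l+r \le LR_n$, and $l+h \le LH_n$ from $l+h+r \le LHR_n$) explains why we do not need the first and fourth inequalities separately.

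I expect the main obstacle to be bookkeeping rather than ideas: one must be careful that the ``spillover'' cases are handled consistently — in particular verifying that when $r$ eats into $\mathcal{LHRX}_n$, there is still nothing forcing the $h$ coins to compete with leftover $r$ demand beyond what the $HR_n$ inequality allows, and symmetrically that the $l$ coins never get squeezed except through the $LHR_n$ bound. A clean way to organize this is to split into cases according to which of $r \le LRX_n$ and $r + \max(0,r-LRX_n)\text{-adjusted } h \le H_n$ hold, and in each case exhibit the relevant inequality from \eqref{eq-lhr-n} (via Lemma~\ref{lemma:lhr-formulas}) as the one guaranteeing room. One should also note, for use in the later oblivious-strategy theorem, that all groups except $\mathcal{LHRX}_n$ have an even number of outcomes (as remarked after the $X$-group lemma), so the assignment can additionally be arranged to respect conjugate pairs except possibly for one self-conjugate outcome; but for the present lemma, which only asserts a type-respecting matching exists, the greedy count above suffices.
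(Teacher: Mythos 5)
Your overall plan---a greedy assignment over the exclusive $X$-groups, processing the most constrained coin types first---is the same idea as the paper's proof, just in a different order (the paper places $h$-coins, then $l$-coins, then $r$-coins; you go $r$, $h$, $l$; either order can be made to work). Your verification of the $r$-stage and the $h$-stage is correct: $r \le R_n = LRX_n + LHRX_n$, and then either $h \le H_n$ or, in the spillover case, $h + r \le H_n + LRX_n = HR_n$ via the formula $HR_n = H_n + L_{n-1} - J_{n+1}$.

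The gap is in the $l$-stage. You assert that the $l$-coins ``can take any leftover outcome,'' so that the only condition needed is $l \le LHR_n - h - r$, i.e.\ the seventh inequality. This is false: the outcomes in $\mathcal{HX}_n$ are usable \emph{only} by $h$-coins, and $\mathcal{L}_n$ omits exactly $\mathcal{HX}_n$; for $n \ge 4$ this set is nonempty (e.g.\ the eight outcomes of pattern $\neq = \neq \neq$ when $n=4$). Consequently, when $h < HX_n$, the $HX_n - h$ unused heavy-only outcomes are dead weight for the $l$-coins, and the number of outcomes actually available to them is $L_n - r$, not $LHR_n - h - r$. The condition you need in that case is $l + r \le LR_n = L_n$---the \emph{sixth} inequality, not the seventh---so your closing remark that ``the $l$ coins never get squeezed except through the $LHR_n$ bound'' is precisely the wrong claim. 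The repair is routine: split on whether $h \ge HX_n$. If so, all of $\mathcal{HX}_n$ is consumed and your count $l \le LHR_n - h - r$ is valid; if not, invoke $l + r \le LR_n$ instead. Since both inequalities are part of the hypothesis, the lemma still follows, but as written your counting step does not establish it.
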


\begin{proof}
We start by assigned $h$-coins moving from more-exclusive to less-exclusive sets. First, we assign as many coins as possible to $\mathcal{HX}_n$, then to $\mathcal{LHX}_n$, and finally to $\mathcal{LHRX}_n$. 

We use the same principle for the $l$-coins. First, we assign them to $\mathcal{LX}_n$ and $\mathcal{LHX}_n$, in order to leave as much space as possible for the $r$-coins. It does not matter in which order we use these two sets; let us say that we start with $\mathcal{LX}_n$.

After that, we assign the $l$-coins to the leftover $\mathcal{LRX}_n$ and $\mathcal{LHRX}_n$ groups in order. We need to prove that we do not run out of outcomes for the $l$-coins. Suppose $h \leq HX_n$. Then, neither of the $h$-coins impose on other coins, and we have at least $L_n$ outcomes available for the $l$-coins. Suppose $h - HX_n = x > 0$; then, $l+r \leq LR_n - x$, and we have $LR_n -x$ outcomes available.

We assign the leftover outcomes to the $r$-coins. We can assume that the order is $\mathcal{LRX}_n$ followed by $\mathcal{LHRX}_n$.

Again, we need to show that there are enough outcomes left. If the $l$-coins and $h$-coins do not spill over into $\mathcal{LRX}_n$ and $\mathcal{LHRX}_n$ groups, we have $R_n$ outcomes available for the $r$-coins. They can only spill over if $\mathcal{LX}_n$ is completely assigned. Also, if $\mathcal{LX}_n$, $\mathcal{HX}_n$ and $\mathcal{LHX}_n$ are completely used, the number of coins that is left over is not greater than the number of outcomes that are left over, and so we can match them. 

We are left with two cases: a) $l < LX_n$ and $h > HX_n + LHX_n$, or b) $l > LX_n+LHX_n$ and $h < HX_n$. Consider the first case. If $h - HX_n - LHX_n = x > 0$, then $r < R_n - x$, and the number of available outcomes is the same. The second case is similar. 
\end{proof}

The $lhr_n$ inequalities are not sufficient for a strategy to exist. Consider, for instance, the case of $k=1$ and $l=r=h=1$. The values $l$, $h$, and $r$ match the inequalities. On the other hand, a one-weighing strategy does not exist. The reason it does not work is that we have to assign the $=$ outcome to the real coin, and the imbalances to the $l$ and $h$ coins. Then, $l$ and $h$ coins need to be on the same pan in the first weighing, and we do not have other coins to balance them.

\subsection{An oblivious strategy}

Now we will show that $lhr_n$ inequalities are almost enough for an oblivious strategy to exist. Namely, if the inequalities hold and we have two spare genuine coins, then we can find an oblivious strategy.

\begin{theorem}\label{thm:obliviousexistence}
Suppose the $lhr_n$ inequalities hold for numbers $l$, $h$, $r$, and $n$. If $m$ of the numbers $l$, $h$, and $r$ are odd, then it is enough to have $m-1$ extra, genuine coins for an oblivious strategy with $n$ weighings to exists.
\end{theorem}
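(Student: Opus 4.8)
The plan is to build the oblivious strategy by combining the coin-to-outcome matching from Lemma~\ref{lemma:lhr-coins2outcomes} with the itinerary-construction rules already established for the known light, heavy, and real states, while using the extra genuine coins to repair the parity obstructions. Recall that conjugate outcomes produce conjugate itineraries under those rules, and a self-conjugate set of itineraries always yields a legitimate oblivious strategy by Corollary~\ref{cor:scstr}. So the whole problem reduces to: after distributing the coins into the exclusive outcome-classes as in Lemma~\ref{lemma:lhr-coins2outcomes}, can we choose the actual outcomes so that the resulting multiset of itineraries is self-conjugate (or at least balances on every pan)?

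First I would recall that every exclusive outcome class $\mathcal{LX}_n$, $\mathcal{HX}_n$, $\mathcal{LRX}_n$, and $\mathcal{LHX}_n$ has an even number of outcomes, and each is closed under conjugation, while $\mathcal{LHRX}_n$ contains the single self-conjugate all-$=$ outcome together with an even number of conjugate pairs. Moreover, for a coin in a \emph{fixed} known state, distinct outcomes give distinct itineraries, and a conjugate pair of outcomes gives a conjugate pair of itineraries; the only self-conjugate itinerary (all O's) arises only from the all-$=$ outcome. So if all of $l$, $h$, $r$ are even, I can assign every coin's outcomes in conjugate pairs \emph{within the same state}, getting a self-conjugate itinerary multiset and an honest strategy with no extra coins — consistent with $m=0$. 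When exactly one of them is odd ($m=1$), I still have no extra coins to use, but then the all-$=$ outcome/itinerary can absorb the lone unpaired coin: assign it to the state whose count is odd (the $=$ outcome is legal for all three states since $\mathcal{R}_n$-outcomes start with $=$), and pair everything else conjugately. That handles $m\le 1$ with $m-1=0$ extra coins, but wait — for $m=1$ we need $0$ extra coins and for $m=0$ we need $-1$, so the statement's content is really the cases $m=2$ and $m=3$.

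The heart of the argument is $m=2$ (needing one extra genuine coin) and $m=3$ (needing two). Here the obstruction is that after pairing conjugately within each state we are left with one unpaired coin in each of two (or three) states, and we have only \emph{one} self-conjugate outcome to absorb a single leftover. The fix: take a genuine coin, give it the all-$=$ itinerary as a ``partner reservoir'' is not enough; rather, I would pair a leftover $l$-coin with a leftover $h$-coin (or $r$-coin) by assigning them a conjugate pair of outcomes appropriate to their \emph{respective} states, so that their itineraries are conjugate and the pans stay balanced — this is exactly the device used in Section~\ref{sec:lr-mixed} where the leftover $l$-coin got $<===\ldots$ and the leftover $r$-coin got the self-conjugate-adjacent itinerary, with one genuine coin thrown in to balance the odd residue. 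Concretely, for $m=2$: assign the all-$=$ outcome to the genuine extra coin's ``slot'', use it plus the two leftover coins of the two odd-count states to form a balanced triple of weighings (e.g. leftover-$l$ gets an itinerary starting L with the rest matching the $=$ pattern, leftover-$h$ gets the conjugate starting R, and the genuine coin sits out), and pair all remaining coins conjugately within their states. For $m=3$ we have three leftovers and two genuine coins; pair two of the leftovers against each other as above using one genuine coin, and the third leftover takes the genuine all-$=$ slot outright — but since that third leftover is in an odd-count state it can take the all-$=$ outcome only if that outcome is still available, which it is because in the $m=3$ case we did not use it for the first pair. The remaining genuine coin patches any pan imbalance created.

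I would organize the write-up as: (1) reduce to showing the itinerary multiset can be made self-conjugate up to balanced pairs; (2) handle the ``bulk'' coins by conjugate pairing within each state via Lemma~\ref{lemma:lhr-coins2outcomes}'s class structure and the parity observation that all exclusive classes but $\mathcal{LHRX}_n$ are even; (3) dispose of the $\le 1$ leftover using the all-$=$ slot; (4) for each additional odd state, burn one genuine coin to cross-pair a leftover against the all-$=$ slot or against another state's leftover, exactly mirroring the $l{:}r$ construction of Section~\ref{sec:lr-mixed}; (5) verify that in every case the total coin count still respects the $lhr_n$ inequalities after the genuine coins are added (they are extra, so $l+h+r$ is unchanged and we only need the outcome budget, which Lemma~\ref{lemma:lhr-coins2outcomes} supplies with room to spare because we never exceed $LHR_n$). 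The main obstacle I anticipate is step (4): making fully explicit the cross-pairing of a leftover $l$-coin with a leftover $h$-coin so that (a) their outcomes are legal for their respective distinct starting states, (b) the induced itineraries are genuinely conjugate so the pans balance, and (c) no two coins end up with identical itineraries — this needs a careful but elementary case check on where the first imbalance sits, and is the one place where the $m=2$ and $m=3$ cases genuinely diverge and where the precise count of needed genuine coins ($m-1$, not $m$) gets pinned down.
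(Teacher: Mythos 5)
Your skeleton matches the paper's up to the decisive step, but the decisive step is exactly the one you leave unresolved, and the device you sketch for it does not work. You propose to dispose of the leftover coins from two (or three) odd-count states by giving them ``a conjugate pair of outcomes appropriate to their respective states, so that their itineraries are conjugate.'' Conjugate outcomes yield conjugate itineraries only for coins in the \emph{same} starting state; the itinerary-building rules differ between states, so a light coin with outcome $x$ and a heavy coin with outcome $\bar{x}$ generally get non-conjugate itineraries (e.g.\ the light coin with $<==$ gets LOO while the heavy coin with $>==$ gets LLO). If you instead force the itineraries to be conjugate, you run into outcome collisions: a light coin with itinerary LOO$\ldots$ and a heavy coin with itinerary ROO$\ldots$ both produce the outcome $<==\ldots$, so the strategy cannot tell them apart. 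The paper's resolution is different and is where the count $m-1$ actually comes from: one surplus coin is absorbed by the self-conjugate all-O itinerary (sitting in $\mathcal{LHRX}$, legal for any type), and \emph{each} of the remaining at most $m-1$ surplus coins is given an arbitrary legal itinerary whose conjugate is then assigned to one \emph{genuine} coin. The genuine coins are not imbalance ``patches''; they are the designated conjugate partners, one per surplus coin, which is why exactly $m-1$ of them suffice.

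A secondary gap: you assert that conjugate pairing within each state leaves at most one leftover per type because the exclusive classes other than $\mathcal{LHRX}_n$ have even size. That is not automatic from Lemma~\ref{lemma:lhr-coins2outcomes}: its assignment can leave several X-groups each holding an odd number of coins of the same type (and conjugation preserves X-classes, so you cannot pair across classes). The paper inserts an explicit rearrangement step --- moving a coin into a class with a spare outcome, or swapping it with a coin of another type --- to guarantee that each type has at most one odd class and that the odd class, if any, can be pushed into $\mathcal{LHRX}$ so the all-$=$ slot is usable. Without both this rearrangement and the genuine-coin-as-conjugate-partner device, the proof is incomplete.
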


\begin{proof}
Per Lemma~\ref{lemma:lhr-coins2outcomes}, there is a way to assign coins to outcomes so that the outcomes match the coin types. Suppose we have such an assignment. Now we want to change some of these assignments in such a way that for one coin type a maximum of one X-group has an odd number of coins of this type. In addition, we want one odd group, if it exists, to be in $\mathcal{LHRX}$.

We start with $h$-coins. Suppose group $\mathcal{HX}$ and another group have an odd number of $h$-coins. In this case, the $\mathcal{HX}$ group has an outcome that is not assigned, and we can move an $h$-coin from another group to this group. Suppose groups $\mathcal{LHX}$ and $\mathcal{LHRX}$ both have an odd number of $h$-coins assigned. If group $\mathcal{LHX}$ have extra outcomes available, we can move one $h$-coin from $\mathcal{LHRX}$ to $\mathcal{LHX}$. If there are no extra outcomes available, then there must be $l$-coins in this group. In this case, we can swap an $l$-coin in $\mathcal{LHX}$ with and an $h$-coin in $\mathcal{LHRX}$. After this procedure, the statement is true for $h$-coins.

Now we shall look at $l$-coins. Similarly to the $h$-case, we can make sure that $\mathcal{LX}$ and $\mathcal{LHX}$ are the only sets containing an odd number of $l$-coins or an even number of $l$-coins. If they both have an even number of $l$-coins, we look further. Suppose both $\mathcal{LRX}$ and $\mathcal{LHRX}$ have an odd number of $l$-coins. Then, group $\mathcal{LRX}$ either has extra space available or at least one $r$-coin assigned. We can either move one $l$-coin to $\mathcal{LRX}$ from $\mathcal{LHRX}$ or swap it with an $r$-coin. Note that we only swap if a group is full. In this case, we do not increase the number of odd groups for $r$-coins. 

We process the $r$-coins in the same way. 

In the end, if we have one odd group and $\mathcal{LHRX}$ does not have odd group, then $\mathcal{LHRX}$ has outcomes available, and we can move one of the odd groups there. 

Now we shall assign itineraries. A coin from an odd group in $\mathcal{LHRX}$ is assigned a self-conjugate itinerary. Other coins are grouped into pairs of the same type within the same X-group. Each pair is assigned conjugate outcomes, and therefore conjugate itineraries. We have no more than $m-1$ extra coins left. We assign extra coins from odd groups to some outcomes and generate some itineraries for them. We use genuine coins to match these extra itineraries with conjugate ones in order to allow for a legitimate weighing strategy.

As the outcomes are uniquely defined by the type of coin and its self-itinerary, the strategy works.
\end{proof}

We immediately see that sometimes we do not need extra, genuine coins.

\begin{corollary}\label{cor:mixedoblivious}
Suppose the $lhr_n$ inequalities hold for numbers $l$, $h$, $r$, and $n$. If at least two out of three numbers $l$, $h$ and $r$ are even, then there exists an oblivious strategy that finds the fake coin in $n$ weighings in the $l:h:r$ state.
\end{corollary}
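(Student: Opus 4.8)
The plan is to derive the corollary directly from Theorem~\ref{thm:obliviousexistence} by examining the quantity $m$, the number of odd values among $l$, $h$, and $r$. If at least two of these three numbers are even, then at most one of them is odd, so $m \leq 1$. Theorem~\ref{thm:obliviousexistence} then guarantees that $m-1 \leq 0$ extra genuine coins suffice for an oblivious strategy in $n$ weighings, i.e. no extra coins are needed at all. Thus the hypotheses of the theorem give exactly the conclusion of the corollary.

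Concretely, I would split into two cases. First, if all three of $l$, $h$, and $r$ are even, then $m = 0$, and the construction in the proof of Theorem~\ref{thm:obliviousexistence} produces, after the rebalancing of coins among the X-groups, a configuration in which every X-group contains an even number of coins of each type. Then every coin can be paired with another coin of the same type in the same X-group, conjugate outcomes (hence conjugate itineraries) are assigned to each pair, the full set of itineraries is self-conjugate, and Corollary~\ref{cor:scstr} yields a legitimate oblivious weighing strategy. Since outcomes are uniquely determined by the coin type together with its self-itinerary, the strategy identifies the fake coin.

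Second, if exactly one of $l$, $h$, $r$ is odd, then $m = 1$. The argument in Theorem~\ref{thm:obliviousexistence} shows we may arrange matters so that the single odd group sits inside $\mathcal{LHRX}$, whose outcome count $LHRX_n = J_{n+1}$ need not be even, and the lone leftover coin in that group is assigned the self-conjugate itinerary (the all-O string), which is itself legitimate on the scale. All remaining coins are again paired within their X-groups into conjugate pairs. The resulting itinerary multiset is self-conjugate, so Corollary~\ref{cor:scstr} applies and no genuine filler coins are required.

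There is essentially no obstacle here: the corollary is an immediate specialization of the preceding theorem, and the only thing to check is the bookkeeping that $m \leq 1$ forces $m - 1 \leq 0$, so that the "extra genuine coins" clause becomes vacuous. The one point worth a sentence of care is that the rebalancing procedure of Theorem~\ref{thm:obliviousexistence} really does terminate with the odd group (if any) landing in $\mathcal{LHRX}$ — but this is exactly what that proof already establishes, so it may simply be invoked.
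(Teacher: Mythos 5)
Your proof is correct and matches the paper's own (implicit) argument: the corollary is stated as an immediate consequence of Theorem~\ref{thm:obliviousexistence}, since at least two even values among $l$, $h$, $r$ force $m\leq 1$ and hence $m-1\leq 0$ extra genuine coins, i.e.\ none. The additional case analysis you give simply re-traverses the construction already carried out in that theorem's proof, so nothing further is needed.
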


We find a bound, and it is exact:

\begin{theorem}
If we can find the fake coin out of $N$ coins in the mixed state, in $w$ weighings, then $N \leq LHR_w = L_w+H_w - J_{w+2}$. Moreover, if $N=LHR_w$, then there exists a mix of numbers $l$, $h$, $r$, where $l+h+r = N$ and the coin can be found, in $w$ weighings, in an oblivious strategy.
\end{theorem}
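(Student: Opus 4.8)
The plan is to split the statement into its two halves. The upper bound $N \leq LHR_w$ is immediate from Theorem~\ref{thm:lhr-inequalities}: any solvable situation, in particular one where all $N$ coins are in the mixed state and each is assigned some starting state, gives rise to numbers $l$, $h$, $r$ with $l+h+r = N$ satisfying the $lhr_w$ inequalities, and the last of these reads $l+h+r \leq LHR_w$. The formula $LHR_w = L_w + H_w - J_{w+2}$ is then just Lemma~\ref{lemma:lhr-formulas}(2). So the real content is the second half: exhibiting a concrete decomposition $N = l+h+r$ with $N = LHR_w$ for which an oblivious strategy exists.

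For the construction I would choose the decomposition that makes the $lhr_w$ inequalities tight in the ``right'' places while keeping parities favorable, so that Corollary~\ref{cor:mixedoblivious} (no extra genuine coins needed when at least two of $l,h,r$ are even) applies directly. The natural candidate is to fill every exclusive $X$-group to capacity: assign all $LX_w$ outcomes of $\mathcal{LX}_w$ to $l$-coins, all $HX_w$ of $\mathcal{HX}_w$ to $h$-coins, and split the remaining mixed groups $\mathcal{LHX}_w$, $\mathcal{LRX}_w$, $\mathcal{LHRX}_w$ among the types. Since the first few $X$-group totals are known to be even ($LX_n$, $HX_n$, $LRX_n$, $LHX_n$ all even, only $LHRX_n = J_{n+1}$ possibly odd, as noted after the $X$-group lemma), one can arrange the split of the three mixed groups so that at most one of $l,h,r$ is odd — indeed, put all of $\mathcal{LHX}_w$ and all of $\mathcal{LRX}_w$ into, say, $l$ and $r$ respectively (even contributions), and let the lone possibly-odd group $\mathcal{LHRX}_w$ go to whichever type we like; then $l = LX_w + LHX_w + (\text{part of }LHRX_w)$, $h = HX_w$, $r = LRX_w + (\text{part of }LHRX_w)$, and only the type receiving the odd $\mathcal{LHRX}_w$ piece can be odd. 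With at least two of the three even, Corollary~\ref{cor:mixedoblivious} gives the oblivious strategy, provided the $lhr_w$ inequalities still hold — and they do, because every outcome of length $w$ has been used exactly once and the assignment respects the allowed-type constraint by construction (this is exactly the content of the assignment in Lemma~\ref{lemma:lhr-coins2outcomes}, now run to saturation).

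The main obstacle I anticipate is bookkeeping the parities of the $X$-group sizes cleanly enough to guarantee that the final $l,h,r$ have at most one odd value, and checking that after saturating the $X$-groups the reduced inequalities (the redundant ones $l \leq L_w$ and $l+h \leq LH_w$ aside) are all satisfied — in particular $h+r \leq HR_w$ and $r \leq R_w$, which constrain how much of $\mathcal{LHRX}_w$ and $\mathcal{LRX}_w$ may be handed to $r$. Concretely, $r \leq R_w = LRX_w + LHRX_w$ forces $r$ to consist only of outcomes from $\mathcal{LRX}_w \cup \mathcal{LHRX}_w$, which is consistent with the scheme above; and $h + r \leq HR_w$ should follow since $\mathcal{H}_w$ and $\mathcal{R}_w$-admissible outcomes together are exactly $\mathcal{HR}_w$ and we never assign an $h$- or $r$-coin outside it. Once these finitely many inequalities are verified (they reduce, via the $X$-group lemma and Lemma~\ref{lemma:lhr-formulas}, to identities plus the already-proved $R_n < H_n < L_n$), the theorem follows. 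A clean alternative, if the parity juggling is annoying, is to invoke Theorem~\ref{thm:obliviousexistence} with the two spare genuine coins and then observe that at $N = LHR_w$ one can always trade a genuine coin back in by absorbing it into a conjugate pair — but I expect the parity-aware choice above to avoid needing spares at all.
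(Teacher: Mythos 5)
Your proposal is correct and follows essentially the same route as the paper: the paper's proof takes the upper bound from the outcome-counting results and exhibits the explicit decomposition $l = LX_w + LHX_w$, $h = HX_w$, $r = LRX_w + LHRX_w$ (i.e., your scheme with all of $\mathcal{LHRX}_w$ handed to $r$), making $l$ and $h$ even so that Corollary~\ref{cor:mixedoblivious} applies. The only caution is that your ``part of $LHRX_w$'' phrasing in both $l$ and $r$ could, if read as a genuine split, make two of the numbers odd; committing the whole group to a single type, as you indicate later and as the paper does, avoids this.
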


\begin{proof}
The upper bound follows from Theorem~\ref{thm:itb}. To prove such an existence, we need to find an instance of $l$, $h$, and $r$ values satisfying the inequality conditions, with $l$ and $h$ being even. Here it is: $l = LX_w + LHX_w$, $h = HX_w$, and $r= LRX_w + LHRX_w$.
\end{proof}

For the unknown starting state, below, we need to pay special attention to the case $0:h:r$, that is when $l=0$. We already know that if at least one of $h$ and $r$ is even, and they satisfy the inequalities, then the oblivious strategy exists. We can also find an oblivious strategy in the following additional case.

\begin{lemma}\label{lemma:0hr}
Suppose the inequalities hold, and both $h$ and $r$ are odd. If $h < H_w$, then there exists an oblivious strategy of $w$ weighings.
\end{lemma}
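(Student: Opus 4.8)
The plan is to exhibit a self-conjugate set of itineraries, which by Corollary~\ref{cor:scstr} automatically corresponds to a legitimate oblivious strategy, and then to argue that the associated outcomes separate the coins. Since $l=0$, each $h$-coin must be matched to an outcome in $\mathcal{H}_w$ and each $r$-coin to an outcome in $\mathcal{R}_w$; the matched outcomes must all be distinct, and the only overlap $\mathcal{H}_w\cap\mathcal{R}_w=\mathcal{LHRX}_w$ is precisely where the unique self-conjugate outcome (the all-equalities string) lives.

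First I would record the parities implied by the formulas of this section: $H_w$, $R_w=L_{w-1}$, and $HR_w$ are all odd, while $HX_w$, $LHX_w$, $LRX_w$ are even and $LHRX_w=J_{w+1}$ is odd. Consequently, since $h$ and $r$ are both odd, $h+r$ is even, so the inequality $h+r\le HR_w$ already forces $h+r\le HR_w-1$; and $h<H_w$ forces $h\le H_w-2$.

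The core of the argument is a careful reservation. Let $z$ be the outcome consisting of one imbalance $>$ followed by $w-1$ equalities, let $\bar z$ be its conjugate (one $<$ followed by equalities), and let $e$ be the all-equalities outcome. Note $z\in\mathcal{LHX}_w$, so among the types present it can only serve a heavy-starting coin, and $e\in\mathcal{LHRX}_w$. Assign $z$ to one $h$-coin with the itinerary placing it on the left pan in weighing $1$ and outside afterwards, and assign $e$ to one $r$-coin with the itinerary placing it on the right pan in weighing $1$ and outside afterwards; one checks from the rules of Section~\ref{sec:3stateknown} that a heavy-starting coin with the former itinerary indeed produces $z$ and a real-starting coin with the latter indeed produces $e$, and that these two itineraries are conjugates of each other. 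It then remains to distribute the other $h-1$ (even) $h$-coins and $r-1$ (even) $r$-coins over the punctured pool $\mathcal{HR}_w\setminus\{z,\bar z,e\}$, respecting types and in conjugate pairs. Deleting $\{z,\bar z,e\}$ is conjugation-symmetric and removes a conjugate pair from $\mathcal{LHX}_w$ and the single outcome $e$ from $\mathcal{LHRX}_w$, so each of the disjoint sets $\mathcal{HX}_w$, $\mathcal{LHX}_w\setminus\{z,\bar z\}$, $\mathcal{LRX}_w$, $\mathcal{LHRX}_w\setminus\{e\}$ now has even size; and the capacities needed for the reduced instance, namely $h-1\le H_w-3$, $r-1\le R_w-1$, and $(h-1)+(r-1)\le HR_w-3$, are exactly what $h\le H_w-2$, $r\le R_w$, and $h+r\le HR_w-1$ give. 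So the allocation argument of Lemma~\ref{lemma:lhr-coins2outcomes}, now with every coin count even, matches the remaining coins to distinct outcomes in conjugate pairs, and by the itinerary rules each conjugate pair of outcomes yields a conjugate pair of itineraries. Adjoining the two reserved coins, the whole itinerary set is self-conjugate, Corollary~\ref{cor:scstr} applies, and since a coin's type together with its self-itinerary determines its outcome while all $h+r$ chosen outcomes are distinct, the strategy identifies the fake coin.

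I expect the main obstacle to be exactly that last bookkeeping step: verifying that the matching argument of Lemma~\ref{lemma:lhr-coins2outcomes} still runs on the punctured sets $\mathcal{H}_w\setminus\{z,\bar z,e\}$ and $\mathcal{R}_w\setminus\{e\}$ with no new parity obstruction surfacing in the shared group $\mathcal{LHRX}_w\setminus\{e\}$, and that it can be carried out entirely in conjugate pairs once every disjoint group and every coin count is even. The hypothesis $h<H_w$ (i.e.\ $h\le H_w-2$) is precisely what preserves the $\mathcal{H}$-side capacity after the deletion --- this is where the general-position bound of Theorem~\ref{thm:obliviousexistence}, which otherwise needs one extra genuine coin, is improved --- and the parity strictness $h+r\le HR_w-1$ is what preserves the joint capacity.
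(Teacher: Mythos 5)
Your proposal is correct and follows essentially the same route as the paper: pair off all but one coin of each type via conjugate outcome pairs within the conjugation-closed X-sets, and handle the two leftover odd coins by giving the extra $h$-coin a single-imbalance outcome realized by an itinerary with one on-scale letter and the extra $r$-coin the all-equalities outcome realized by the conjugate itinerary, with $h<H_w$ supplying the needed slack. Your version just makes the parity and capacity bookkeeping (puncturing the pool by $\{z,\bar z,e\}$ and rerunning the allocation of Lemma~\ref{lemma:lhr-coins2outcomes}) more explicit than the paper does.
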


\begin{proof}
When we assign $h$-coins to disjoint sets of outcomes, we can always make sure that the odd number of $h$-coins is either in $\mathcal{LHX}_w$, or in $\mathcal{LHRX}_w$ if $\mathcal{HX}_w$ and $\mathcal{LHX}_w$ are full. 

Given that $h < H_w$, there will be space left in $\mathcal{LHRX}_w$ for the $r$-coins. And so, we place the odd number of $r$-coins into $\mathcal{LHRX}_w$. 

Now we assign the outcomes in conjugate pairs to pairs of $h$- and $r$-coins. We use the self-conjugate itinerary for the extra $r$-coin, and either $<===\ldots$ or $=<==\ldots$ for the extra $h$-coin. We give the $h$-coin the itinerary, so it is on the scale exactly once, matching the imbalance. We assign the conjugate itinerary to the extra $r$-coin.
\end{proof}

\subsection{Examples}

\textbf{An adaptive strategy exists, but an oblivious one does not}

Consider the case when $l=7$, $h=1$, and $r=1$. In the first weighing of an adaptive strategy, we compare three $l$-coins against another three $l$-coins. If the weighing unbalances, then we can find one coin out of three on the light pan that will be in the heavy state for the next weighing. If the first weighing balances, we have the case of $l=h=r=1$, with 6 extra coins proven to be genuine, that can be solved by Theorem~\ref{thm:obliviousexistence}.

Now we want to prove that there is no oblivious strategy. The outcomes are divided into exclusive sets as follows: 

\begin{itemize}
\item $\mathcal{LX}$: $<<$, $<>$, $><$, $>>$,
\item $\mathcal{LHX}$: $<=$, $>=$
\item $\mathcal{LHRX}$: $=<$, $=>$, $==$.
\end{itemize}

If the $r$-coin is assigned to either $=<$ or $=>$, it must be on the scale in the first weighing, which is a contradiction, as we get an odd number of coins on the scale. Therefore, it must be assigned to $==$. 

The first group must consist of $l$-coins. Suppose the second group is also assigned to the $l$-coins. These coins must have itineraries LL, LR, RL, RR, LO, RO. Without loss of generality, we can assign $=<$ to the $l$-coin, and $=>$ to the $h$-coin. Then their itineraries must be OL and OL. We see that the second weighing has two more coins on the left pan. 

Now, suppose the second group consists of one $l$-coin and one $h$-coin. Without loss of generality, we have the following itineraries: LL, LR, RL, RR, LO, LO for the coins that participate in the first weighing, and again we have two more coins on one of the pans.

\section{A Light-Heavy-Real Coin: Unknown Starting State}\label{sec:3stateunknown}

The information-theoretical bound from Theorem~\ref{thm:itb} shows  that the number of coins we can process in $w$ weighings is not more than $(LHR_w +2)/3$. That means the following sequence gives the bound (zero-indexed): 

\[1,\ 1,\ 3,\ 7,\ 17,\ 41,\ 99,\ 236,\ \ldots.\]

Now we use parity considerations to refine our bound. Consider the first weighing, in which we put $2k$ coins on each pan. If the weighing unbalances, each of the coins on the scale has two outcomes that correspond to it, which start with an imbalance, depending whether the coin was on the lighter or the heavier pan. That means we can use no more than $4k$ outcomes which start with an imbalance. We leave it to the reader to prove that sequence $LHR_n$ modulo 4 is the sequence that alternates between 1 and 3. The total number of outcomes of length $n$ that start with an imbalance is $LHR_n - LHR_{n-1}$. This number is divisible by 2, but not by 4. Therefore we have two outcomes that we cannot assign. Keeping in mind that for zero and 1 weighings this argument does not work, as we do not need to put anything on the scale, our new bound is $LHR_w/3$, for $w > 1$: 

\[1,\ 1,\ 3,\ 6,\ 16,\ 41,\ 99,\ 235,\ \ldots.\]

We can check that with one weighing we can process not more than one coin. With two weighings we can process three coins by comparing the first and the second coin twice. Later, we will show an oblivious strategy that resolves 6 coins in three weighings, and an adaptive strategy that resolves 16 coins in four weighings. Starting from $n=5$, this bound is not achievable, as we shall see very soon.

\subsection{Unknown State. Adaptive Strategy}

Consider $u$ coins in the unknown state, which we want to solve in $w$ weighings. We denote this state as $0:0:0:u$. More generally, $l:h:r:u$ means we have $l$ coins, such that if one of them is fake, it must start in the light state---and similarly for other letters. We are trying to find the largest number of coins that we can process in $w$ weighings.

Let us look at the first weighing, after which we will know the state of all the coins on the scale. Suppose we have $x$ coins on each pan. If the weighing is unbalanced, then the fake coin was in either the light state on the lighter pan or in the heavy state on the heavier pan. After the first unbalanced weighing, we have a mixed state, as $0:x:x$. 

\begin{lemma}\label{lemma:0kk}
If $k> \min\{(HR_{w}-1)/2,R_w\}$ the case $0:k:k$ cannot be solved in $w$ weighings. Moreover, if $k \leq \min\{(HR_{w}-1)/2,R_w\}$, there exists an oblivious strategy that solves it in $w$ weighings.
\end{lemma}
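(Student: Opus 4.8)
The plan is to prove the bound and the oblivious construction separately, treating the $0:k:k$ state as a special mixed-known state where the two groups have equal size and no light coins.

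\textbf{The upper bound.} First I would establish why $k \le R_w$ is necessary. In the $0:k:k$ state, a fake coin in the $r$-group produces an outcome in $\mathcal{R}_w$, and there are only $R_w$ such outcomes, so the $r$-coins alone force $k \le R_w$. Next I would handle $k \le (HR_w-1)/2$. The $h$-coins require $\mathcal{H}_w$-outcomes and the $r$-coins require $\mathcal{R}_w$-outcomes, and all these outcomes lie in $\mathcal{HR}_w$, giving $h+r = 2k \le HR_w$ from the $lhr_n$ inequalities (Theorem~\ref{thm:lhr-inequalities}). To sharpen $2k \le HR_w$ to $2k \le HR_w - 1$ (i.e. $k \le (HR_w-1)/2$), I would invoke a parity argument on the first weighing, exactly analogous to the one used in Section~\ref{sec:lh-mixed} for the light-heavy mixed case and in the paragraph preceding this lemma: in the first weighing one must place equal numbers on each pan; an unbalanced outcome splits into conjugate outcomes, and since the set of $\mathcal{HR}_w$ outcomes starting with an imbalance is even in number but the number required to start the weighing is constrained by how many $h$- and $r$-coins sit on the scale, a parity mismatch forces us to discard at least one outcome. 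One should also dispatch small cases ($w=0,1$) by hand — for instance $0:1:1$ is unsolvable, matching $(HR_1-1)/2 = 1$ but with the genuine-coin obstruction.

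\textbf{The oblivious construction.} For $k \le \min\{(HR_w-1)/2, R_w\}$, I would build an oblivious strategy by specializing the machinery of Section~\ref{sec:3statemixed}. Since $l=0$ and $h=r=k$, I want to verify the $lhr_w$ inequalities hold for $(0,k,k)$: we need $k \le H_w$, $k \le R_w$, $2k = h+r \le HR_w$, and the remaining inequalities are trivial or follow. The inequality $k \le R_w$ is given; $k \le H_w$ follows since $R_w < H_w$ for $w>1$ (the relative-values lemma); and $2k \le HR_w-1 < HR_w$ is given. Now apply the mixed-state results: if $k$ is even, both $h$ and $r$ are even and Corollary~\ref{cor:mixedoblivious} delivers an oblivious strategy directly. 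If $k$ is odd, then both $h$ and $r$ are odd, so I would appeal to Lemma~\ref{lemma:0hr}, which covers exactly the $0:h:r$ case with both odd, provided $h < H_w$ — and indeed $h = k \le R_w < H_w$. So in every case an oblivious strategy exists.

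\textbf{Main obstacle.} The genuinely delicate part is getting the parity refinement from $2k \le HR_w$ down to $2k \le HR_w - 1$ airtight, since it requires knowing the first-weighing structure forces an even number of on-scale coins while the available imbalance-starting outcomes in $\mathcal{HR}_w$ have a parity that obstructs using all of them — this is where I would need the claim (flagged as "left to the reader" earlier) that $HR_n \bmod 4$ and the count $HR_n - HR_{n-1}$ of imbalance-initial outcomes behave correctly, together with the observation that $h$-coins and $r$-coins contribute asymmetrically to the two pans. I would also want to double-check that $\min\{(HR_w-1)/2, R_w\}$ is actually the binding quantity for all $w$ (the excerpt notes $HR_n$ crosses $L_n$ near $n=10$, so which term in the $\min$ dominates genuinely varies), but this does not affect the proof, only its interpretation.
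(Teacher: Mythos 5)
Your construction half is exactly the paper's: for even $k$ apply Corollary~\ref{cor:mixedoblivious}, for odd $k$ apply Lemma~\ref{lemma:0hr} via $h = k \le R_w < H_w$. The problem is in the upper-bound half. You correctly extract $k \le R_w$ and $2k \le HR_w$ from Theorem~\ref{thm:lhr-inequalities}, but then you flag the passage from $2k \le HR_w$ to $2k \le HR_w - 1$ as the ``genuinely delicate part'' and propose to prove it by a first-weighing pan-parity argument modelled on Section~\ref{sec:lh-mixed} and on the $LHR_n \bmod 4$ discussion. That machinery is both unnecessary and not actually carried out in your sketch. The refinement is immediate: $HR_w$ is odd for every $w$ (from $HR_n = LHR_{n-1} + 2R_{n-1}$ and the fact that $LHR_n = LHR_{n-1} + 2HR_{n-1}$ with $LHR_0 = 1$ is always odd), so the even integer $2k$ satisfying $2k \le HR_w$ automatically satisfies $2k \le HR_w - 1$. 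This one-line parity-of-an-integer observation is the paper's entire argument; no analysis of how $h$- and $r$-coins sit on the pans is needed, and pursuing one would be a detour with no guarantee of closing.

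A second, concrete error: you assert that $0{:}1{:}1$ is unsolvable in one weighing. It is solvable --- put the $h$-coin on one pan and the $r$-coin on the other; an imbalance (necessarily tilting toward the $h$-coin) identifies the $h$-coin as fake, and a balance identifies the $r$-coin. This is consistent with the lemma, since $\min\{(HR_1-1)/2,\,R_1\} = \min\{1,1\} = 1$; your claimed counterexample would contradict the very statement you are proving. Finally, your closing worry about which term of the $\min$ binds is harmless but moot: the lemma is stated and proved with the $\min$ as is, and Lemma~\ref{lemma:kn} later settles that $R_w$ dominates from $w \ge 5$.
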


\begin{proof}
The first statement follows from Theorem~\ref{thm:lhr-inequalities} and the inequalities $k \leq R_w$ and $k+k \leq HR_w$, keeping in mind that $HR_w$ is odd. For the second one, we need to remember that $R_w < H_w$ (for $w > 0$), and use Lemma~\ref{lemma:0hr}. For $w=0$, we have $k \leq \min\{(0,1\}$, and the statement is true.
\end{proof}

Consider the sequence $k_w = \min\{(HR_{w}-1)/2,R_w\}$ that provides the exact bound for solving the $0:k:k$ case in $w$ weighings:
\[0,\ 1,\ 2,\ 7,\ 18,\ 41,\ 99,\ 233,\ \ldots,\]

Starting from the fifth index, the sequence continues as $R_n$, as the following lemma proves.

\begin{lemma}\label{lemma:kn}
$2R_n < HR_n$ starting from $n \geq 5$.
\end{lemma}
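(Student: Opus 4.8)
The plan is to prove the inequality $2R_n < HR_n$ for $n \ge 5$ by reducing it to the linear-recursion machinery already set up in Lemma~\ref{lemma:AB} and its corollaries. Recall from Lemma~\ref{lemma:lhr-formulas} that $HR_n = H_n + L_{n-1} - J_{n+1}$, and from Lemma~\ref{lemma:lhr-recursion} that $R_n = L_{n-1}$. So the claim $2R_n < HR_n$ is equivalent to $2L_{n-1} < H_n + L_{n-1} - J_{n+1}$, i.e. to $L_{n-1} + J_{n+1} < H_n$. Thus everything comes down to showing $H_n - L_{n-1} - J_{n+1} > 0$ for $n \ge 5$.

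The second step is to bound each piece using inequalities already available. From Corollary~\ref{cor:2l-less-l} we have $2L_{n-1} < L_n$, hence (using $H_{n-1}$ via $L_n = L_{n-1} + 2H_{n-1}$) we can relate $H$ and $L$ growth. More directly, I would set up a vector $A = (a_0,a_1,\dots)$ so that $H_n - L_{n-1} - J_{n+1}$ is expressed as a fixed linear combination of shifted $L$/$H$ values and Jacobsthal values, and then apply Lemma~\ref{lemma:AB} (or Corollary~\ref{cor:A}) after checking the base cases. Concretely: write $H_n = H_{n-1} + 2R_{n-1} = H_{n-1} + 2L_{n-2}$, so $H_n - L_{n-1} = H_{n-1} + 2L_{n-2} - L_{n-1}$. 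Since $L_{n-1} = L_{n-2} + 2H_{n-2}$, this becomes $H_{n-1} + L_{n-2} - 2H_{n-2}$. Combined with $J_{n+1} = J_n + 2J_{n-1}$, the quantity $f_n := H_n - L_{n-1} - J_{n+1}$ satisfies the same order-5 recurrence noted earlier (product of the Tribonacci and Jacobsthal characteristic polynomials), so it suffices to verify $f_n > 0$ for five consecutive indices starting at $n=5$ and invoke the recurrence — or, more cleanly, to verify the three conditions of Lemma~\ref{lemma:AB} for an appropriate choice of $A$ and $B$ at a small value $m$.

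The cleanest execution is probably: take $A$ and $B$ encoding "$H_n - L_{n-1}$" and then separately bound $J_{n+1}$ against the gap. From Table~\ref{table:LHR} one reads $R_5 = 41$, $HR_5 = 87$, $R_6 = 99$, $HR_6 = 205$, $R_7 = 233$, $HR_7 = 497$, so $2R_n < HR_n$ holds with room to spare at $n = 5,6,7$, and one checks $n=8,9$ as well (or uses the order-5 recurrence which needs five seed values). Then the induction closes: assuming $2R_m < HR_m$ for $m = n-1,\dots,n-5$, the recurrence $s_n = 3s_{n-1} - s_{n-2} + s_{n-3} - 2s_{n-4} - 8s_{n-5}$ for the combined sequence is not sign-definite, so the bare recurrence is \emph{not} enough — this is the main obstacle. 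The honest route is therefore Lemma~\ref{lemma:AB}: set $A = (1,-2)$ acting on the $H$-sequence minus a shifted $L$ contribution so that $f_n$ appears as $A(H,n) + B(L,n-1) - (\text{Jacobsthal terms})$, split off the Jacobsthal part using $L_n > J_{n+3}$ (Corollary~\ref{cor:LJ}), and reduce to an inequality purely among $L_n$, $H_n$ that Corollary~\ref{cor:A} handles after a finite base-case check.

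The main obstacle, then, is not any single hard estimate but bookkeeping: the inequality is a "second-largest root beats a Jacobsthal term" statement, and the crude order-5 recurrence has mixed signs, so a naive induction fails. The fix is to route the argument through the monotone tools (Corollaries~\ref{cor:2l-less-l}, \ref{cor:3l-more-l}, \ref{cor:LJ} and Lemma~\ref{lemma:AB}), each of which was proved precisely to turn such statements into finite base-case verifications. Once $f_n = H_n - L_{n-1} - J_{n+1}$ is shown positive for $n \ge 5$ this way, the lemma follows immediately by substituting $R_n = L_{n-1}$ and $HR_n = H_n + L_{n-1} - J_{n+1}$.
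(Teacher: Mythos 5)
Your proposal is correct and follows essentially the same route as the paper: rewrite $2R_n < HR_n$ as $H_n - L_{n-1} > J_{n+1}$ via Lemma~\ref{lemma:lhr-formulas} and $R_n = L_{n-1}$, replace $J_{n+1}$ by $L_{n-2}$ using Corollary~\ref{cor:LJ}, and establish the remaining pure $L/H$ inequality $H_n - L_{n-1} > L_{n-2}$ with Lemma~\ref{lemma:AB} after a finite base-case check, handling $n=5,6$ directly since Corollary~\ref{cor:LJ} only kicks in from $n \geq 7$. The only difference is cosmetic: the paper names the intermediate inequality explicitly and records the seed checks ($H_2 > L_1 + L_0$ and $L_2 > H_2 + H_1$), whereas you leave the vectors $A$ and $B$ unspecified.
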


\begin{proof}
The inequality $2R_n < HR_n$ is equivalent to $H_n - L_{n-1}  > J_{n+1}$. We first prove that $H_n - L_{n-1}  > L_{n-2}$, for any $n$. According to Lemma~\ref{lemma:AB}, it is enough to show that $H_2 = 5 > L_1 + L_0 = 4$ and $L_2 = 9 > H_2+H_1 = 3+5$. Now we can manually check the statement for $n=5,6$ and remember that $L_{n-2} > J_{n+1}$ from Corollary~\ref{cor:LJ}.
\end{proof}

Now we know our first weighing.

\begin{lemma}
There exists an adaptive strategy that processes the maximum number of coins in $w$ weighings, such that the first weighing has $k_{w-1}$ coins on each pan.
\end{lemma}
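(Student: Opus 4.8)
The plan is to show that the described adaptive strategy exists by (1) proving that $k_{w-1}$ coins per pan is a legitimate first move given the total bound $LHR_w/3$ on the number of coins, and (2) verifying that each of the three possible outcomes of that first weighing leaves us with a state we already know how to solve in the remaining $w-1$ weighings. Let $N$ be the number of coins we wish to process, where $N \leq LHR_w/3$ (for $w>1$). We put $k_{w-1}$ coins on each pan. First I would handle the two unbalanced outcomes: by the analysis preceding the lemma, an imbalance leaves us in the mixed state $0:k_{w-1}:k_{w-1}$ on the $2k_{w-1}$ coins that were on the scale, and by Lemma~\ref{lemma:0kk} this is exactly solvable in $w-1$ weighings (with an oblivious strategy, even), since $k_{w-1} = \min\{(HR_{w-1}-1)/2, R_{w-1}\}$. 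So the two unbalanced branches are handled precisely by the definition of $k_{w-1}$.

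Next I would handle the balanced outcome. If the first weighing balances, all $2k_{w-1}$ coins on the scale are proven genuine, and we are left with $N - 2k_{w-1}$ coins still in the unknown state, i.e.\ the case $0:0:0:(N-2k_{w-1})$, to be solved in $w-1$ weighings, but now with $2k_{w-1}$ spare genuine coins available. The key arithmetic step is to check that $N - 2k_{w-1}$ does not exceed the maximum number of unknown-state coins processable in $w-1$ weighings. Using $N \leq LHR_w/3$ and the recursion $LHR_w = LHR_{w-1} + 2HR_{w-1}$, together with the fact that $k_{w-1}$ equals either $(HR_{w-1}-1)/2$ or $R_{w-1}$, one gets $N - 2k_{w-1} \leq LHR_{w-1}/3 + (\text{small correction})$; one then has to confront the parity/mod-4 refinement that cut the bound down from $LHR_{w-1}/3$ — this is where the argument becomes delicate, because the refined unknown-state bound is not simply $LHR_{w-1}/3$ for all residues. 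I expect to split into the cases $k_{w-1} = (HR_{w-1}-1)/2$ (small $w$, handled by hand for $w \le 5$ or so) and $k_{w-1} = R_{w-1}$ (all $w \ge 5$, via Lemma~\ref{lemma:kn}), and in the latter range show directly that $LHR_w/3 - 2R_{w-1}$ is at most the refined bound for $w-1$ unknown coins. Having the $2k_{w-1}$ genuine coins in hand is crucial for the induction, since the recursive subproblem may itself need spare genuine coins (as in Theorem~\ref{thm:obliviousexistence}); I would carry "number of available genuine coins" as part of the inductive hypothesis.

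Then I would assemble the induction: the base cases $w=0,1,2$ are checked directly ($w=1$: one coin; $w=2$: three coins by weighing coin~1 against coin~2 twice), and for $w>2$ the first weighing with $k_{w-1}$ coins per pan reduces every branch to a strictly smaller instance that the inductive hypothesis (or Lemma~\ref{lemma:0kk}) resolves. It remains only to note that $2k_{w-1} \leq N$, so the first weighing is actually performable, which follows since $k_{w-1}$ is roughly $R_{w-1} \approx LHR_{w-1}/9$ while $N$ can be as large as $LHR_w/3 \approx LHR_{w-1}$, comfortably more than $2k_{w-1}$.

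\textbf{Main obstacle.} The main difficulty is the balanced branch: reconciling the global information-theoretic/parity bound $LHR_w/3$ on $N$ with the refined bound on the residual unknown-state subproblem after subtracting $2k_{w-1}$. This is a pure (if slightly fiddly) computation with the sequences $L_n$, $H_n$, $HR_n$, $LHR_n$, and $J_n$, and one should expect to invoke Lemma~\ref{lemma:AB}/Corollary~\ref{cor:A} and Corollary~\ref{cor:LJ} to control the relevant inequalities for all large $w$, checking the finitely many small cases separately. Everything else — the unbalanced branches and the legitimacy of the move — follows essentially immediately from the definition of $k_{w-1}$ and Lemma~\ref{lemma:0kk}.
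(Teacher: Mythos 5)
Your handling of the unbalanced branches is fine and matches the paper, but the proposal has two genuine problems, both in the balanced branch. First, for an LHR-coin a balanced weighing does \emph{not} prove the on-scale coins genuine: if the fake coin is on the scale and the pans balance, it was in the real state during that weighing and has now cycled to the light state. So after a balance the correct residual state is the mixed state $2k_{w-1}:0:0:N-2k_{w-1}$, not $0:0:0:(N-2k_{w-1})$ with $2k_{w-1}$ spare genuine coins. Your ``key arithmetic step'' --- comparing $N-2k_{w-1}$ against the maximum number of purely unknown-state coins processable in $w-1$ weighings --- is therefore a computation about the wrong subproblem, and the ``spare genuine coins'' you plan to carry through the induction do not exist.

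Second, the overall plan --- fix $N\leq LHR_w/3$ and verify that this first weighing succeeds --- cannot be carried out, because that bound is not achievable for $w\geq 5$: the paper later shows the true maximum is $k_{w-1}+(L_{w-1}+1)/2$ (e.g.\ $39<41=LHR_5/3$ for $w=5$), so you would be trying to prove a false statement. The lemma does not assert that the strategy attains $LHR_w/3$; it asserts that \emph{whatever} the maximum is, some optimal strategy opens with $k_{w-1}$ coins per pan. What is needed (and what the paper supplies) is an exchange argument: (i) no first weighing may place more than $k_{w-1}$ coins per pan, since an imbalance would leave a state $0:x:x$ with $x>k_{w-1}$, unsolvable in $w-1$ weighings by Lemma~\ref{lemma:0kk}; and (ii) placing fewer coins is never better, because after a balance the state $l_1:0:0:u-l_1$ requires no more weighings than $l_2:0:0:u-l_2$ when $l_1>l_2$ --- converting unknown-state coins into light-state coins can only help. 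One then checks $2k_{w-1}\leq 2R_{w-1}=2L_{w-2}<L_{w-1}$ (Corollary~\ref{cor:2l-less-l}) so that the light-state coins created by a balance do not themselves overwhelm the remaining $w-1$ weighings. Your proposal contains neither the upper-bound direction nor the monotonicity step, so even after repairing the state bookkeeping it would not establish optimality of this first move.
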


\begin{proof}
Per Lemma~\ref{lemma:0kk}, the mixed state $0:x:x$ can be solved in $w$ weighings, if and only if $x \leq k_w$. That means we cannot put more than $k_{w-1}$ coins on each pan in the first weighing.

If the first weighing balances, we have, after the weighing, $2x$ coins in the light state and $u-2x$ coins in the unknown state. The total is $u$, and the number of weighings needed for $l_1:0:0:u-l_1$ is no more than the number of weighings needed for $l_2:0:0:u-l_2$, when $l_1 > l_2$. That means we should put as many coins on the scale in the first weighing as the first weighing allows. 

If the first weighing is balanced, we get the $2k_{w-1}:0:0:u-2k_{w-1}$ state to process. Note that $2k_{w-1} \leq 2R_{w-1} = 2L_{w-2}$. In $w-1$ weighings, we can process up to $L_{w-1}$ of $l$-coins. Per Lemma~\ref{cor:2l-less-l}, we can see that $2R_{w-1} < L_{w-1}$, and in general the $L_n$ sequence grows approximately as a geometric progression with a coefficient of 2.3. That means we can process $2k_{w-1}$ of $l$-coins in $w-1$ weighings, and have a little bit of extra room for more coins.
\end{proof}

\textbf{Example $w=4$}. We have already shown that we cannot process more than 16 coins in four weighings. Now, we will show an adaptive strategy for 16 coins. In the first weighing, we put 7 coins on each pan. If the weighing unbalances, we have a $0:7:7$ state. This case is solvable in two weighings via Lemma~\ref{lemma:0kk}. If the first weighing balances, we have only two coins in the unknown state and 14 coins in the light state after the weighing. 

For the second weighing, we put 5 coins in the light state on each pan. If the weighing unbalances, we have 5 coins in the heavy state, which we know how to resolve in two weighings. If the weighing balances, we have 4 leftover coins in the light state and 2 coins in the unknown state under suspicion. For the third weighing, we compare two $l$-coins and one $u$-coin on the left pan, and two $l$-coins and one genuine coin on the right pan. If the weighing unbalances, then the fake coin is one of the two $l$-coins on the lighter pan or an unknown coin on one of the pans. We can find the fake coins by comparing two former $l$-coins that are now in the heavy state. If the weighing balances, then the fake coin is one of the former unknown coins. Moreover, if the fake coin was on the scale, then it is currently in the light state, and comparing it with a genuine coin finds the fake coin.

Back to any number of coins. Now we know what we should in the first weighing, and also what to do if it unbalances. We proceed to the second weighing after the first balance. We assume that $w> 4$.

\begin{lemma}
There exists an adaptive strategy that processes the maximum number of coins in $w > 4$ weighings, such that the second weighing after the first balance has $H_{w-2}$ coins on each pan, and the coins that were not on the scale in the first weighing are as evenly distributed between pans as possible. Moreover, any second weighing described here, if it unbalances, can be processed in $w-2$ weighings.
\end{lemma}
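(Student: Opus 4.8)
The plan is to work out the second weighing directly, branch on its two possible outcomes, and read off from each branch both the constraints it forces and the residual mixed state it leaves. By the preceding lemma we may assume the first weighing compares $k_{w-1}$ coins with $k_{w-1}$ coins, and we analyze the branch in which it balances; there we are left with $2k_{w-1}$ coins in the light state and $U:=u-2k_{w-1}$ coins in the unknown state, $u$ being the maximum. Say the second weighing puts $a$ (resp.\ $a'$) light-state coins and $b$ (resp.\ $b'$) unknown coins on the left (resp.\ right) pan, so $a+b=a'+b'$. First I would record what an imbalance gives: if the left pan is the lighter one, then every light-state coin off the left pan and every unknown coin off the scale is exposed as genuine (a fake such coin could neither produce this imbalance nor be off the scale), while the $a+b$ coins on the lighter pan --- which, if fake, were light and are therefore now heavy --- and the $b'$ unknown coins on the heavier pan --- which, if fake, were heavy and are therefore now real --- survive as suspects. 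So an imbalance leaves the mixed state $0:(a+b):b'$ together with at least $2k_{w-1}-a>0$ genuine coins (here $k_{w-1}>H_{w-2}$), and $h:=a+b=a'+b'\ge b'=:r$.

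Next I would determine the pan size. By Theorem~\ref{thm:lhr-inequalities}, resolving $0:h:r$ in $w-2$ weighings requires $h\le H_{w-2}$, so each pan of the second weighing can carry at most $H_{w-2}$ coins; to keep as many coins as possible in play we take each pan to carry exactly $H_{w-2}$, so $h=H_{w-2}$. Under $h=H_{w-2}$ the remaining $lhr_{w-2}$ inequalities reduce, via Lemma~\ref{lemma:lhr-formulas}, to $r\le R_{w-2}=L_{w-3}$ and $r\le HR_{w-2}-H_{w-2}=L_{w-3}-J_{w-1}$, the latter being the stronger. Distributing the unknown coins as evenly as possible over the two pans makes the number of them on either pan equal to one half (rounded up) of the number actually used, and is also what keeps $r$ smallest in the worst case: with an uneven split the pan that turns out heavier could be carrying the larger of $b,b'$ as its real-state residue. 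So with the even split one has $r\le\lceil U/2\rceil$, and I would then verify $\lceil U/2\rceil\le L_{w-3}-J_{w-1}$ from the inductively available value of $u$ together with an inequality among $L_n,H_n,J_n$ that is exactly of the form Lemma~\ref{lemma:AB} (and Corollary~\ref{cor:A}) are built to establish. Since $l=0$ is even, $H_{w-2}$ is odd, and a spare genuine coin is on hand, at most two of $0,h,r$ are odd, so Theorem~\ref{thm:obliviousexistence} delivers an oblivious $(w-2)$-weighing strategy (and in the sub-case that $r$ is even, Corollary~\ref{cor:mixedoblivious} already suffices). This settles the ``moreover'' clause.

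The remaining task is to show that an optimal strategy may be assumed to have this shape, and I expect this to be the principal difficulty. The even split is obtained by a symmetrization/exchange argument: if $b\ne b'$, rebalance the two pans by swapping an unknown coin on the more heavily loaded side for a light-state coin on the other; this leaves the balanced-branch residual state unchanged (the same multiset of coins is weighed) and does not worsen the worst-case $r$ in the unbalanced branch. That the pans should be loaded all the way to $H_{w-2}$ comes from contrasting the branches: in the balanced branch a light-state coin on the scale is spent --- it becomes genuine --- whereas an unknown coin on the scale merely turns light, which is if anything advantageous, since coins in a known state are cheaper to resolve than coins in the unknown state; hence one wants to fill the pans with as many unknown coins as the unbalanced branch will tolerate, namely $\min\{U,\,2(L_{w-3}-J_{w-1})\}$ of them, and then top up to $H_{w-2}$ per pan with light-state coins --- which is possible because $k_{w-1}>H_{w-2}$, and consistent with the unbalanced branch because $H_{w-2}\ge L_{w-3}-J_{w-1}$, both again via Corollary~\ref{cor:A}-type checks. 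The genuinely delicate point is to convert these ``one wants'' observations into a clean exchange argument: one should introduce the auxiliary quantity ``the largest number of coins resolvable in $j$ weighings from a state with $\ell$ light-state and $m$ unknown-state coins,'' establish its monotonicity in $\ell$ and $m$, and use it to transport an arbitrary optimal strategy to the canonical one without loss. Everything else is branch bookkeeping and the linear inequalities among $L_n,H_n,R_n,J_n$ for which Lemma~\ref{lemma:AB} and its corollaries are the designed tool.
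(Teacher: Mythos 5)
Your proposal is correct and follows essentially the same route as the paper: branch on the second weighing, translate the unbalanced branch into the mixed state $0:t:u_i$ and its $lhr_{w-2}$ inequalities (with $t+u_i\le HR_{w-2}$ as the binding one), argue that an even split of the unknown coins together with topping up each pan to $H_{w-2}$ with light-state coins is optimal, and invoke Theorem~\ref{thm:obliviousexistence} with the spare genuine coins for the ``moreover'' clause. The two verifications you defer --- the inequality $\lceil U/2\rceil\le L_{w-3}-J_{w-1}$ and the clean form of the exchange argument --- are exactly the steps the paper supplies, the former explicitly via Corollary~\ref{cor:A}-type checks and the latter at essentially the same informal level you describe, and both go through.
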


\begin{proof}
First we show that such a weighing is possible. For $w > 4$ we have $k_{w-1} > H_{w-2}$. We can check this for $w=5$, and for $w > 5$ it follows from the fact that $k_n = R_n = L_{n-1} > H_{n-1}$. That means we have $2H_{w-2}$ coins available to put on the scale. 

Now we study the weighing. We have $2k_{w-1}$ of $l$-coins and $x \leq L_{w-1} - 2k_{w-1}$ of $u$-coins before this weighing. Suppose we put $u_1$ of $u$-coins on the left pan and $u_2$ of $u$-coins on the right pan, for a total of $u_t$ of $u$-coins on the scale. Suppose that the total number of coins on each pan is $t$. After a balance, the state is $2k_{w-1} + u_t - (2t - u_t):0:0:x-u_t$, or $2k_{w-1} - 2t + 2u_t:0:0:x-u_t$. That means that after a balance it does not matter how the $u$-coins were distributed on the scale, and it is preferable to increase the total number of coins by adding to the first weighing as many $l$-coins as the imbalance in this weighing would allow.

Let us now look at imbalances. If the weighing unbalances, we know that either one of the coins on the lighter pan was fake and in the light state, or that one of the $u$-coins on the heavier pan was fake and in the heavy state. After the weighing, we have a $0:t:u_i:0$ state. Per Theorem~\ref{thm:lhr-inequalities}, this case is solvable only if $t \leq H_{w-2}$, $u_i \leq R_{w-2}$ and $t + u_i \leq HR_{w-2}$. We see that it is beneficial to redistribute those $u$-coins as evenly as possible, as only the $\max\{u_1,u_2\}$ plays a role. We also see that it is acceptable to add as many $l$-coins as the equations allow.

Now we assume that the $u$-coins are almost evenly distributed on the pans, and see how they influence inequalities. Let us assume that we added $l$-coins to both pans so that the total is $H_{w-2}$. We need to show that the following inequalities hold: 
\[H_{w-2} \leq H_{w-2}, \quad u_i \leq R_{w-2} \quad \text{and} \quad H_{w-2} + u_i \leq HR_{w-2}.\] 

The first inequality is trivial.

For the second inequality, we observe that, after the first weighing, the number of coins in the unknown state is not more than $L_{w-1} - 2k_{w-1}$, which is less than $2R_{w-2}$. Again we check $w=5$ manually, and for $w > 5$ it follows from Corollary~\ref{cor:A} and the fact that $L_4 - 2L_{3} \leq 2L_{2}$, $L_3 - 2L_{2} \leq 2L_{1}$, and $H_4 - 2H_{3} \leq 2H_{2}$. That means that, even if we put all the $u$-coins on the scale, the second inequality still holds. As $R_{w-2} < H_{w-2}$, all the $u$-coins can be put on the scale while satisfying all the inequalities.

For the third inequality, let us use the fact that $u_i \leq (L_{w-1}+1)/2 - k_{w-1}$. We need to show that $H_{w-2} + (L_{w-1}+1)/2 - k_{w-1} \leq HR_{w-2}$. We can check it for $w=5$. For $w > 5$, this is equivalent to 
$H_{w-2} + (L_{w-1}+1)/2 - R_{w-1} \leq HR_{w-2}$. That is, we need to show that 
$$2H_{w-2} + L_{w-1}+1 - 2L_{w-2} \leq 2HR_{w-2} = 2H_{w-2} + 2L_{w-3} - 2J_{w-1}.$$ 
Or
$$ L_{w-1}+1 + 2J_{w-1} \leq  2L_{w-3} + 2L_{w-2}.$$ 

We know that $J_{w-1} < L_{w-4}$ by Corollary~\ref{cor:LJ}. That means it is enough to show 
$$ L_{w-1}+ 2L_{w-4} <  2L_{w-3} + 2L_{w-2}.$$ 
We can use the same technique we used before, so we need only to check that the following three statements are true: $L_{3}+ 2L_{0} <  2L_{1} + 2L_{2}$, $L_{4}+ 2L_{1} <  2L_{2} + 2L_{3}$, and $H_{4}+ 2H_{1} <  2H_{2} + 2H_{3}$. And they are true.

We conclude that for $w > 4$ we can put any number of $u$-coins on the scale, distribute them as evenly as possible, add more $l$-coins, so that the total is $H_{w-2}$; then, if the weighing imbalances, all the necessary inequalities hold. As we have at least two extra, genuine coins outside the scale, per Theorem~\ref{thm:obliviousexistence} an oblivious strategy exists that solves the $0:H_{w-2}:u_i:0$ state produced by an imbalance.
\end{proof}

How many $u$-coins should we put on the scale in the second weighing? What happens if we remove a $u$-coin from the scale and replace it with an $l$-coin? This would not make things worse for an imbalance. For a balance, the number of coins left to process decreases by 1, but the state of one coin changes from light to unknown. This means that, if after the balance we have the $a:0:0:b$ state, then after the swap we get the $a-2:0:0:b+1$ state. What is better? Here we present a lemma:

\begin{lemma}\label{lemma:l00u}
The state $l:0:0:u$ cannot be solved in $n$ weighings if $l+2u-1 > L_n$.
\end{lemma}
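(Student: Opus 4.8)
The plan is to use an information-theoretic counting argument refined by a conjugation/parity bookkeeping, exactly in the spirit of the bounds proved earlier (Theorem~\ref{thm:itb} and the parity refinements in the LR-coin section). The key observation is that in the $l{:}0{:}0{:}u$ state the $u$-coins are in the unknown state, so each $u$-coin corresponds to \emph{two} distinct outcomes (one for each starting state consistent with it becoming fake), while each $l$-coin corresponds to a single outcome. Because an unknown $u$-coin that is never placed on the scale is indistinguishable between the light and the ``it stays real forever'' readings, at most one $u$-coin can be matched to the unique self-conjugate outcome $===\cdots=$; every other $u$-coin must be matched to a conjugate pair of non-self-conjugate outcomes, and every $l$-coin to one outcome. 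Hence the total number of outcomes consumed is at least $2(u-1) + 1 + l = l + 2u - 1$ in the case a $u$-coin claims the all-$=$ outcome (and at least $l+2u$ otherwise, which is weaker-looking but the bound we want is the $l+2u-1$ one). Since the total number of possible outcomes of length $n$ for the LHR-coin is $LHR_n = L_n + H_n - J_{n+2}$, this already gives $l + 2u - 1 \le LHR_n$; but we claim the sharper bound $l+2u-1 \le L_n$.

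The sharpening comes from the structure of which outcomes a $u$-coin can actually realize. A $u$-coin that is fake can start in the light, heavy, or real state, so the pair of outcomes assigned to it must be realizable by a single physical coin under two of these three starting hypotheses. First I would argue that the two outcomes forming such a pair, together, cannot both carry ``heavy-only'' information: more precisely, if a $u$-coin is ever on the scale and the scale unbalances, we learn its state, so its two outcomes are conjugates of each other and correspond to the coin being on the light pan in one reading and the heavy pan in the conjugate reading. Combining this with the outcome-pattern rules of Lemma on LHR outcomes (an even imbalance from the light start, an odd imbalance from the heavy start, must be followed by $=$), one shows that the conjugate pair assigned to a $u$-coin is essentially ``pinned'' to the $\mathcal{L}$-outcome structure: effectively each genuine $u$-coin consumes two outcomes that both lie in (or map into) the $\mathcal{L}_n$-count, so that together with the $l$-coins and the single self-conjugate outcome the total is bounded by $L_n$ rather than merely by $LHR_n$. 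This is the step that uses the earlier inequalities $R_n < H_n < L_n$, $LHR_n = L_n + H_n - J_{n+2}$, and the Jacobsthal comparison; a clean way to phrase it is: the $l$-coins plus the ``light-hypothesis'' halves of the $u$-pairs together occupy $l + u$ distinct $\mathcal{L}$-outcomes, while the ``other-hypothesis'' halves occupy $u-1$ further distinct outcomes that are also forced into $\mathcal{L}_n$ (because a conjugate of an $\mathcal{L}$-outcome that is not all-$=$ is again an $\mathcal{L}$-outcome), giving $l + u + (u-1) = l + 2u - 1 \le L_n$.

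Concretely, the steps I would carry out, in order: (1) set up the outcome-counting framework, noting $u$-coins need two outcomes, one self-conjugate outcome is the unique exception, so $l+2u-1$ outcomes are needed if some $u$-coin takes the all-$=$ outcome; (2) prove that the two outcomes of a genuine on-scale $u$-coin are conjugate and, using the parity rules for imbalances after light/heavy starts, show both members of the pair belong to $\mathcal{L}_n$ (equivalently, neither member requires the heavy-exclusive pattern, since a $u$-coin always has a light-compatible reading and the conjugate of a non-trivial $\mathcal{L}$-outcome is an $\mathcal{L}$-outcome); (3) conclude $l + 2u - 1 \le |\mathcal{L}_n| = L_n$, i.e.\ the contrapositive of the lemma. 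I expect step (2) to be the main obstacle: one has to be careful that a $u$-coin's two realizing outcomes really are conjugate (this can fail to be literally true if the coin is never on the scale, but then it is the all-$=$ outcome, already handled) and that the light-start outcome of a $u$-coin never forces an outcome outside $\mathcal{L}_n$ — which is immediate since $\mathcal{L}_n$ is by definition the set of light-start outcomes — while the \emph{other} reading's outcome is the conjugate, and one must verify $\overline{\mathcal{L}_n \setminus \{===\cdots\}} \subseteq \mathcal{L}_n$, or more carefully that the union of a light-start outcome set with its conjugates still fits inside an $L_n$-sized box. If that containment needed a small correction, the fallback is to bound by $LHR_n$ and then invoke the relation $LHR_n = L_n + H_n - J_{n+2}$ together with the earlier corollaries to absorb the discrepancy, but I believe the direct conjugation argument gives exactly $L_n$.
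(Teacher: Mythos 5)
Your overall plan---charge one outcome in $\mathcal{L}_n$ to each $l$-coin and two to each $u$-coin, with a single exception for the all-$=$ outcome, and conclude $l+2u-1\leq L_n$---is a viable route and is genuinely different from the paper's proof. The paper instead inducts on the number of weighings: it bounds each of the two unbalanced branches of the first weighing by the mixed-state inequalities of Theorem~\ref{thm:lhr-inequalities} (an imbalance converts the suspects into an $h$-group and an $r$-group), bounds the balanced branch by the induction hypothesis, and sums the three inequalities using the recursion for $L_n$. Your direct count, once repaired, would be a clean alternative, provided it is phrased in terms of self-itineraries so that it also covers adaptive strategies (the lemma is used in the paper precisely to cap adaptive strategies).

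However, your key step (2) fails as stated. For the LHR-coin, the light-start and heavy-start readings of a coin with a given itinerary are \emph{not} conjugate outcomes: after its first appearance on the scale the coin evolves light$\to$heavy$\to$real in one reading and heavy$\to$real$\to$light in the other, so already the itinerary LL yields $<>$ from the light start but $>=$ from the heavy start. Consequently the claim that each $u$-coin's ``other-hypothesis'' outcome is the conjugate of its light outcome, and hence lies in $\mathcal{L}_n$ by conjugation-closure, is false; worse, the heavy-start reading genuinely lives in $\mathcal{H}_n$, which is \emph{not} contained in $\mathcal{L}_n$ (e.g.\ $<=<<$ is an $\mathcal{H}$-outcome but not an $\mathcal{L}$-outcome), so no conjugation device will pin it into the $L_n$ count. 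A $u$-coin here has \emph{three} admissible starting states, not two; the correct pair of outcomes to count is the light-start reading together with the \emph{real}-start reading, since the latter lies in $\mathcal{R}_n\subseteq\mathcal{L}_n$ (a containment recorded in Section~\ref{sec:3statemixed}) and differs from the former unless the coin is never on the scale, in which case both equal the all-$=$ outcome, which at most one coin may own. With that substitution your count $l+u+(u-1)\leq L_n$ goes through; as written, the conjugation argument does not.
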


\begin{proof}
We use induction. The base of induction is $n=1$. Consider the state $l:0:0:u$. We know that $l \leq 3$. That means we need to manually check the following cases:
\begin{itemize}
\item If $l=0$, then $u > 2$. The state $0:0:0:3$ cannot be solved.
\item If $l=1$, then $u > 1$. The state $1:0:0:2$ cannot be solved.
\item If $l=2$, then $u > 1$. The state $2:0:0:1$ cannot be solved.
\item If $l=3$, then $u > 0$. The state $3:0:0:1$ cannot be solved.
\end{itemize}

Now, the step of the induction. Suppose we already proved the bound $l+2u-1 \le L_n$ for $n$ weighings.

Suppose that, as before, on the pans we place $l_1$ and $l_2$ coins of $l$-type, and $u_1$ and $u_2$ coins of $u$-type correspondingly. Then, the following inequalities hold:

\begin{itemize}
\item $l_1+u_1 \le HR_{n}$ (so we can solve in $n$ weighings the case when the left pan is lighter),
\item $l_2+u_2 \le HR_{n}$ (so we can solve in $n$ weighings the case when the left pan is heavier),
\item $(l-l_1-l_2)+u_1+u_2 +2(u-u_1-u_2)-1 \le L_n$ (The state after the balance is $(l-l_1-l_2)+u_1+u_2 :0:0:u-u_1-u_2$, and the inequality is the induction assumption).
\end{itemize}

Summing up, we get $l +2u -1 \le L_n + 2HR_n = L_{n+1}$. The lemma is proven.
\end{proof}

Now we present a particular adaptive strategy, in which we put all but one $u$-coins on the scale after the first weighing balances. We also show that this strategy maximizes the number of processed coins.

\begin{theorem}
For $w > 4$, there exists an adaptive strategy of $w$ weighings for the state with $N$ unknown coins, if and only if $N \leq k_{w-1} + (L_{w-1}+1)/2$.
\end{theorem}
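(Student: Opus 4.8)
The plan is to prove the theorem in two directions: the upper bound (necessity of $N \leq k_{w-1} + (L_{w-1}+1)/2$) and the matching construction (an adaptive strategy achieving this bound). For the upper bound, I would argue as in the preceding lemmas: consider the first weighing, which must put some number $x$ of coins on each pan. If it unbalances, we reach a $0:x:x$ state, so by Lemma~\ref{lemma:0kk} we need $x \leq k_{w-1}$. If it balances, we reach a $2x:0:0:(N-2x)$ state, which by Lemma~\ref{lemma:l00u} requires $2x + 2(N-2x) - 1 \leq L_{w-1}$, i.e. $N \leq (L_{w-1}+1)/2 + x$. Combining, $N \leq (L_{w-1}+1)/2 + \min\{x, \text{feasible}\}$, and the best choice is $x = k_{w-1}$ (one must also check $k_{w-1}$ is an achievable value of $x$, i.e.\ that there are enough coins, which holds when $N$ is near the bound); this yields $N \leq k_{w-1} + (L_{w-1}+1)/2$.

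For sufficiency, I would exhibit the strategy already foreshadowed in the text: in the first weighing put $k_{w-1}$ coins on each pan. If it unbalances, we are in the $0:k_{w-1}:k_{w-1}$ state, solvable in $w-1$ weighings by Lemma~\ref{lemma:0kk} since $k_{w-1} \leq k_{w-1}$ trivially (the bound for $w-1$ weighings is exactly $k_{w-1}$). If it balances, we have $2k_{w-1}$ coins now in the light state and $N - 2k_{w-1}$ coins still unknown. Since $N \leq k_{w-1} + (L_{w-1}+1)/2$, the number of remaining unknown coins is $u = N - 2k_{w-1} \leq (L_{w-1}+1)/2 - k_{w-1}$, which by Lemma~\ref{lemma:kn} (for $w-1 \geq 5$, i.e.\ $w \geq 6$) equals $(L_{w-1}+1)/2 - R_{w-1} = (L_{w-1}+1)/2 - L_{w-2}$; one checks this is a small nonnegative quantity, and in particular $u \geq 1$ forces at least one genuine/light coin to spare. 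Now I would invoke the second-weighing lemma (the one immediately preceding this theorem): put all but one of the $u$ unknown coins on the scale, distributed as evenly as possible, pad with $l$-coins so each pan has $H_{w-2}$ coins. If this unbalances, the resulting $0:H_{w-2}:u_i:0$ state is solvable in $w-2$ weighings (as that lemma established, using Theorem~\ref{thm:obliviousexistence} and the two spare genuine coins). If it balances, we drop into an $l':0:0:1$ state with $l' = 2k_{w-1} - 2H_{w-2} + 2u_t$ coins in the light state and one coin still unknown; I would check via Lemma~\ref{lemma:l00u} that $l' + 2\cdot 1 - 1 \leq L_{w-2}$ so this is solvable in $w-2$ weighings (and treat the $w=5$ base case by hand, as the text does throughout).

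The main obstacle will be the bookkeeping that shows the second weighing leaves exactly one unknown coin and that the post-balance state $l':0:0:1$ still fits under $L_{w-2}$ — this requires chaining the inequalities $u \leq (L_{w-1}+1)/2 - k_{w-1}$, $k_{w-1} = R_{w-1} = L_{w-2}$, and the recursions $L_{w-1} = L_{w-2} + 2H_{w-2}$, $H_{w-2} = H_{w-3} + 2R_{w-3}$, together with Corollary~\ref{cor:2l-less-l} and Corollary~\ref{cor:LJ}, exactly the toolkit assembled earlier. A secondary subtlety is verifying that the first-weighing count $k_{w-1}$ is actually realizable, i.e.\ $2k_{w-1} \leq N$; since $N$ is being taken up to $k_{w-1} + (L_{w-1}+1)/2$ and $k_{w-1} = L_{w-2} < (L_{w-1}+1)/2$, we indeed have $N \geq 2k_{w-1}$ at the top of the range, and for smaller $N$ one simply uses a smaller first weighing (monotonicity in $N$ makes the ``if'' direction automatic once the extremal case is handled). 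I would also explicitly dispatch $w = 5$ by direct computation, since several of the cited corollaries only kick in for larger indices, mirroring the pattern used in the lemmas just above.
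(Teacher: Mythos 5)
Your proposal is correct and follows essentially the same route as the paper: the same first weighing with $k_{w-1}$ coins per pan (justified by Lemma~\ref{lemma:0kk}), the same second weighing padded with $l$-coins to $H_{w-2}$ per pan with all but one $u$-coin on the scale, the same reduction of the post-balance state to $l':0:0:1$ against the $L_{w-2}$ bound, and the same appeal to Lemma~\ref{lemma:l00u} for necessity. The only cosmetic difference is that you state the upper bound as a direct minimization over the first-weighing size $x$, whereas the paper folds that into its preceding lemmas; the substance is identical.
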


\begin{proof}
We only need to discuss the second weighing after a balance. After the first weighing, the state is $2k_{w-1}:0:0:N-2k_{w-1}$. In the second weighing, we put all but one $u$-coins on the scale. We already showed that we need to add $l$-coins to the scale, so that the total is $H_{w-2}$ on each pan, and distribute the $u$-coins as evenly as possible. We have also shown that the imbalance can be resolved in $w-2$ weighings.

Suppose the weighing is a balance. After it we will have $2k_{w-1} - 2H_{w-2} + 2(N-2k_{w-1})-2:0:0:1$ state. Here we have one $u$-coin and many $l$-coins. We can assign the self-conjugate itinerary to the $u$-coin, which means that we can process this state if and only if the total number of coins does not exceed $L_{w-2}$:
\[ 2k_{w-1} - 2H_{w-2} + 2(N-2k_{w-1})-1 \leq L_{w-2},\]
equivalently
\[ 2k_{w-1} + 2(N-2k_{w-1})-1 \leq L_{w-1}.\]
Equivalently
\[N \leq k_{w-1} + (L_{w-1}+1)/2.\]

We cannot increase the number of $u$-coins by Lemma~\ref{lemma:l00u}.
\end{proof}

We showed that the unknown state is solvable if and only if the number of coins is bounded by the sequence 
\[1,\ 1,\ 3,\ 6,\ 16,\ 39,\ 91,\ 216,\ 499,\ 1144,\ 2651,\ \ldots,\]
which equals $k_{w-1} + (L_{w-1}+1)/2$ starting from $w = 5$.

Now we will present an adaptive strategy for 39 coins. 

\textbf{Example $w=5$}. In the first weighing, we put $18 = k_4$ coins on each pan. If the weighing unbalances, we have a $0:18:18$ state. This case is solvable in three weighings, via Lemma~\ref{lemma:0kk}. If the weighing balances, we have only three coins in the unknown state and 36 coins in the light state after the weighing: $36:0:0:3$ state. 

For the second weighing, we put ten $l$-coins and one $u$-coin on each pan. If the weighing unbalances, we have a $0:11:1:0$ state, which we know how to resolve in three weighings. If the weighing balances, we have a $18:0:0:1$ state, which we also know how to solve in three weighings.

\subsection{Unknown State. Oblivious Strategy}

Let us look at a few small examples. We can leave it to the reader to see that in one weighing we can process up to 1 coin and in 2 weighings---up to 3 coins. 

Here we show an oblivious strategy for 6 coins in three weighings, offering an example of itineraries for 6 coins: LLL, LRO, ORR, RLR, ROL, and OOO. The following Table~\ref{table:3wui2o} matches the itineraries to the outcomes:

\begin{table}[h!]
\centering
\begin{tabular}{| c | c | c | c|} 
 \hline
  & light & heavy & real \\ 
LLL    &      $<>=$    &      $>=<$ & $=<>$\\
LRO     &     $<<=$    &      $>==$ & $=>=$\\
ORR     &     $=><$    &      $=<=$ & $==>$ \\
RLR     &     $>>=$    &      $<=>$ & $=<<$\\
ROL     &     $>=>$    &      $<==$ & $==<$\\
OOO   	&     $===$    &      $===$ & $===$\\
 \hline
\end{tabular}
\caption{Matching of itineraries to outcomes for three weighings}
\label{table:3wui2o}
\end{table}

Note that it is possible to match the three unused outcomes: $><=$, $<=<$, and $=>>$ to itinerary RRL in the light, heavy and real state correspondingly, but we cannot add an extra coin, as we cannot balance all these seven itineraries. If we had one coin which we knew to be real, we could use it to balance out this seventh coin. In any case, we already know that an adaptive strategy for seven coins does not exists: the previous discussion is for purposes of illustration only.

We can also process five coins in three weighings. We can use the same set of itineraries without OOO. For 4 coins we can use itineraries LLL, LRR, RLR, and RRL. We leave it to the reader to check that this set of itineraries works.

We have so far been able to produce an oblivious strategy for every adaptive strategy. This changes with 4 weighings. We know that an adaptive strategy exists for 16 coins. Let us look at oblivious strategies. 

There are 49 possible outcomes. Consider outcome pattern $\neq \neq = \neq$. It must correspond to a coin in the light state that is on the scale at every weighing. The same coin in the heavy state will have outcome pattern $\neq = \neq \neq$ and in the real state: $=\neq \neq =$. There are 8 possible outcomes available for the heavy and light state, and only four for the real state. That means that these 20 outcomes can serve no more than 4 coins. We have 29 outcomes left that can be used for no more than 10 coins. Thus, an oblivious strategy can process no more than 14 coins.

We see that, starting at 4 weighings, adaptive strategies are more powerful than oblivious strategies.

\appendix

\section{Light-real coin, unknown state}

\subsection{Five weighings}\label{app:al5w20c}

Let us show how to build a strategy for finding the light-real coin in the unknown state, with 20 coins total.

Four pattern pairs $\neq = \neq = \neq$ and $= \neq = \neq =$ are matched to itineraries LLLLL, LLRRR, RRLLL, RRRRR.

The next four patten pairs: $\neq = \neq = =$ and $= \neq == \neq$ are matched to itineraries LLLOL, LLROR, RRLOL, RRROR.

Similarly, pairs $\neq = = \neq = $ and $== \neq = \neq$ are matched to itineraries LOLLL, LOLRR, RORLL, RORRR.

The next case is interesting: $\neq = = = \neq$ is matched to $= \neq ===$ or $=== \neq =$. The pattern for the light state is split between two different patterns for the real state. The corresponding itineraries are LLOOL, RROOR, LOOLL and ROORR. They are, again, in conjugate pairs.

We are left with three patterns, $\neq ====$, $==\neq ==$, and $==== \neq$. We split them into three pairs that we will match: $\neq ====$ and $==\neq ==$, $\neq ====$ and $==== \neq$, $==\neq ==$, and $==== \neq$. We obtain the itineraries: LOLOO, ROOOL, and OOROR. These itineraries balance out as well.

Finally, we have the OOOOO itinerary.

These are the itineraries for an oblivious strategy for 20 coins.

We have one self-conjugate itinerary and many conjugate pairs, which means that we can process any number of coins below 20.

\subsection{Six weighings}\label{app:al6w41c}

As before, we pair outcome patterns, then assign balanced itineraries to them. In Table~\ref{table:a6w} we show patterns of outcomes corresponding to the light and real state, then we find the itineraries. The last column counts the number of coins.

\begin{table}[h!]
\centering
\begin{tabular}{| c | c | c | c |} 
 \hline
light & real & balanced itineraries & \\
\hline
$\neq = \neq = \neq = $ & $= \neq = \neq = \neq$ & \begin{tabular}{@{}c@{}}LLLLLL LLLLRR LLRRLL LLRRRR \\ RRLLLL RRLLRR RRRRLL RRRRRR\end{tabular}  & 8\\ 
$\neq = \neq == \neq $ & $= \neq = \neq ==$ &  LLLLOL LLRROR RRLLOL RRRROR & 4\\ 
$\neq = \neq == \neq $ & $= \neq == \neq =$ &  LLLOLL LLRORR RRLOLL RRRORR & 4\\ 
$\neq == \neq = \neq $ & $== \neq = \neq =$ &  LOLLLL LOLRRR RORLLL RORRRR & 4\\ 
$\neq = \neq === $ & $= \neq ===\neq$ & LLLOOL LLROOR RRLOOL RRROOR & 4\\ 
$\neq === \neq = $ & $=== \neq =\neq$ & LOOLLL LOOLRR ROORLL ROORRR & 4\\
$\neq == \neq == $ & $== \neq ==\neq$ & LOLLOL LOLROR RORLOL RORROR & 4\\
$\neq ==== \neq$ & $= \neq ====$ and $==== \neq =$ & LLOOOL RROOOR LOOOLL ROOORR & 4\\
$\neq =====$ & $== \neq ===$ & LOLOOO ROROOO & 2\\
$==\neq ===$ & $===== \neq$ & OOOLOL OOOROR & 2\\
$======$ & $======$ & OOOOOO & 1\\ 
 \hline
\end{tabular}
\caption{Six weighings}
\label{table:a6w}
\end{table}

These are the itineraries for an oblivious strategy for 41 coins.

We have one self-conjugate itinerary, and all other itineraries are in conjugate pairs, which means that we can process any number of coins below 41.

\subsection{Seven weighings}\label{app:al7w82c}

Note that in many cases we build itineraries in such a way that on-scale letters are in groups of two repeated letters. If the number of on-scale letters is odd, then the last on-scale letter matches the previous on-scale letter.

Table~\ref{table:a7w} shows how to process 82 coins in seven weighings. In the table we do not explicitly write itineraries that are easy to construct.

\begin{table}[h!]
\centering
\begin{tabular}{| c | c | c | c |} 
 \hline
light & real & balanced itineraries & \\
\hline
$\neq = \neq = \neq = \neq $ & $= \neq = \neq = \neq =$ &   & 8\\ 
$\neq = \neq = \neq == $ & $= \neq = \neq == \neq$ &   & 8\\ 
$\neq = \neq == \neq =$ & $= \neq == \neq = \neq $ &   & 8\\ 
$\neq == \neq = \neq =$ & $== \neq = \neq = \neq $ &   & 8\\ 
$\neq = \neq === \neq$ & $= \neq === \neq = $ and $= \neq = \neq === $ &   & 8\\ 
$\neq == \neq == \neq $ & $== \neq = \neq == $ and $= \neq == \neq == $ &   & 8\\ 
$\neq === \neq = \neq $ & $=== \neq = \neq = $ and $== \neq == \neq = $ &   & 8\\ 
$\neq = \neq ==== $ & $= \neq ====\neq$ &  & 4\\ 
$\neq == \neq === $ & $== \neq ===\neq$ &  & 4\\
$\neq === \neq == $ & $=== \neq ==\neq$ &  & 4\\
$\neq ==== \neq=$ & $==== \neq = \neq$ &  & 4\\
$\neq ===== \neq$ & $= \neq =====$ and $===== \neq =$&  & 4\\
$\neq ======$ & $====== \neq $  &  & 2\\
$==\neq ====$ & $=== \neq ===$  & OOLLOOO & 1\\
$===\neq ===$ & $==== \neq==$ & OOORROO & 1\\
$==\neq ====$ & $==== \neq==$  & OOROLOO & 1\\
$=======$ & $=======$ & OOOOOOO & 1\\ 
 \hline
\end{tabular}
\caption{Seven weighings}
\label{table:a7w}
\end{table}

\end{document}